\def \noame{\noalign{\medskip}}
\newtheorem{corollary}{Corollary}[section]
\newtheorem{lemma}[corollary]{Lemma}
\newtheorem{proposition}[corollary]{Proposition}
\newtheorem{remark}[corollary]{Remark}
\newtheorem{theorem}[corollary]{Theorem}
\newfont{\sBlackboard}{msbm10 scaled 900}
\newcommand{\mylabel}[1]{\label{#1}
            \ifx\undefined\stillediting
            \else \fbox{$#1$}\fi }
\newcommand{\BE}{\begin{equation}}
\newcommand{\EEQ}{\end{equation}}
\newcommand{\rfb}[1]{\mbox{\rm
   (\ref{#1})}\ifx\undefined\stillediting\else:\fbox{$#1$}\fi}
\newfont{\Blackboard}{msbm10 scaled 1200}
\newfont{\roma}{cmr10 scaled 1200}
\def\CC{\rm \hbox{C\kern-.56em\raise.4ex
         \hbox{$\scriptscriptstyle |$}\kern+0.5 em }}
\def\n{|\kern -.05cm{|}\kern -.05cm{|}}
\newcommand{\mm}    {{\hbox{\hskip 0.5pt}}}
\newcommand{\bluff} {{\hbox{\raise 15pt \hbox{\mm}}}}
\def\section{\@startsection {section}{1}{\z@}{-3.5ex plus -1ex minus
    -.2ex}{2.3ex plus .2ex}{\large\bf}}
\def\be{\begin{equation}}
\def\ee{\end{equation}}
\date{ }
\begin{document}
\thispagestyle{empty}
\title{\bf Newtonian fluid flow in a thin porous medium with non-homogeneous slip boundary conditions}\maketitle

\author{ \center  Mar\'ia ANGUIANO\\
Departamento de An\'alisis Matem\'atico. Facultad de Matem\'aticas.\\
Universidad de Sevilla, P. O. Box 1160, 41080-Sevilla (Spain)\\
anguiano@us.es\\}
\medskip\author{ \center  Francisco Javier SU\'AREZ-GRAU\\ Departamento de Ecuaciones Diferenciales y An\'alisis Num\'erico. Facultad de Matem\'aticas. \\ Universidad de Sevilla, 41012-Sevilla (Spain)\\
 fjsgrau@us.es\\}

\vskip20pt

 \renewcommand{\abstractname} {\bf Abstract}
\begin{abstract} 
We consider the Stokes system in a thin porous medium $\Omega_\varepsilon$ of thickness $\varepsilon$ which is perforated by periodically distributed solid cylinders of size $\varepsilon$. On the boundary of the cylinders we prescribe non-homogeneous slip boundary conditions depending on a parameter $\gamma$. The aim is to give the asymptotic behavior of the velocity and the pressure of the fluid as $\varepsilon$ goes to zero. Using an adaptation of the unfolding method, we give, following the values of $\gamma$, different limit systems.
\end{abstract}
\bigskip\noindent

 {\small \bf AMS classification numbers:} 76A20, 76M50, 35B27.  \\
 
\bigskip\noindent {\small \bf Keywords:} Homogenization; Stokes system; Darcy's law; thin porous medium; Non-homogeneous slip boundary condition.   \newpage

\section {Introduction}\label{S1}
We consider a viscous fluid obeying the Stokes system in a thin porous medium $\Omega_\varepsilon$ of thickness $\varepsilon$ which is perforated by periodically distributed  cylinders (obstacles) of size $\varepsilon$. On the boundary of the obstacles, we prescribe a Robin-type condition depending on a parameter $\gamma \in \mathbb{R}$. The aim of this work is to prove the convergence of the homogenization process when $\varepsilon$ goes to zero, depending on the different values of $\gamma$.

\paragraph*{The domain:} the periodic porous medium is defined by a domain $\omega$ and an associated microstructure, or periodic cell $Y^{\prime}=[-1/2,1/2]^2$, which is made of two complementary parts: the fluid part $Y^{\prime}_{f}$, and the obstacle part $T^{\prime}$ ($Y^{\prime}_f  \bigcup T^{\prime}=Y^\prime$ and $Y^{\prime}_f  \bigcap T^{\prime}=\varnothing$). More precisely, we assume that $\omega$ is a smooth, bounded, connected set in $\mathbb{R}^2$, and that $T^{\prime}$ is an open connected subset of $Y^\prime$ with a smooth boundary $\partial T^\prime$, such that $\overline T^\prime$ is strictly included  in $Y^\prime$.\\

The microscale of a porous medium is a small positive number ${\varepsilon}$. The domain $\omega$ is covered by a regular mesh of square of size ${\varepsilon}$: for $k^{\prime}\in \mathbb{Z}^2$, each cell $Y^{\prime}_{k^{\prime},{\varepsilon}}={\varepsilon}k^{\prime}+{\varepsilon}Y^{\prime}$ is divided in a fluid part $Y^{\prime}_{f_{k^{\prime}},{\varepsilon}}$ and an obstacle part $T^{\prime}_{k^{\prime},{\varepsilon}}$, i.e. is similar to the unit cell $Y^{\prime}$ rescaled to size ${\varepsilon}$. We define $Y=Y^{\prime}\times (0,1)\subset \mathbb{R}^3$, which is divided in a fluid part $Y_{f}=Y'_f\times (0,1)$ and an obstacle part $T=T'\times(0,1)$, and consequently $Y_{k^{\prime},{\varepsilon}}=Y^{\prime}_{k^{\prime},{\varepsilon}}\times (0,1)\subset \mathbb{R}^3$, which is also divided in a fluid part $Y_{f_{k^{\prime}},{\varepsilon}}$ and an obstacle part $T_{{k^{\prime}},{\varepsilon}}$. 

We denote by $\tau(\overline T'_{k',\varepsilon})$ the set of all translated images of $\overline T'_{k',\varepsilon}$. The set $\tau(\overline T'_{k',\varepsilon})$ represents the obstacles in $\mathbb{R}^2$.

\begin{figure}[h!]
\begin{center}
\includegraphics[width=3cm]{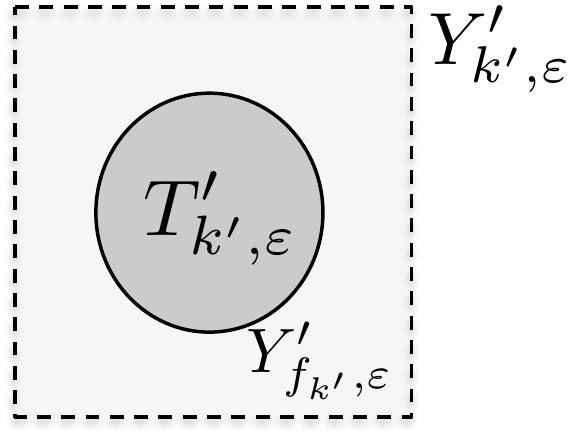} \quad\quad\quad\quad
\includegraphics[width=4cm]{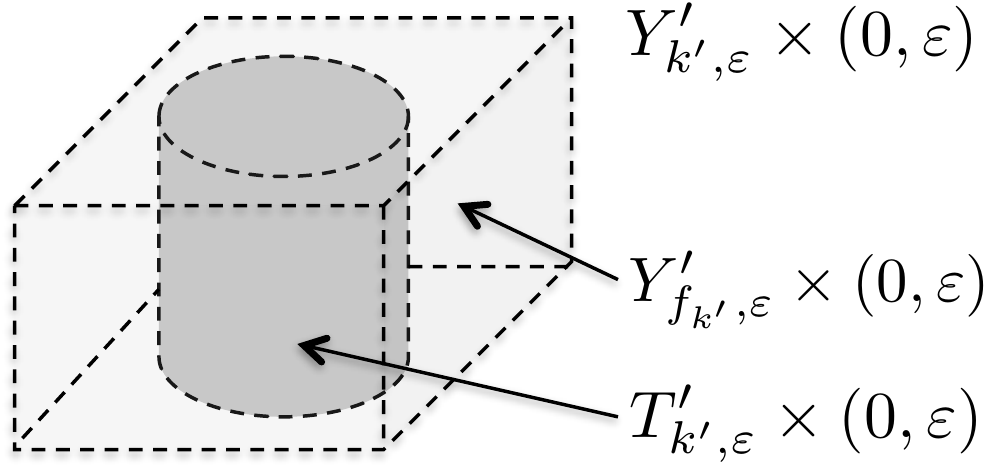}
\end{center}
\caption{Views of a periodic cell in 2D (left) and 3D (right)}
\label{fig:periodic_cell_2D_3D}
\end{figure}

The fluid part of the bottom $\omega_{\varepsilon}\subset \mathbb{R}^2$ of the porous medium is defined by $\omega_{\varepsilon}=\omega\backslash\bigcup_{k^{\prime}\in \mathcal{K}_{\varepsilon}} \overline T^{\prime}_{{k^{\prime}},{\varepsilon}},$ where $\mathcal{K}_{\varepsilon}=\{k^{\prime}\in \mathbb{Z}^2: Y^{\prime}_{k^{\prime}, {\varepsilon}} \cap \omega \neq \emptyset \}$.   The whole fluid part $\Omega_{\varepsilon}\subset \mathbb{R}^3$ in the thin porous medium is defined by (see Figures  \ref{fig:lateral_above}  and \ref{fig:domain})
\begin{equation}\label{Dominio1}
\Omega_{\varepsilon}=\{  (x_1,x_2,x_3)\in \omega_{\varepsilon}\times \mathbb{R}: 0<x_3<\varepsilon \}.
\end{equation}

\begin{figure}[h!]
\begin{center}
\includegraphics[width=7cm]{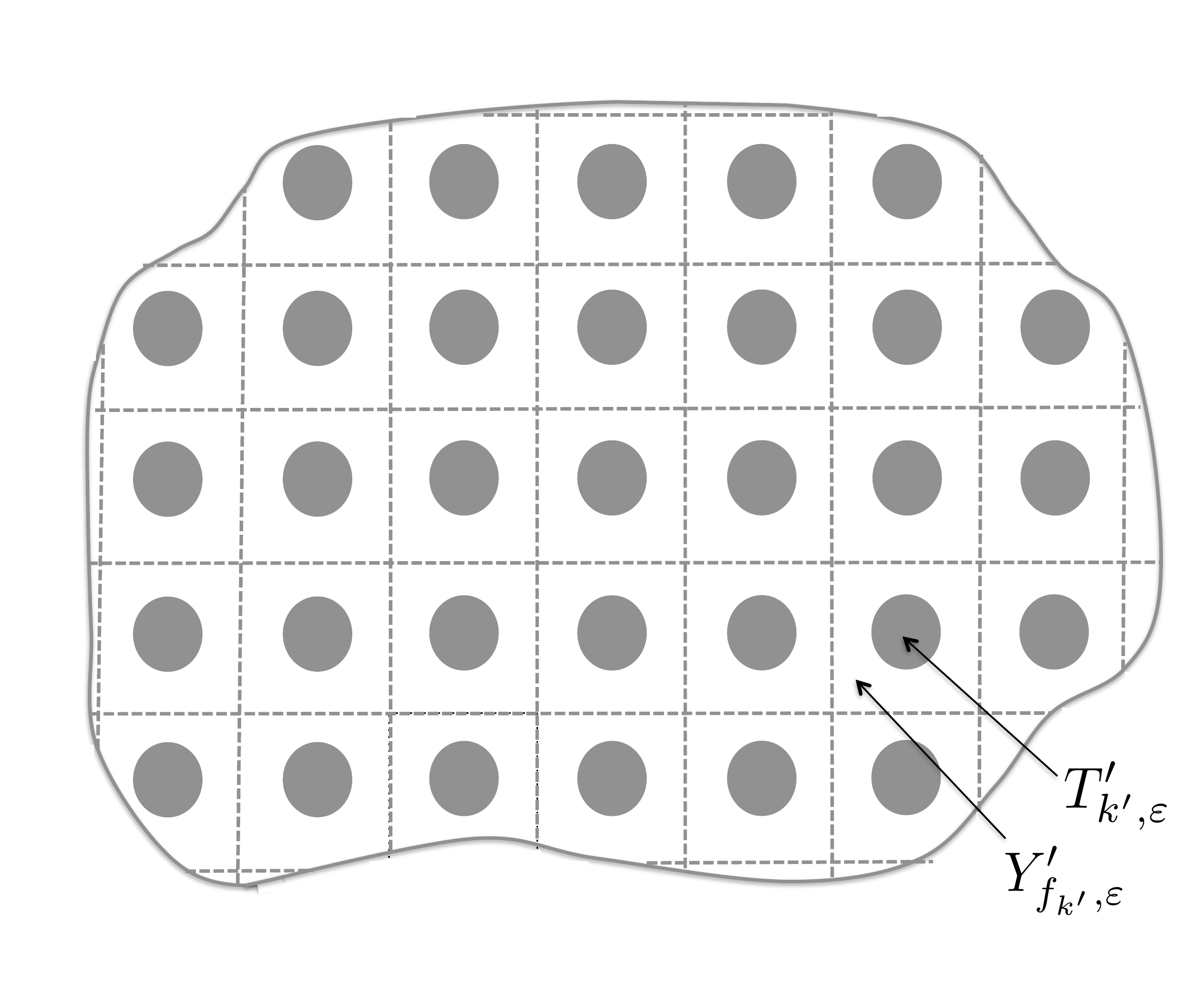}
\end{center}
\caption{View from above}
\label{fig:lateral_above}
\end{figure}

\begin{figure}[h!]
\begin{center}
\includegraphics[width=7cm]{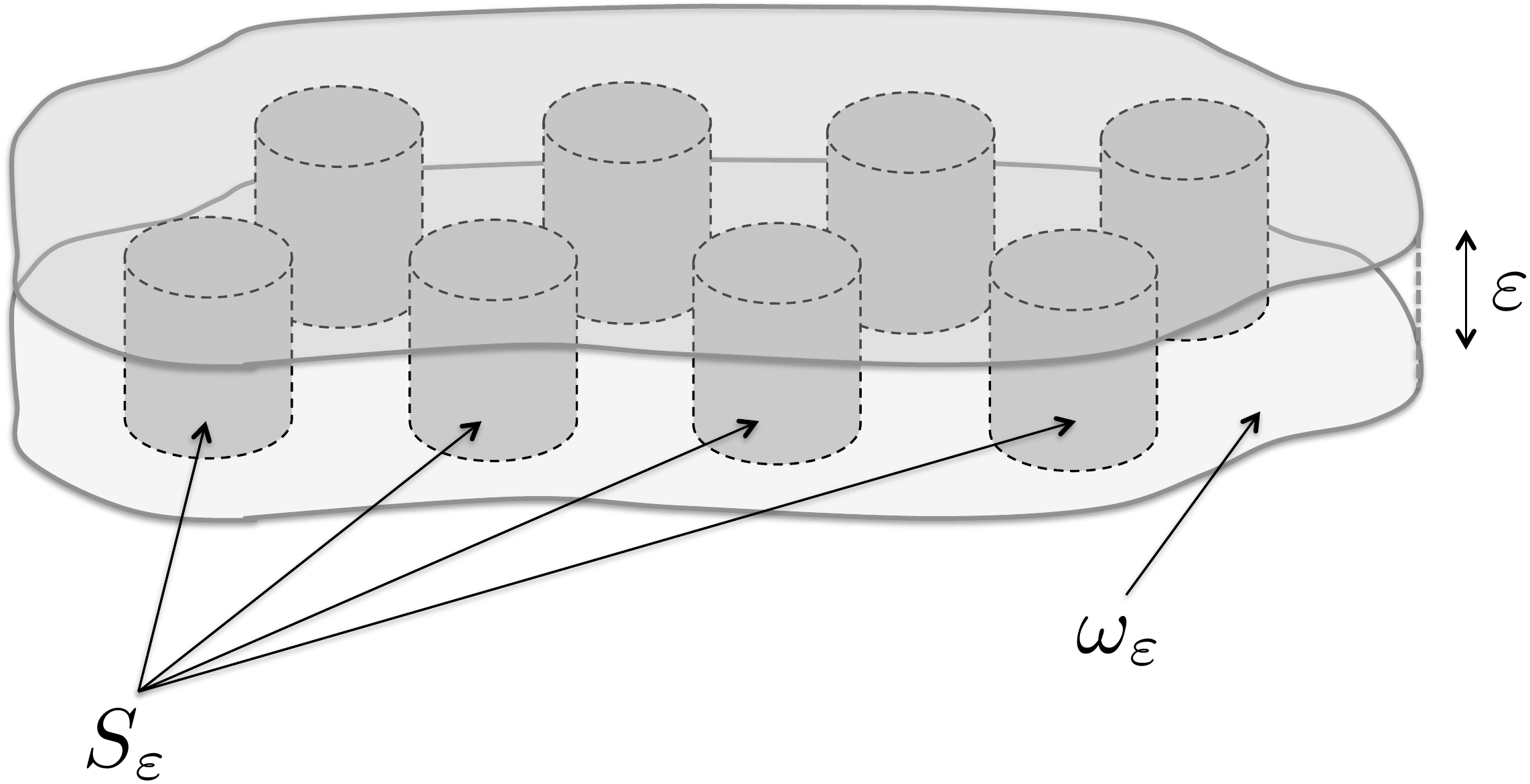}\quad \quad\quad 
\includegraphics[width=7cm]{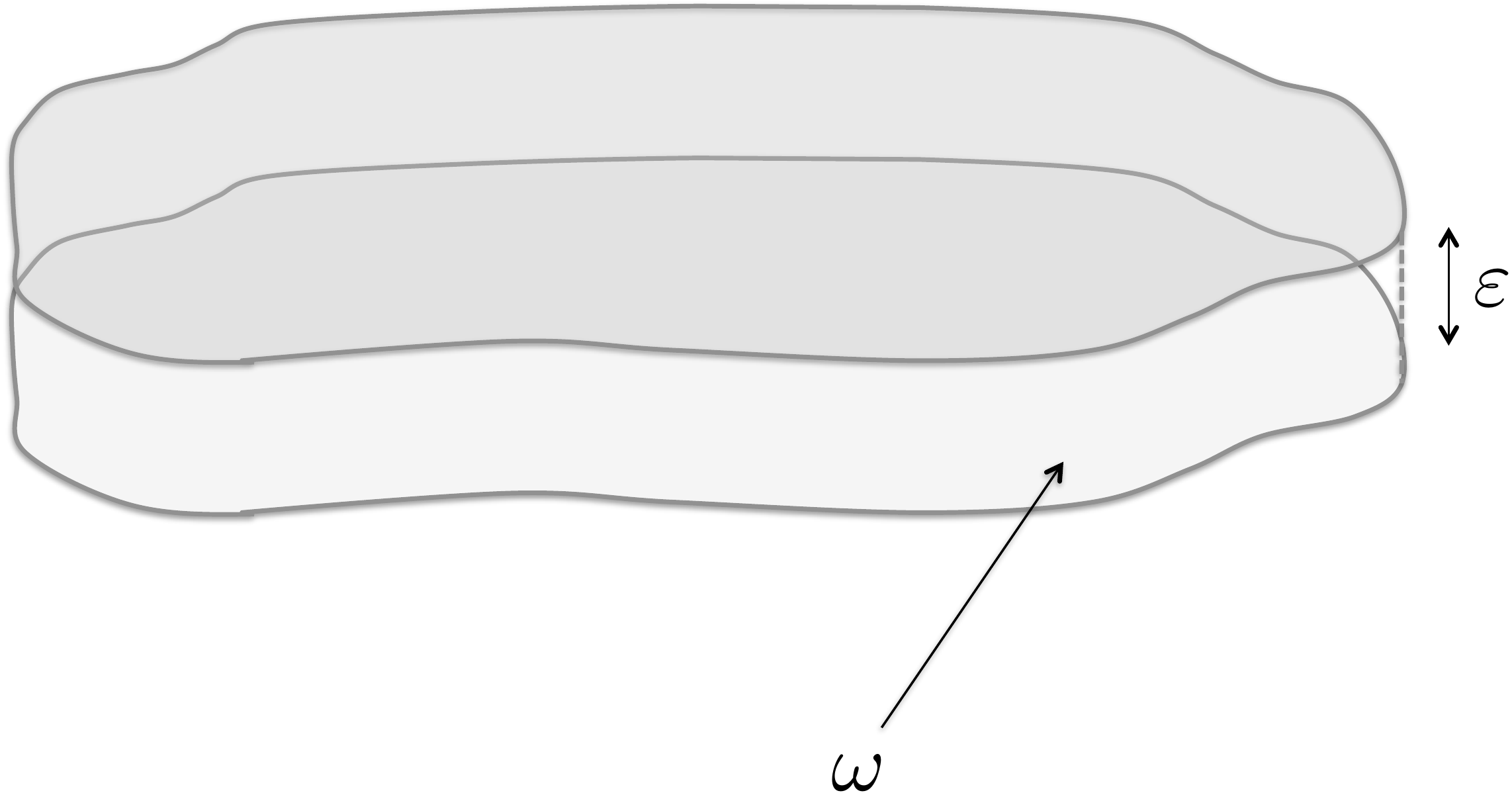}
\end{center}
\caption{Views of the domain $\Omega_\varepsilon$ (left) and $\Lambda_\varepsilon$ (right)}
\label{fig:domain}
\end{figure}

We make the following assumption:
$$\text{The obstacles } \tau(\overline T'_{k',\varepsilon}) \text{ do not intersect the boundary } \partial \omega.$$

We define $T^\varepsilon_{k',\varepsilon}=T'_{k',\varepsilon}\times (0,\varepsilon)$. Denote by $S_\varepsilon$ the set of the obstacles contained in $\Omega_\varepsilon$. Then, $S_\varepsilon$ is a finite union of obstacles, i.e. $$S_\varepsilon=\bigcup_{k^{\prime}\in \mathcal{K}_{\varepsilon}} \overline T^\varepsilon_{k',\varepsilon}.$$

We define 
\begin{equation}\label{OmegaTilde}
\widetilde{\Omega}_{\varepsilon}=\omega_{\varepsilon}\times (0,1), \quad \Omega=\omega\times (0,1), \quad \Lambda_\varepsilon=\omega\times (0,\varepsilon).
\end{equation}

We observe that $\widetilde{\Omega}_{\varepsilon}=\Omega\backslash \bigcup_{k^{\prime}\in \mathcal{K}_{\varepsilon}} \overline T_{{k^{\prime}}, {\varepsilon}},$ and we define $T_\varepsilon=\bigcup_{k^{\prime}\in \mathcal{K}_{\varepsilon}} \overline T_{k^\prime, \varepsilon}$ as the set of the obstacles contained in $\widetilde \Omega_\varepsilon$.

\paragraph*{The problem:} let us consider the following Stokes system in $\Omega_\varepsilon$, with a Dirichlet boundary condition on the exterior boundary $\partial \Lambda_\varepsilon$ and a non-homogeneous slip boundary condition on the cylinders $\partial S_\varepsilon$:
\begin{equation}
\left\{
\begin{array}
[c]{r@{\;}c@{\;}ll}%
\displaystyle -\mu\Delta u_{\varepsilon}+ \nabla p_{\varepsilon} &
= &
f_\varepsilon \quad \text{\ in \ }\Omega_{\varepsilon},\\
\noame
{\rm div}\, u_{\varepsilon} & = & 0 \quad \text{\ in \ }\Omega_{\varepsilon},\\
\noame
u_\varepsilon& = & 0 \quad \text{\ on \ }\partial\Lambda_\varepsilon,\\
\displaystyle -p_\varepsilon\cdot n+\mu{\partial u_{\varepsilon}\over \partial n} + \alpha \varepsilon^\gamma u_\varepsilon&=&g_\varepsilon \quad \text{\,on \ } \partial S_{\varepsilon},
\end{array}
\right. \label{1}%
\end{equation}
where we denote by $u_\varepsilon=(u_{\varepsilon,1},u_{\varepsilon,2},u_{\varepsilon,3})$ the velocity field, $p_\varepsilon$ is the (scalar) pressure, $f_\varepsilon=(f_{\varepsilon,1}(x_1,x_2),f_{\varepsilon,2}(x_1,x_2),0)$ is the field of exterior body force and $g_\varepsilon=(g_{\varepsilon,1}(x_1,x_2),g_{\varepsilon,2}(x_1,x_2),0)$ is the field of exterior surface force. The constants $\alpha$ and $\gamma$ are given, with $\alpha > 0$, $\mu$ is the viscosity and $n$ is the outward normal to $S_\varepsilon$.

This choice of $f$ and $g$ is usual when we deal with thin domains. Since the thickness  of the domain, $\varepsilon$,  is small then vertical component of the forces can be neglected and, moreover the force can be considered independent of the vertical variable.\\

Problem (\ref{1}) models in particular the flow  of an incompressible viscous fluid through a porous medium under the action of an exterior electric field. This system is derived from a physical model well detailed in the literature. As pointed out in  Cioranescu {\it et al.} \cite{Ciora0} and  Sanchez-Palencia \cite{Sanchez-Palencia},  it was observed experimentally in Reuss \cite{Reuss} the following phenomenon: when a electrical field is applied on the boundary of a porous medium in equilibrium, a motion of the fluid appears.  This motion is a consequence of the electrical field only. To describe such a motion, it is usual to consider a modified Darcy's law considering of including an additional term, the gradient of the electrical field, or consider that  the presence of this term is possible only if the electrical charges have a volume distribution.  However, this law contains implicitly a mistake,  because if the solid and fluid parts are both dielectric, such a distribution does not occur, the electrical charges act only on the boundary between the solid and the fluid parts and so they have necessarily a surface distribution.  If such hypothesis is done, we can describe the boundary conditions in terms of the stress tensor $\sigma_\varepsilon$ as follows
$$\sigma_\varepsilon\cdot n+ \alpha \varepsilon^\gamma u_\varepsilon=g_\varepsilon,$$
which is precisely the non-homogeneous slip boundary condition  (\ref{1})$_4$ and means that the stress-vector $\sigma_\varepsilon\cdot n$ induces a slowing effect on the motion of the fluid, expressed by the term $\alpha\varepsilon^\gamma$. Moreover, if there are exterior forces like for instance, an electrical field, then the non-homogeneity of the boundary condition on the $\partial S_\varepsilon$ is expressed in terms of surface charges contained in $g_\varepsilon$. \\

On the other hand, the behavior of the flow of Newtonian fluids through periodic arrays of cylinders has been studied extensively, mainly because of its importance in many applications in heat and mass transfer equipment. However, the literature on Newtonian thin film fluid flows through periodic arrays of cylinders is far less complete, although these problems have now become of great practical relevance because take place in a number of natural and industrial processes. This includes flow during manufacturing of fibre reinforced polymer composites with liquid moulding processes (see Frishfelds {\it et al.} \cite{Frishfelds}, Nordlund and Lundstrom \cite{Nordlund}, Tan and Pillai \cite{Tan}), passive mixing in microfluidic systems (see Jeon \cite{Jeon}), paper making (see Lundstr${\rm \ddot{o}}$m {\it et al.} \cite{Lundstrom}, Singh {\it et al.} \cite{Singh}), and block copolymers self-assemble on nanometer length scales (see Park {\it et al.} \cite{Park}, Albert  and Epps \cite{Albert}, Farrel {\it et al.} \cite{Farrel}).\\

The Stokes problem in a periodically perforated domain with holes of the same size as the periodic has been treated in the literature. More precisely, the case with Dirichlet conditions on the boundary of the holes was studied by Ene and Sanchez-Palencia \cite{EneSanche}, where the model that describes the homogenized medium is a Darcy's law. The case with non-homogeneous slip boundary conditions, depending on a parameter $\gamma\in\mathbb{R}$, was studied by Cioranescu {\it et al.} \cite{Ciora0}, where using the variational method introduced by Tartar \cite{Tartar}, a Darcy-type law, a Brinkmann-type equation or a Stokes-type equation are obtained depending of the values of $\gamma$.  The Stokes and Navier-Stokes equations in a perforated domain with holes of size $r_\varepsilon$, with $r_\varepsilon \ll \varepsilon$ is considered by Allaire \cite{Allaire0}. On the boundary of the holes, the normal component of the velocity is equal to zero and the tangential velocity is proportional to the tangential component of the normal stress. The type of the homogenized model is determined by the size $r_\varepsilon$, i.e. by the geometry of the domain.\\

The earlier results relate to a fixed height domain. For a thin domain, in \cite{Anguiano} Anguiano and Su\'arez-Grau consider an incompressible non-Newtonian Stokes system, in a thin porous medium of thickness $\varepsilon$ that is perforated by periodically distributed solid cylinders of size $a_\varepsilon$, with Dirichlet conditions on the boundary of the cylinders. 

Using a combination of the unfolding method (see Cioranescu {\it et al.} \cite{Ciora} and Cioranescu {\it et al.} \cite{CioraPorous} for perforated domains) applied to the horizontal variables, with a rescaling on the height variable, and using monotonicity arguments to pass to the limit, three different Darcy's laws are obtained rigorously depending on the relation between $a_\varepsilon$ and $\varepsilon$. We remark that an extension of the unfolding method to evolution problems in which the unfolding method is applied to the spatial variables and not on the time variable was introduced in Donato and Yang \cite{D-ZY1} (see also \cite{D-ZY2}).

 The behavior observed when $a_\varepsilon \approx \varepsilon$  in \cite{Anguiano} has motivated the fact of considering non-homogeneous slip conditions on the boundary of the cylinders. In this sense, our aim in the present paper is to consider a Newtonian Stokes system with the non-homogeneous slip boundary condition  (\ref{1})$_4$ in the thin porous medium described in (\ref{Dominio1}) and we prove the convergence of the homogenization process depending on the different values of $\gamma$. To do that, we have to take into account that the normal component of the velocity on the cylinders is different to zero and the extension of the velocity is no longer obvious. If we consider the Stokes system with Dirichlet boundary condition on the obstacles as in \cite{Anguiano}, the velocity can be easily extended by zero in the obstacles, however in this case we need another kind of extension and adapt it to the case of a thin domain. 

One of the main difficulties in the present paper is to treat the surface integrals. The papers mentioned above about problems with non-homogeneous boundary conditions use a generalization (see Cioranescu and Donato \cite{CioraNew}) of the technique introduced by Vanninathan \cite{vanitatis} for the Steklov problem, which transforms the surface integrals into volume integrals. In our opinion, an excellent alternative to this technique was made possible with the development of the unfolding method (see Cioranescu {\it et al.} \cite{Ciora}), which allows to treat easily the surface integrals. In the present paper, we extend some abstract results for thin domains, using an adaptation of the unfolding method, in order to treat all the surface integrals and we obtain directly the corresponding homogenized surface terms. A similar approach is made by Cioranescu {\it et al.} \cite{CioDonZak1} and Zaki \cite{Zaki} with non-homogeneous slip boundary conditions, and Capatina and Ene \cite{Capatina} with non-homogeneous pure slip boundary conditions for a fixed height domain.  

In summary,  we show that the asymptotic behavior of the system (\ref{1}) depends on the values of $\gamma$:
\begin{itemize}
\item[-] for $\gamma<-1$, we obtain a 2D Darcy type law as an homogenized model. The flow is only driven by the pressure.
\item[-] for $-1\leq \gamma<1$, we obtain a 2D Darcy type law but in this case the flow depends on the pressure, the external body force and the mean value of the external surface force.
\item[-] for $\gamma \ge 1$, we obtain a 2D Darcy type law where the flow is only driven by the pressure with a permeability tensor obtained by means of two local 2D Stokes problems posed in the reference cell with  homogeneous Neumann boundary condition on the reference cylinder.
\end{itemize}
We observe that we have obtained the same three regimes as  in Cioranescu {\it et al.} \cite{CioDonZak1} (see Theorems 2.1 and 2.2), and Zaki \cite{Zaki} (see Theorems 14 and 16). Thus, we conclude that the fact of considering the thin domain does not change the critical size of the parameter $\gamma$, but the thickness of the domain introduces a reduction of dimension of the homogenized models and other consequences. More precisely,  in the cases $\gamma<-1$ and   $-1\leq \gamma<1$,  we obtain the same Darcy type law as in \cite{CioDonZak1, Zaki} with the vertical component of the velocity zero as consequence of the thickness of the domain. The main difference appears in the case $\gamma \ge 1$, in which a 3D Brinkmann or Stokes type law were derived in \cite{CioDonZak1, Zaki} while a 2D Darcy type law is obtained in the present paper.

We also remark the differences with the result obtained in \cite{Anguiano} where Dirichlet boundary conditions are pres\-cribed on the cylinders in the case $a_\varepsilon\approx \varepsilon$. In that case, a 2D Darcy law  as an homogenized model with a permeability tensor was obtained through two Stokes local problems in the reference cell with Dirichlet boundary conditions on the reference cylinder.  Here, we obtain three different homogenized model depending on $\gamma$.  The case $\gamma \ge 1$ gives a similar 2D Darcy type law, but the permeability tensor is obtained through two Stokes local problems with homogeneous Neumann boundary conditions. In the cases $\gamma<-1$ and  $-1\leq \gamma<1$, we obtain a 2D Darcy type law without microstructure.

The paper is organized as follows. We introduce some notations in Section \ref{SNot}. In Section \ref{S2}, we formulate the problem and state our main result, which is proved in Section \ref{S3}. The article closes with a few remarks in Section \ref{sec:conclusions}.

\section{Some notations} \label{SNot} Along this paper, the points $x\in\mathbb{R}^3$ will be decomposed as $x=(x^{\prime},x_3)$ with $x^{\prime}\in \mathbb{R}^2$, $x_3\in \mathbb{R}$. We also use the notation $x^{\prime}$ to denote a generic vector of $\mathbb{R}^2$.\\

 In order to apply the unfolding method, we need the following notation:  for $k'\in \mathbb{Z}^2$, we define $\kappa:\mathbb{R}^2\to \mathbb{Z}^2$ by 
\begin{equation}\label{kappa_fun}
\kappa(x^\prime)=k^\prime\iff x'\in Y^{\prime}_{k^{\prime},1}\,.
\end{equation}
Remark that $\kappa$ is well defined up to a set of zero measure in $\mathbb{R}^2$, which is given by $\cup_{k^\prime\in \mathbb{Z}^2}\partial Y^{\prime}_{k^{\prime},1}$. Moreover, for every $\varepsilon>0$, we have 
$$\kappa\left({x^\prime\over \varepsilon}\right)=k^\prime \iff x^\prime\in Y^{\prime}_{k^{\prime}, \varepsilon}.$$
For a vectorial function $v=(v',v_3)$ and a scalar function $w$, we introduce the operators: $D_{\varepsilon}$, $\nabla_\varepsilon$ and ${\rm div}_{\varepsilon}$, by
\begin{equation*}
(D_{\varepsilon}v)_{i,j}=\partial_{x_j}v_i\text{\ for \ }i=1,2,3,\ j=1,2,\quad \quad (D_{\varepsilon}v)_{i,3}=\frac{1}{\varepsilon}\partial_{y_3}v_i\text{\ for \ }i=1,2,3,
\end{equation*}
\begin{equation*}
\nabla_{\varepsilon}w=(\nabla_{x^{\prime}}w,\frac{1}{\varepsilon}\partial_{y_3}w)^t, \quad \quad {\rm div}_{\varepsilon}v={\rm div}_{x^{\prime}}v^{\prime}+\frac{1}{\varepsilon}\partial_{y_3}v_3.
\end{equation*}

We denote by $|\mathcal{O}|$ the Lebesgue measure of $|\mathcal{O}|$ (3-dimensional if $\mathcal{O}$ is a 3-dimensional open set, 2-dimensional of $\mathcal{O}$ is a curve).

For every bounded set $\mathcal{O}$ and if $\varphi\in L^1(\mathcal{O})$, we define the average of $\varphi$ on $\mathcal{O}$ by
\begin{equation}\label{average}
\mathcal{M}_{\mathcal{O}}[\varphi]={1\over |\mathcal{O}|}\int_\mathcal{O}\varphi\,dx\,.
\end{equation}
Similarly, for every compact set $K$ of $Y$, if $\varphi\in L^1(\partial K)$ then 
$$\mathcal{M}_{\partial K}[\varphi]={1\over |{\partial K}|}\int_{\partial K}\varphi\,d\sigma\,,$$
is the average of $\varphi$ over $\partial K$.

We denote by $L^2_{\sharp} (Y)$, $H^1_{\sharp}(Y)$, the functional spaces
$$\begin{array}{l}\displaystyle
L^2_{\sharp} (Y)=\Big\{ v\in L^2_{loc}(Y)\, : \,\int_{Y}
|v|^2 dy <+\infty,\,  v(y'+k',y_3)=v(y)\hskip 0.2cm \forall k'\in\mathbb{Z}^2,\, \mbox{a.e. }y\in Y
\Big\},\end{array}
$$
and
$$\begin{array}{l}\displaystyle
H^1_{\sharp} (Y)= \Big\{ v \in H^1_{loc}(Y)\cap L^2_{\sharp} (Y):  \int_{Y}
|\nabla_{y}v|^2 dy <+\infty\Big\}.
\end{array}
$$

We denote by $:$ the full contraction of two matrices, i.e. for $A=(a_{i,j})_{1\leq i,j\leq 2}$ and $B=(b_{i,j})_{1\leq i,j\leq 2}$, we have $A:B=\sum_{i,j=1}^2 a_{ij}b_{ij}$.\\

Finally, we denote by $O_\varepsilon$ a generic real sequence, which tends to zero with $\varepsilon$ and can change from line to line, and by $C$ a generic positive constant which also can change from line to line.

\section{Main result}\label{S2}
In this section we describe the asymptotic behavior of a viscous fluid obeying (\ref{1}) in the geometry $\Omega_{\varepsilon}$ described in (\ref{Dominio1}). The proof of the corresponding results will be given in the next section.

\paragraph*{The variational formulation:} let us introduce the spaces $$H_{\varepsilon}=\left\{ \varphi\in H^1(\Omega_{\varepsilon})\ :\ \varphi=0 \text{ on } \partial \Lambda_\varepsilon\right\},\quad H^3_{\varepsilon}=\left\{ \varphi\in H^1(\Omega_{\varepsilon})^3\ :\ \varphi=0 \text{ on } \partial \Lambda_\varepsilon\right\},$$
and
$$\widetilde H_{\varepsilon}=\left\{ \tilde \varphi\in H^1(\widetilde \Omega_{\varepsilon})\ :\ \tilde \varphi=0 \text{ on } \partial  \Omega\right\},\quad \widetilde H^3_{\varepsilon}=\left\{ \tilde \varphi\in H^1(\widetilde \Omega_{\varepsilon})^3\ :\ \tilde \varphi=0 \text{ on } \partial  \Omega\right\}.$$

Then, the variational formulation of system (\ref{1}) is the following one:
\begin{equation}\label{fv1}
\left\{\begin{array}{l}
\displaystyle
\mu \int_{\Omega_\varepsilon}\!\!Du_\varepsilon:D \varphi\, dx-\int_{\Omega_\varepsilon}\!\!p_\varepsilon\,{\rm div}\,\varphi\, dx+\alpha \varepsilon^{\gamma}\int_{\partial S_\varepsilon}\!\!u_\varepsilon \cdot \varphi\, d\sigma(x) =\int_{\Omega_\varepsilon}\!\!f'_\varepsilon\cdot \varphi'\,dx+\int_{\partial S_\varepsilon}\!\!g'_\varepsilon \cdot \varphi'\, d\sigma(x),\ \forall \,\varphi\in H_\varepsilon^3,\\
\\ \displaystyle \int_{\Omega_\varepsilon} u_\varepsilon \cdot \nabla \psi\,dx=\int_{\partial  S_\varepsilon}( u_\varepsilon \cdot n) \psi\, d\sigma(x),\  \forall \, \psi\in H_\varepsilon.
\end{array}\right.
\end{equation}

Assume that $f_\varepsilon(x)=(f'_\varepsilon(x'),0)\in L^2( \omega)^3$ and $g_\varepsilon(x)= g(x'/\varepsilon)$, where $ g$ is a $Y'$-periodic function in $L^2(\partial  T)^2$. Under these assumptions, it is well known that (\ref{fv1}) has a unique solution $(u_{\varepsilon},p_{\varepsilon})\in H_\varepsilon^3\times L^2(\Omega_{\varepsilon})$  (see Theorem 4.1 and Remark 4.1 in \cite{Conca} for more details).\\

Our aim is to study the asymptotic behavior of $u_{\varepsilon}$ and $p_{\varepsilon}$ when $\varepsilon$ tends to zero. For this purpose, we use the dilatation in the variable $x_3$, i.e.
\begin{equation}\label{dilatacion}
y_3=\frac{x_3}{\varepsilon},
\end{equation}
in order to have the functions defined in the open set with fixed height $\widetilde \Omega_\varepsilon$ defined by (\ref{OmegaTilde}).

Namely, we define $\tilde{u}_{\varepsilon}\in \widetilde H_\varepsilon^3$, $\tilde{p}_{\varepsilon}\in L^2(\widetilde{\Omega}_{\varepsilon})$ by $$\tilde{u}_{\varepsilon}(x^{\prime},y_3)=u_{\varepsilon}(x^{\prime},\varepsilon y_3),\text{\ \ }\tilde{p}_{\varepsilon}(x^{\prime},y_3)=p_{\varepsilon}(x^{\prime},\varepsilon y_3), \text{\ \ } a.e.\text{\ } (x^{\prime},y_3)\in \widetilde{\Omega}_{\varepsilon}.$$
Using the transformation (\ref{dilatacion}), the system (\ref{1}) can be rewritten as
\begin{equation}\label{2}
\left\{
\begin{array}
[c]{r@{\;}c@{\;}ll}%
\displaystyle -\mu\Delta_{x^{\prime}} \tilde{u}_{\varepsilon}-\varepsilon^{-2}\mu\partial^2_{y_3}\tilde{u}_{\varepsilon}+ \nabla_{x^{\prime}} \tilde{p}_{\varepsilon}+\varepsilon^{-1}\partial_{y_3}\tilde{p}_{\varepsilon}e_3 &
= &
f_\varepsilon \text{\ in \ }\widetilde{\Omega}_{\varepsilon},\\
\noame
\displaystyle {\rm div}_{x^{\prime}}\, \tilde{u}^{\prime}_{\varepsilon}+ \varepsilon^{-1}\partial_{y_3}\tilde{u}_{\varepsilon,3} & = & 0 \text{\ in \ }\widetilde{\Omega}_{\varepsilon},\\
\noame
\tilde u_\varepsilon&=& 0 \text{\ on \ }\partial{\Omega},
\end{array}
\right. %
\end{equation}
with the non-homogeneous slip boundary condition, 
\begin{equation}\label{CF-2}
-\tilde p_\varepsilon\cdot n+\mu {\partial \tilde u_{\varepsilon}\over \partial n} + \alpha \varepsilon^\gamma \tilde u_\varepsilon= g_\varepsilon \text{\ on \ } \partial T_{\varepsilon}, 
\end{equation}
where $e_3=(0,0,1)^t$.

Taking in (\ref{fv1}) as test function $\tilde \varphi\left(x',x_3 / \varepsilon\right)$ with $\tilde \varphi \in \widetilde H_\varepsilon^3$ and $\tilde \psi\left(x',x_3/ \varepsilon\right)$ with $\tilde \psi \in \widetilde H_\varepsilon$, applying the change of variables (\ref{dilatacion}) and taking into account that $d\sigma(x)=\varepsilon d\sigma(x')dy_3$, the variational formulation of system (\ref{2})-(\ref{CF-2}) is then the following one:

\begin{equation}\label{fv2}
\left\{\begin{array}{l}
\displaystyle
\mu \int_{\widetilde \Omega_\varepsilon}D_\varepsilon \tilde u_\varepsilon:D_\varepsilon \tilde \varphi\, dx'dy_3-\int_{\widetilde \Omega_\varepsilon}\tilde p_\varepsilon\,{\rm div}_\varepsilon\,\tilde \varphi\, dx'dy_3+\alpha \varepsilon^{\gamma}\int_{\partial  T_\varepsilon}\tilde u_\varepsilon \cdot \tilde \varphi\, d\sigma(x')dy_3\\
\noame
\displaystyle \quad=\int_{\widetilde \Omega_\varepsilon}f'_\varepsilon\cdot \tilde \varphi'\,dx'dy_3+\int_{\partial  T_\varepsilon} g'_\varepsilon \cdot \tilde\varphi'\, d\sigma(x')dy_3,\quad \forall \,\tilde \varphi\in \widetilde H_\varepsilon^3,\\
\noame
\displaystyle
\int_{\widetilde \Omega_\varepsilon}\tilde u_\varepsilon\cdot \nabla_{\varepsilon}\tilde \psi\,dx'dy_3=\int_{\partial  T_\varepsilon}(\tilde u_\varepsilon \cdot n)\tilde \psi\, d\sigma(x')dy_3,\quad \forall \,\tilde \psi\in \widetilde H_\varepsilon.
\\
\end{array}\right.
\end{equation}

In the sequel, we assume that the data $f'_\varepsilon$ satisfies that there exists $f'\in L^2(\omega)^2$ such that
\begin{equation}\label{Hf}
\varepsilon f'_\varepsilon \rightharpoonup f' \quad \text{ weakly in }L^2(\omega)^2.
\end{equation}
Observe that, due to the periodicity of the obstacles, if $\displaystyle f'_\varepsilon={f'\over \varepsilon}$ where $f'\in L^2(\omega)^2$, then 
$$\chi_{\Omega_\varepsilon}f'_\varepsilon=\varepsilon f'_\varepsilon\rightharpoonup \theta f'\quad\hbox{in }L^2(\omega)^2,$$
assuming $\varepsilon f'_\varepsilon$ extended by zero outside of $\omega_\varepsilon$, where  $\theta$ denotes the proportion of the material in the cell $Y'$  given by 
$$\theta:={|Y'_f|\over |Y'|}.$$
We also define the constant $\displaystyle \mu_1:={|\partial T'|\over |Y'|}$.

\paragraph*{Main result:} our goal then is to describe the asymptotic behavior of this new sequence $(\tilde{u}_{\varepsilon}$, $\tilde{p}_{\varepsilon})$ when $\varepsilon$ tends to zero. The sequence of solutions $(\tilde{u}_{\varepsilon}$, $\tilde{p}_{\varepsilon})\in \widetilde H_{\varepsilon} \times  L^2(\widetilde{\Omega}_{\varepsilon})$ is not defined in a fixed domain independent of $\varepsilon$ but rather in a varying set $\widetilde{\Omega}_{\varepsilon}$. In order to pass the limit if $\varepsilon$ tends to zero, convergences in fixed Sobolev spaces (defined in $\Omega$) are used which requires first that $(\tilde{u}_{\varepsilon}$, $\tilde{p}_{\varepsilon})$ be extended to the whole domain $\Omega$. For the velocity,  we will denote by $\tilde U_\varepsilon$ the zero extension of $\tilde u_\varepsilon$ to the whole $\Omega$, and for the pressure we will denote  by $\tilde P_\varepsilon$ the zero extension of $\tilde p_\varepsilon$ to the whole $\Omega$.

Our main result referred to the asymptotic behavior of the solution of (\ref{2})-(\ref{CF-2}) is given by the following theorem.  
\begin{theorem}\label{MainTheorem} 
Let $(\tilde u_\varepsilon, \tilde p_\varepsilon)$ be the solution of (\ref{2})-(\ref{CF-2}). Then there exists an extension operator $\widetilde \varPi_\varepsilon\in \mathcal{L}(\widetilde H_\varepsilon^3;H_0^1(\Omega)^3)$ such that
\begin{itemize}
\item[i)]  if $\gamma<-1$, then 
 \begin{equation*}
\tilde \varPi_\varepsilon \tilde u_\varepsilon\rightharpoonup 0 \text{\ in \ }H^1_0(\Omega)^3.
\end{equation*}
Moreover, $(\varepsilon^{-1}\tilde U_\varepsilon, \varepsilon^{-\gamma}\tilde P_\varepsilon)$ is bounded in $H^1(0,1;L^2(\omega)^3)\times L^2(\omega)$ and any weak-limit point $(\tilde u(x^{\prime},y_3), \tilde p(x^{\prime}))$ of this sequence satisfies $\tilde u'=0$ on $y_3=\{0,1\}$, $\tilde u_3=0$ and the following Darcy type law:
\begin{equation}\label{effective1}
\left\{\begin{array}{l}
  \displaystyle \tilde v'(x')= -{\theta\over \alpha\,\mu_1}\nabla_{x'}\tilde p(x')\\
   \noame \displaystyle \tilde v_3(x')=0,
   \end{array}\right.
   \quad  \text{\ in \ } \omega\,,
\end{equation}
where $\tilde v(x')=\int_0^1\tilde u(x',y_3)dy_3$,
\item[ii) ] if $-1\leq \gamma<1$, then 
\begin{equation*}
\varepsilon^{{\gamma+1\over 2}}\tilde \varPi_\varepsilon \tilde u_\varepsilon\rightharpoonup 0\text{\ in \ }H^1_0(\Omega)^3.
\end{equation*}
Moreover, $(\varepsilon^{\gamma}\tilde U_\varepsilon, \varepsilon \tilde P_\varepsilon)$ is bounded in $H^1(0,1;L^2(\omega)^3)\times L^2(\omega)$ and any weak-limit point $(\tilde u(x^{\prime},y_3), \tilde p(x^{\prime}))$ of this sequence satisfies $\tilde u'=0$ on $y_3=\{0,1\}$,  $\tilde{u}_3=0$ and the following Darcy type law:
\begin{equation}\label{effective2}
\left\{\begin{array}{l}
  \displaystyle \tilde v'(x')= {\theta\over \alpha\,\mu_1}\left(f'-\nabla_{x'}\tilde p(x')+\mu_1\mathcal{M}_{\partial T'}[g']\right)\\
   \noame \displaystyle \tilde v_3(x')=0,
   \end{array}\right.
   \quad  \text{\ in \ } \omega\,,
\end{equation}
where $\tilde v(x')=\int_0^1\tilde u(x',y_3)dy_3$, 
\item[iii) ] if $\gamma \ge 1$, then 
\begin{equation*}
\varepsilon \tilde \varPi_\varepsilon \tilde u_\varepsilon\rightharpoonup 0\text{\ in \ }H^1_0(\Omega)^3.
\end{equation*}
Moreover, $(\tilde U_\varepsilon, \varepsilon^2 \tilde P_\varepsilon)$ is bounded in $H^1(0,1;L^2(\omega)^3)\times L^2(\omega)$ and any weak-limit point $(\tilde u(x^{\prime},y_3), \tilde p(x^{\prime}))$ of this sequence satisfies $\tilde u=0$ on $y_3=\{0,1\}$ and the following Darcy type law:
\begin{equation}\label{effective3}
 \left\{\begin{array}{l}
  \displaystyle\tilde v'(x')= -{\theta\over \mu}A\nabla_{x'}\tilde p(x')\\
   \noame \displaystyle \tilde v_3(x')=0,
     \end{array}\right.
   \quad  \text{\ in \ } \omega\,,
\end{equation}
where $\tilde v(x')=\int_0^1\tilde u(x',y_3)dy_3$, and the symmetric and positive permeability tensor $A\in \mathbb{R}^{2\times 2}$ is defined by its entries
$$A_{ij}={1\over |Y'_f|}\int_{Y'_f}Dw^i(y'):D w^j(y')\,dy,\quad i,j=1,2.$$
For $i=1,2$, $w^i(y')$, denote the unique solutions in $H^1_{\sharp}(Y'_f)^2$ of the local Stokes problems with homogeneous Neumann boundary conditions in 2D:
\begin{equation}\label{local_problem_3}\left\{\begin{array}{rl}
\displaystyle
-\Delta_{y'} w^i+\nabla_{y'} q^i=e_i&\quad\hbox{in }Y'_f\\
\noame\displaystyle
{\rm div}_{y'}\hat w^i=0&\quad\hbox{in }Y'_f\\
\noame\displaystyle
{\partial w^i\over \partial n}=0&\quad\hbox{on }\partial T',\\
\noame\displaystyle
w^i,q^i\quad Y'-{\rm periodic},\\
\noame\displaystyle
\mathcal{M}_{Y_f}[w^i]=0\,.
\end{array}\right.
\end{equation}
\end{itemize}
\end{theorem}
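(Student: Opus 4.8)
The plan is to follow the classical homogenization scheme in the form adapted to thin periodic porous media: (1) establish uniform \emph{a priori} estimates for $(\tilde u_\varepsilon,\tilde p_\varepsilon)$ in the rescaled domain $\widetilde\Omega_\varepsilon$; (2) construct the extension operator $\widetilde\varPi_\varepsilon$ for the velocity and a suitable extension for the pressure to the fixed cylinder $\Omega$; (3) use an adaptation of the periodic unfolding operator (acting on the two horizontal variables, with the vertical variable left untouched) to pass to the limit in the unfolded variational formulation, handling in particular the surface integrals on $\partial T_\varepsilon$; (4) identify the limit problem and, depending on $\gamma$, the limit equation; and (5) in the case $\gamma\ge 1$ recognise the cell problems (\ref{local_problem_3}) and show the resulting permeability tensor $A$ is symmetric positive definite, then eliminate the microstructure in the other two regimes.

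\textbf{Step 1 (energy estimates).}
I would test (\ref{fv2}) with $\tilde\varphi=\tilde u_\varepsilon$. Using the Poincar\'e inequality in the thin direction (which in the dilated variables costs a factor $\varepsilon$, i.e. $\|\tilde\varphi\|_{L^2}\le C\varepsilon\|D_\varepsilon\tilde\varphi\|_{L^2}$ because $\tilde\varphi=0$ on $y_3\in\{0,1\}$), a trace inequality to control $\int_{\partial T_\varepsilon}|\tilde u_\varepsilon|^2$ by $\|D_\varepsilon\tilde u_\varepsilon\|_{L^2}^2$ up to $\varepsilon$-powers (here the unfolding of the surface term is what makes the constant explicit and uniform), together with hypothesis (\ref{Hf}) on $f'_\varepsilon$ and the scaling $g_\varepsilon=g(x'/\varepsilon)$, gives the key bound $\|D_\varepsilon\tilde u_\varepsilon\|_{L^2(\widetilde\Omega_\varepsilon)}\le C\,\varepsilon^{\beta(\gamma)}$ for the appropriate exponent $\beta(\gamma)$, from which all the stated boundedness assertions for $\tilde U_\varepsilon$ follow (the three cases correspond exactly to which term — the $\alpha\varepsilon^\gamma$ surface term, the body force, or the viscous term — dominates). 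The pressure estimate is obtained, as usual, by constructing a test function $\tilde\varphi$ with prescribed divergence (a Bogovskii-type / restriction operator compatible with the perforation and the thin direction), which yields the matching bound on $\tilde P_\varepsilon$ in $L^2(\omega)$ after one shows the $y_3$-independence of the limit pressure from the $\varepsilon^{-1}\partial_{y_3}\tilde p_\varepsilon e_3$ term in (\ref{2}).

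\textbf{Step 2 (extension) and Step 3 (unfolding and passage to the limit).}
The velocity cannot be extended by zero into the obstacles because $\tilde u_\varepsilon\cdot n\ne 0$ on $\partial T_\varepsilon$; instead I would build $\widetilde\varPi_\varepsilon$ cell by cell using the classical restriction/extension operator of Tartar--Cioranescu--Saint Jean Paulin inside each fluid cell $Y_{f_{k'},\varepsilon}$, rescaled in $y_3$, with operator norm bounded independently of $\varepsilon$; this gives $\widetilde\varPi_\varepsilon\in\mathcal L(\widetilde H_\varepsilon^3;H_0^1(\Omega)^3)$ and the stated weak convergences of $\varepsilon^{\beta(\gamma)}\widetilde\varPi_\varepsilon\tilde u_\varepsilon$ to $0$. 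Then I introduce the unfolding operator $\mathcal T_\varepsilon$ in the horizontal variables and its boundary analogue $\mathcal T_\varepsilon^{b}$ on $\partial T_\varepsilon$; applying $\mathcal T_\varepsilon$ to the rescaled gradients and using the divergence-free condition (\ref{fv2})$_2$ one identifies two-scale limits of the form $\tilde u(x',y_3)$ for the (appropriately rescaled) velocity and a corrector term, and $\tilde p(x')$ for the pressure (independence of $y_3$ and of $y'$ both come out of the limit of the momentum equation tested against oscillating functions). Choosing oscillating test functions $\tilde\varphi(x',x'/\varepsilon,y_3)$ of the appropriate scale and passing to the limit in the unfolded (\ref{fv2}) produces the limit variational problem; the surface term $\alpha\varepsilon^\gamma\int_{\partial T_\varepsilon}\tilde u_\varepsilon\cdot\tilde\varphi$ converges, after unfolding, to $\alpha\,\mu_1\int_{\omega}\int_0^1\tilde u\cdot\tilde\varphi$ (this is where the constant $\mu_1=|\partial T'|/|Y'|$ and the mean $\mathcal M_{\partial T'}[g']$ enter), while the body-force term contributes $\theta f'$ via (\ref{Hf}).

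\textbf{Step 4--5 (identification).}
For $\gamma<-1$ the surface term blows up fastest, forcing the leading velocity profile to be parabolic in $y_3$ and killing $f'$ and $g'$ at leading order, so integrating in $y_3$ yields (\ref{effective1}). For $-1\le\gamma<1$ the same parabolic-in-$y_3$ structure survives but $f'$ and $\mathcal M_{\partial T'}[g']$ now appear at the same order, giving (\ref{effective2}). For $\gamma\ge 1$ the viscous term in the horizontal variable is of the same order as the pressure gradient, so the limit is a genuine cell problem: plugging the ansatz $\tilde u=\sum_i w^i(y')\,(\text{data})_i$ into the unfolded momentum equation and separating variables produces exactly (\ref{local_problem_3}) with homogeneous Neumann conditions on $\partial T'$ (the Neumann condition is the two-scale limit of the slip condition, which at this scale decouples from the $\alpha\varepsilon^\gamma$ term), the zero-mean normalisation fixing $w^i$ uniquely in $H^1_\sharp(Y'_f)^2$; symmetry and positive-definiteness of $A_{ij}=\frac{1}{|Y'_f|}\int_{Y'_f}Dw^i:Dw^j$ follow by testing the $i$-th problem with $w^j$ (Green's formula, using periodicity and the Neumann condition to drop boundary terms) and by testing with $w^i$ itself. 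In all three cases the vertical momentum equation forces $\tilde v_3\equiv 0$, and a standard density/uniqueness argument (or direct verification that the limit problem has a unique solution) upgrades "any weak-limit point" to the full convergence.

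\textbf{Main obstacle.}
The hard part is Steps 1--3 for the surface integrals in the genuinely thin, perforated geometry: one must set up the boundary unfolding operator $\mathcal T_\varepsilon^b$ on $\partial T_\varepsilon\subset\widetilde\Omega_\varepsilon$ compatibly with the $y_3$-dilation, prove the abstract convergence $\mathcal T_\varepsilon^b(\tilde u_\varepsilon)\to\tilde u$ with the right scaling, and in parallel construct the $\varepsilon$-uniform velocity extension $\widetilde\varPi_\varepsilon$ that does \emph{not} preserve the divergence exactly but only up to terms controlled by $\tilde u_\varepsilon\cdot n$ on $\partial T_\varepsilon$ — tracking these defect terms through the limit is the delicate point, and it is precisely the novelty relative to \cite{Anguiano} (Dirichlet case, where zero extension is available) and to \cite{CioDonZak1,Zaki} (fixed height, no $\varepsilon$-dilation).
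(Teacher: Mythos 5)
Your overall scheme tracks the paper's quite closely: a priori estimates via testing with $\tilde u_\varepsilon$ and a divergence-equation construction for the pressure, the Cioranescu--Saint Jean Paulin type extension $\widetilde\varPi_\varepsilon$ into the cylinders, an unfolding operator in the horizontal variables together with its boundary analogue on $\partial T_\varepsilon$, compactness, and passage to the limit with oscillating test functions; the treatment of case $\gamma\ge1$ and the symmetry/positivity of $A$ is also essentially as in the paper. However, there is a genuine gap in the identification step for cases i) and ii).

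You claim that for $\gamma<-1$ (and implicitly $-1\le\gamma<1$) the surface term ``forces the leading velocity profile to be parabolic in $y_3$'' and that ``integrating in $y_3$'' then yields the Darcy law. This is the wrong mechanism. The cylinders have height of the same order $\varepsilon$ as their lateral size, so there is no lubrication-type dominant balance between vertical viscous stress and the pressure gradient, and the limit $\hat u(x',y)$ in the unfolded picture has no parabolic structure (for $-1\le\gamma<1$ the paper even shows that $\hat u$ is \emph{constant} in $y$ because $D_y\hat u\rightharpoonup 0$). If one did insert a Poiseuille profile and integrate in $y_3$, one would obtain a factor $1/12$ or similar, which is not in (\ref{effective1})--(\ref{effective2}).

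What actually closes the Darcy law is the step you omit. After passing to the limit in (\ref{fv2}), the leading-order relation involves the \emph{surface} mean $\mathcal{M}_{\partial T'}[\hat u']$, namely $\nabla_{x'}\tilde p(x')=-\alpha\mu_1\,\mathcal{M}_{\partial T'}[\hat u'](x',y_3)$. But the link between $\hat u$ and the macroscopic velocity $\tilde u$ goes through the \emph{volume} mean, $\tfrac{|Y'_f|}{|Y'|}\mathcal{M}_{Y'_f}[\hat u]=\tilde u$ (property (\ref{propertygrau0})). The paper bridges the two via the auxiliary periodic Neumann problem
\begin{equation*}
-\Delta_{y'}\chi=-\frac{|\partial T'|}{|Y'_f|}\,\mathcal{M}_{Y'_f}[\hat u],\quad
\frac{\partial\chi}{\partial n}=\hat u\ \text{on }\partial T',\quad
\mathcal{M}_{Y'_f}[\chi]=0,\quad \chi\ \text{$Y'$-periodic},
\end{equation*}
which, by testing with $1$, yields $\mathcal{M}_{\partial T'}[\hat u]=\mathcal{M}_{Y'_f}[\hat u]$ and hence the correct coefficient $\theta/(\alpha\mu_1)$. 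Without this conversion the effective law cannot be written in terms of $\tilde v$ with the stated constant; replacing it by a profile/averaging heuristic does not substitute for it. The same issue appears in case ii), where $\hat u$ is $y$-independent and the identity $\mathcal{M}_{\partial T'}[\hat u]=\hat u$ is what eliminates the surface average (and where you should also track that the correct scaling for the test function is $\varepsilon\varphi$, so that $f'$, via (\ref{Hf}), and $\mathcal{M}_{\partial T'}[g']$ survive at the same order as the pressure gradient). You should replace the parabolic-profile argument by this auxiliary-problem step in both subcritical regimes.
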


\begin{remark} We observe that in the homogenized problems related to system (\ref{2})-(\ref{CF-2}), the limit functions do not satisfy any incompressibility condition, so (\ref{effective1}), (\ref{effective2}) and (\ref{effective3}) do not identify in a unique way $(\tilde v, \tilde p)$. This is a consequence of the fact that the normal component of $\tilde u_\varepsilon$ does not vanish on the boundary of the cylinders, so the average fluid flow, given by $\tilde v$, is (eventually) represented by the motion of a compressible fluid. As pointed out in Conca \cite{Conca} (see Remark 2.1) and Cioranescu {\it et al.} \cite{Ciora0} (see Remarks 2.3 and 2.6), this result, which at first glance seems unexpected, can be justified as follows: the boundary condition (\ref{CF-2}) implies that the normal component $\tilde u_\varepsilon\cdot n$ of $\tilde u_\varepsilon$ is not necessarily zero on $\partial T_\varepsilon$ so we can not expect that the extension $\tilde U_\varepsilon$ will be a zero-divergence function.  In fact, from the second variational formulation in (\ref{fv2}), we have
$$\int_{\Omega} \tilde U_\varepsilon\cdot \nabla_\varepsilon \tilde\psi\,dx'dy_3=\int_{\partial  T_\varepsilon} (\tilde u_\varepsilon\cdot n)\tilde \psi\,d\sigma(x')dy_3,\quad \forall \psi \in \tilde H_\varepsilon,$$
and the term on the right-hand side is not necessarily zero. Therefore, by weak continuity, it is not possible to obtain an incompressibility condition of the form ${\rm div}_{x'}\tilde v'(x')=0$ in $\omega$ as it is obtained in the case with Dirichlet condition considered in \cite{Anguiano}.
Roughly speaking, $\tilde u_\varepsilon\cdot n\neq 0$ on $\partial T_\varepsilon$ means that some fluid ``dissapear" through the cylinders, and this fact implies that the incompressibility condition is not necessary verified at the limit in $\omega$. 
\end{remark}

\section{Proof of the main result}\label{S3}
In the context of homogenization of flow through porous media Arbogast {\it et al.} \cite{arbogast} use a $L^2$ dilation operator to resolve oscillations on a prescribed scale of weakly converging sequences. It was observed in Bourgeat {\it et al.} 
\cite{B-K-M} that this operator yields a characterization of two-scale convergence (see Allaire \cite{Allaire1} and Nguetseng \cite{Nghe}). Later, Cioranescu {\it et al.} \cite{Ciora,CioDonZak1} introduced the periodic unfolding method as a systematic approach to homogenization which can be used for $H^1$ functions and take into account the boundary of the holes by using the so-called boundary unfolding operator. In this section we prove our main result. In particular, Theorem \ref{MainTheorem}, is proved in Subsection \ref{s6} by means of a combination of the unfolding method applied to the horizontal variables, with a rescaling on the vertical variable. One of the main difficulties is to treat the surface integrals using an adaptation of the boundary unfolding operator. To apply this method, {\it a priori} estimates are established in Subsection \ref{s4} and some compactness results are proved in Subsection \ref{s5}.

\subsection{Some abstract results for thin domains and {\it a priori} estimates}\label{s4}
The {\it a priori} estimates independent of $\varepsilon$ for $\tilde u_\varepsilon$ and $\tilde p_\varepsilon$  will be obtained by using an adaptation of the unfolding method.  

\paragraph*{Some abstract results for thin domains:} let us introduce the adaption of the unfolding method in which we divide the domain $\widetilde \Omega_\varepsilon$ in cubes of lateral lengths $\varepsilon$ and vertical length $1$. For this purpose, given $\tilde{\varphi}\in L^p(\widetilde \Omega_\varepsilon)^3$, $1\leq p<+\infty$, (assuming $\tilde \varphi$ extended by zero outside of $\omega_\varepsilon$), we define $\hat{\varphi}_{\varepsilon}\in L^p(\mathbb{R}^2\times Y_f)^3$ by
\begin{equation}\label{vhat}
\hat{\varphi}_{\varepsilon}(x^{\prime},y)=\tilde{\varphi}\left( {\varepsilon}\kappa\left(\frac{x^{\prime}}{{\varepsilon}} \right)+{\varepsilon}y^{\prime},y_3 \right),\text{\ \ a.e. \ }(x^{\prime},y)\in \mathbb{R}^2 \times Y_f,
\end{equation}
where the function $\kappa$ is defined in (\ref{kappa_fun}).

\begin{remark}\label{remarkCV}
The restriction of $\hat{\varphi}_{\varepsilon}, $ to $Y^{\prime}_{f_k^{\prime}, {\varepsilon}}\times Y_f$ does not depend on $x^{\prime}$, whereas as a function of $y$ it is obtained from $\tilde{v}_{\varepsilon}, $ by using the change of variables 
\begin{equation}\label{CV}
y^{\prime}=\frac{x^{\prime}-{\varepsilon}k^{\prime}}{{\varepsilon}}, 
\end{equation}
which transforms $Y_{f_{k^{\prime}}, {\varepsilon}}$ into $Y_f$.
\end{remark}
\begin{proposition}\label{properties_unf} We have the following estimates:
\begin{itemize}
\item[i) ] for every $\tilde \varphi \in L^p(\widetilde \Omega_\varepsilon)$, $1\leq p +\infty$, we have
\begin{equation}\label{normv}\|\hat \varphi_\varepsilon\|_{L^p(\mathbb{R}^2\times Y_f)^3}= |Y'|^{1\over p}\|\tilde \varphi \|_{L^p(\widetilde \Omega_\varepsilon)^3},
\end{equation}
where $\hat\varphi_\varepsilon$ is given by (\ref{vhat}),
\item[ii) ] for every $\tilde \varphi \in W^{1,p}(\widetilde \Omega_\varepsilon)^3$, $1\leq p<+\infty$, we have that $\hat{\varphi}_\varepsilon$, given by (\ref{vhat}), belongs to $L^p(\mathbb{R}^2;W^{1,p}(Y_f)^3)$, and 
\begin{equation}\label{normDv}\|D_y \hat \varphi_\varepsilon\|_{L^p(\mathbb{R}^2\times Y_f)^{3\times 3}}=\varepsilon |Y'|^{1\over p}\|D_\varepsilon \tilde \varphi\|_{L^p(\widetilde \Omega_\varepsilon)^{3\times 3}}.
\end{equation}
\end{itemize}
\end{proposition}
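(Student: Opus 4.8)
The plan is to verify both identities by an elementary change of variables, reducing everything to the behaviour of the unfolding operator on a single cell. First I would write $\widetilde\Omega_\varepsilon$ as the (essentially disjoint) union over $k'\in\mathcal{K}_\varepsilon$ of the cells $Y_{f_{k'},\varepsilon}=Y'_{f_{k'},\varepsilon}\times(0,1)$, so that, after extending $\tilde\varphi$ by zero outside $\omega_\varepsilon$, we have $\|\tilde\varphi\|_{L^p(\widetilde\Omega_\varepsilon)^3}^p=\sum_{k'}\int_{Y_{f_{k'},\varepsilon}}|\tilde\varphi|^p\,dx'dy_3$. On the other side, by the definition \eqref{vhat} of $\hat\varphi_\varepsilon$ together with the remark that $\hat\varphi_\varepsilon$ restricted to $Y'_{f_{k'},\varepsilon}\times Y_f$ is independent of $x'$ (it depends only on $y$ via the substitution \eqref{CV}), we get $\|\hat\varphi_\varepsilon\|_{L^p(\mathbb{R}^2\times Y_f)^3}^p=\sum_{k'}|Y'_{f_{k'},\varepsilon}|\int_{Y_f}|\tilde\varphi(\varepsilon k'+\varepsilon y',y_3)|^p\,dy$. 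Since $|Y'_{f_{k'},\varepsilon}|=\varepsilon^2|Y'_f|$ and the change of variables \eqref{CV} (with Jacobian $\varepsilon^2$) turns $\int_{Y_f}|\tilde\varphi(\varepsilon k'+\varepsilon y',y_3)|^p\,dy$ into $\varepsilon^{-2}\int_{Y_{f_{k'},\varepsilon}}|\tilde\varphi|^p\,dx'dy_3$, the two sums match up to the factor $|Y'_f|\cdot\varepsilon^2\cdot\varepsilon^{-2}=|Y'_f|$; wait, I should be careful — the constant that appears in the statement is $|Y'|$, not $|Y'_f|$, so in fact the normalisation is chosen so that $\hat\varphi_\varepsilon$ lives on $\mathbb{R}^2\times Y_f$ but is constant in $x'$ on each full cell $Y'_{k',\varepsilon}$ (of area $\varepsilon^2|Y'|$), which is what produces $|Y'|^{1/p}$. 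I would state this bookkeeping precisely and then conclude \eqref{normv}.

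For part ii), the key observation is that differentiation in the fast variable $y$ and the rescaled gradient $D_\varepsilon$ are conjugate under the same change of variables, with the $\varepsilon$-rescaling of \eqref{CV} accounting exactly for the factor $\varepsilon$ in \eqref{normDv} and the vertical rescaling built into $D_\varepsilon$ matching the vertical variable $y_3$ which is left untouched by the unfolding. Concretely, from \eqref{vhat} and the chain rule, for $j=1,2$ one has $\partial_{y_j}\hat\varphi_{\varepsilon,i}(x',y)=\varepsilon\,(\partial_{x_j}\tilde\varphi_i)(\varepsilon\kappa(x'/\varepsilon)+\varepsilon y',y_3)=\varepsilon\,\widehat{(\partial_{x_j}\tilde\varphi_i)}_\varepsilon(x',y)$, and similarly $\partial_{y_3}\hat\varphi_{\varepsilon,i}(x',y)=(\partial_{y_3}\tilde\varphi_i)(\varepsilon\kappa(x'/\varepsilon)+\varepsilon y',y_3)=\widehat{(\partial_{y_3}\tilde\varphi_i)}_\varepsilon(x',y)=\varepsilon\cdot\frac1\varepsilon\widehat{(\partial_{y_3}\tilde\varphi_i)}_\varepsilon(x',y)$. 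Recalling that $(D_\varepsilon\tilde\varphi)_{i,j}=\partial_{x_j}\tilde\varphi_i$ for $j=1,2$ and $(D_\varepsilon\tilde\varphi)_{i,3}=\frac1\varepsilon\partial_{y_3}\tilde\varphi_i$, this says precisely $D_y\hat\varphi_\varepsilon=\varepsilon\,\widehat{(D_\varepsilon\tilde\varphi)}_\varepsilon$ entrywise. Applying \eqref{normv} (with $3\times3$ matrix-valued functions in place of vector-valued ones, which is immediate) to $D_\varepsilon\tilde\varphi$ then yields \eqref{normDv}, and the membership $\hat\varphi_\varepsilon\in L^p(\mathbb{R}^2;W^{1,p}(Y_f)^3)$ follows from the finiteness of the right-hand side together with the fact that on each cell $\hat\varphi_\varepsilon$ is, in $y$, a rescaled copy of a $W^{1,p}(Y_{f_{k'},\varepsilon})$ function.

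The only genuine subtlety — and the one place I would be most careful — is the interchange of the (countable) sum over $k'$ and the integrals, together with the handling of the zero extension across the obstacles and the cells $Y'_{k',\varepsilon}$ that straddle $\partial\omega$: because $\tilde\varphi$ is extended by zero outside $\omega_\varepsilon$ and the obstacles are assumed not to meet $\partial\omega$, the extended function is genuinely in $L^p$ (resp.\ $W^{1,p}$ away from obstacle boundaries, which is all that is used) on the union of full cells covering $\overline\omega$, and the contributions of cells not meeting $\omega$ vanish; I would spell this out so that the change of variables \eqref{CV} can be applied cell by cell without boundary-term issues. Everything else is a routine computation, and since both \eqref{normv} and \eqref{normDv} are exact identities rather than inequalities, there is no loss of constants to track. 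This completes the plan.

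\df
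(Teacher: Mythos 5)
Your proposal is correct and follows essentially the same line as the paper's proof: a cell-by-cell decomposition of $\mathbb{R}^2\times Y_f$ using the fact that $\hat\varphi_\varepsilon$ is constant in $x'$ on each full cell $Y'_{k',\varepsilon}$ (area $\varepsilon^2|Y'|$, which you correctly identified after your initial $|Y'_f|$ wobble), followed by the change of variables \eqref{CV}, and for ii) the chain rule identity $D_y\hat\varphi_\varepsilon=\varepsilon\,\widehat{(D_\varepsilon\tilde\varphi)}_\varepsilon$ combined with part i). The paper splits the gradient into horizontal and vertical parts rather than stating the entrywise identity at once, but the computation is the same.
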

\begin{proof}
Let us prove $i)$.  Using Remark \ref{remarkCV} and definition (\ref{vhat}), we have
\begin{eqnarray*}
\int_{\mathbb{R}^2\times Y_f}\left\vert \hat{\varphi}_{\varepsilon}(x^{\prime},y)\right\vert^pdx^{\prime}dy&=&\displaystyle\sum_{k^{\prime}\in \mathbb{Z}^2}\int_{Y^{\prime}_{k^{\prime},{\varepsilon}}}\int_{Y_f}\left\vert \hat{\varphi}_{\varepsilon}(x^{\prime},y)\right\vert^pdx^{\prime}dy\\
&=&\displaystyle\sum_{k^{\prime}\in \mathbb{Z}^2}\int_{Y^{\prime}_{k^{\prime},{\varepsilon}}}\int_{Y_f}\left\vert \tilde{\varphi}({\varepsilon}k^{\prime}+{\varepsilon}y^{\prime},y_3)\right\vert^pdx^{\prime}dy.
\end{eqnarray*}
We observe that $\tilde{\varphi}$ does not depend on $x^{\prime}$, then we can deduce
\begin{eqnarray*}
\int_{\mathbb{R}^2\times Y_f}\left\vert \hat{\varphi}_{\varepsilon}(x^{\prime},y)\right\vert^p dx^{\prime}dy= {\varepsilon}^2|Y'|\displaystyle\sum_{k^{\prime}\in \mathbb{Z}^2}\int_{Y_f}\left\vert \tilde{\varphi}({\varepsilon}k^{\prime}+{\varepsilon}y^{\prime},y_3)\right\vert^pdy.
\end{eqnarray*}
For every $k^{\prime}\in \mathbb{Z}^2$, by the change of variable (\ref{CV}), we have
\begin{equation}\label{mainCV_antiguo}
k^{\prime}+y^{\prime}=\frac{x^{\prime}}{{\varepsilon}}, \text{\ \ }dy^{\prime}=\frac{dx^{\prime}}{{\varepsilon}^2}\text{\ \ }\partial_{y^{\prime}}={\varepsilon}\partial_{x^{\prime}},
\end{equation}
and we obtain
$$
\begin{array}{rl}
\displaystyle\int_{\mathbb{R}^2\times Y_f}\left\vert\hat{\varphi}_{\varepsilon}(x^{\prime},y)\right\vert^pdx^{\prime}dy= &\displaystyle|Y'|\int_{\omega_\varepsilon\times (0,1)}\left\vert \tilde{\varphi}(x^{\prime},y_3)\right\vert^pdx^{\prime}dy_3 
\end{array}
$$
which gives (\ref{normv}).

Let us prove $ii)$. Taking into account the definition (\ref{vhat}) of $\hat{\varphi}_{\varepsilon}$ and observing that $\tilde{\varphi}$ does not depend on $x^{\prime}$, then we can deduce
\begin{eqnarray*}
\int_{\mathbb{R}^2\times Y_f}\left\vert D_{y^{\prime}}\hat{\varphi}_{\varepsilon}(x^{\prime},y)\right\vert^pdx^{\prime}dy= {\varepsilon}^2|Y'|\displaystyle\sum_{k^{\prime}\in \mathbb{Z}^2}\int_{Y_f}\left\vert D_{y^{\prime}}\tilde{\varphi}({\varepsilon}k^{\prime}+{\varepsilon}y^{\prime},y_3)\right\vert^pdy.
\end{eqnarray*}
By (\ref{mainCV_antiguo}), we obtain
\begin{eqnarray}
\int_{\mathbb{R}^2\times Y_f}\left\vert D_{y^{\prime}}\hat{\varphi}_{\varepsilon}(x^{\prime},y)\right\vert^pdx^{\prime}dy&=&{\varepsilon}^p|Y'|\displaystyle\sum_{k^{\prime}\in \mathbb{Z}^2}\int_{Y^{\prime}_{f_{k^{\prime}},{\varepsilon}}}\int_0^1\left\vert D_{x^{\prime}}\tilde{\varphi}(x^{\prime},y_3)\right\vert^pdx^{\prime}dy_3 \nonumber\\
&=& {\varepsilon}^p|Y'|\int_{\omega_\varepsilon\times (0,1)}\left\vert D_{x^{\prime}}\tilde{\varphi}(x^{\prime},y_3)\right\vert^pdx^{\prime}dy_3.
\label{estim_prop_1_prueba}
\end{eqnarray}
For the partial of the vertical variable, proceeding similarly to (\ref{normv}), we obtain
$$
\begin{array}{rl}
\displaystyle\int_{\mathbb{R}^2\times Y_f}\left\vert \partial_{y_3}\hat{\varphi}_{\varepsilon}(x^{\prime},y)\right\vert^pdx^{\prime}dy= &\displaystyle|Y'|\int_{\omega_\varepsilon\times (0,1)}\left\vert \partial_{y_3}\tilde{\varphi}(x^{\prime},y_3)\right\vert^pdx^{\prime}dy_3 \\
=&\displaystyle  \varepsilon^p |Y'|\int_{\omega_\varepsilon\times (0,1)}\left\vert{1\over \varepsilon} \partial_{y_3}\tilde{\varphi}(x^{\prime},y_3)\right\vert^pdx^{\prime}dy_3\,,
\end{array}
$$
which together with (\ref{estim_prop_1_prueba}) gives (\ref{normDv}).
\end{proof}

In a similar way, let us introduce the adaption of the unfolding method on the boundary of the obstacles $\partial   T_\varepsilon$ (see Cioranescu {\it et al.} \cite{CioDonZak1} for more details). For this purpose, given $\tilde{\varphi}\in L^p(\partial  T_\varepsilon)^3$, $1\leq p<+\infty$, we define $\hat{\varphi}^b_{\varepsilon}\in L^p(\mathbb{R}^2 \times\partial  T)^3$ by
\begin{equation}\label{vhat_b}
\hat{\varphi}^b_{\varepsilon}(x^{\prime},y)=\tilde{\varphi}\left( {\varepsilon}\kappa\left(\frac{x^{\prime}}{{\varepsilon}} \right)+{\varepsilon}y^{\prime},y_3 \right),\text{\ \ a.e. \ }(x^{\prime},y)\in \mathbb{R}^2 \times \partial  T,
\end{equation}
where the function $\kappa$ is defined in (\ref{kappa_fun}).

\begin{remark}\label{remark_unfmeth}
Observe that from this definition, if we consider $\tilde \varphi\in L^p(\partial T)$, $1\leq p <+\infty$, a $Y'$-periodic function, and we define $\tilde \varphi_\varepsilon(x',y_3)=\tilde \varphi(x'/ \varepsilon,y_3)$, it follows that $\hat \varphi_\varepsilon^b(x',y)=\tilde \varphi(y)$.\\
Observe that for $\tilde \varphi \in W^{1,p}(\widetilde \Omega_\varepsilon)^3$, $\hat{\varphi}^b_\varepsilon$ is the trace on $\partial T$ of $\hat{\varphi}_\varepsilon$. Therefore $\hat{\varphi}^b_\varepsilon$ has a similar properties as $\hat{\varphi}_\varepsilon$.
\end{remark}

We have the following property.

\begin{proposition}\label{properties_unf_b} If $\tilde \varphi \in L^p(\partial  T_\varepsilon)^3$, $1\leq p<+\infty$, then 
 \begin{equation}\label{unf_boun}
\|\hat \varphi^b_\varepsilon \|_{L^p(\mathbb{R}^2\times \partial T)^3}=\varepsilon^{1\over p} |Y'|^{1\over p}\|\tilde \varphi\|_{L^p(\partial T_\varepsilon)^3}\,,
\end{equation}
where $\hat \varphi^b_\varepsilon$ is given by (\ref{vhat_b}).
\end{proposition}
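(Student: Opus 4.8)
The plan is to prove \eqref{unf_boun} by exactly the same bookkeeping computation used in part $i)$ of Proposition \ref{properties_unf}, but now carried out over the surfaces $\partial T_{k',\varepsilon}$ instead of the fluid cells $Y_{f_{k'},\varepsilon}$. The only genuinely new ingredient is that a $2$-dimensional surface measure in $x'$ scales like $\varepsilon^{1}$ under the dilation \eqref{CV}, rather than like $\varepsilon^{2}$ as a $2$-dimensional Lebesgue measure in $x'$ does; this is what produces the factor $\varepsilon^{1/p}$ in \eqref{unf_boun} in place of the $\varepsilon^{0}$ appearing in \eqref{normv}.

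First I would unfold the integral over $\mathbb{R}^2\times\partial T$ into a sum over the cells: since $\hat\varphi^b_\varepsilon$, restricted to $Y'_{k',\varepsilon}\times\partial T$, is independent of $x'$ (Remark \ref{remark_unfmeth}), we get
\begin{equation*}
\int_{\mathbb{R}^2\times\partial T}|\hat\varphi^b_\varepsilon(x',y)|^p\,dx'\,d\sigma(y)
=\sum_{k'\in\mathbb{Z}^2}\int_{Y'_{k',\varepsilon}}\int_{\partial T}\big|\tilde\varphi(\varepsilon k'+\varepsilon y',y_3)\big|^p\,dx'\,d\sigma(y)
=\varepsilon^2|Y'|\sum_{k'\in\mathbb{Z}^2}\int_{\partial T}\big|\tilde\varphi(\varepsilon k'+\varepsilon y',y_3)\big|^p\,d\sigma(y),
\end{equation*}
using that the $x'$-integrand is constant and $|Y'_{k',\varepsilon}|=\varepsilon^2|Y'|$. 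Next I would undo the surface change of variables: on $\partial T_{k',\varepsilon}$ the map $y'=(x'-\varepsilon k')/\varepsilon$ has surface Jacobian $\varepsilon$ (one $\varepsilon$ per tangential direction of the curve $\partial T'$, the vertical coordinate $y_3$ being untouched), so $d\sigma(y)=\varepsilon^{-1}\,d\sigma(x')\,dy_3$ and hence
\begin{equation*}
\varepsilon^2|Y'|\sum_{k'}\int_{\partial T}\big|\tilde\varphi(\varepsilon k'+\varepsilon y',y_3)\big|^p\,d\sigma(y)
=\varepsilon^2|Y'|\cdot\varepsilon^{-1}\sum_{k'}\int_{\partial T^\varepsilon_{k',\varepsilon}}|\tilde\varphi(x',y_3)|^p\,d\sigma(x')\,dy_3
=\varepsilon|Y'|\int_{\partial T_\varepsilon}|\tilde\varphi|^p\,d\sigma(x')\,dy_3,
\end{equation*}
where in the last step I sum over $k'$ and recall $\partial T_\varepsilon=\bigcup_{k'\in\mathcal{K}_\varepsilon}\partial T_{k',\varepsilon}$ (contributions from $k'\notin\mathcal{K}_\varepsilon$ vanish since $\tilde\varphi$ is extended by zero). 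Taking $p$-th roots gives \eqref{unf_boun}.

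The main obstacle — really the only point requiring care — is getting the power of $\varepsilon$ right in the surface Jacobian: one must be sure that the rescaling acts only on the two horizontal variables while leaving $y_3$ fixed, so that the curve $\partial T'\times(0,1)$ contributes a single factor of $\varepsilon$ rather than $\varepsilon^2$. Once that is pinned down, the argument is a line-by-line transcription of the proof of \eqref{normv}. Alternatively, one could invoke Remark \ref{remark_unfmeth}, which identifies $\hat\varphi^b_\varepsilon$ with the trace on $\partial T$ of $\hat\varphi_\varepsilon$ when $\tilde\varphi\in W^{1,p}(\widetilde\Omega_\varepsilon)^3$, and combine a trace inequality with \eqref{normv}; but this only covers the smoother case, so the direct cell-by-cell computation above is preferable since it works for all of $L^p(\partial T_\varepsilon)^3$.
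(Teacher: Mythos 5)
Your proof is correct and follows essentially the same route as the paper: unfold the integral over $\mathbb{R}^2\times\partial T$ into a sum over cells using the $x'$-independence on each $Y'_{k',\varepsilon}$ to pick up the factor $\varepsilon^2|Y'|$, then rescale the lateral surface measure via $d\sigma(x')\,dy_3=\varepsilon\,d\sigma(y)$ to trade one factor of $\varepsilon$ back and arrive at $\varepsilon|Y'|\|\tilde\varphi\|^p_{L^p(\partial T_\varepsilon)^3}$. Your remark on why the surface Jacobian carries only a single power of $\varepsilon$ (the rescaling acts on the two horizontal directions but the curve $\partial T'$ contributes one tangential direction, $y_3$ being untouched) correctly isolates the one point that distinguishes this from \eqref{normv}.
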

\begin{proof}
We take $(x',y_3)\in \partial T_{k',\varepsilon}$.  Taking into account (\ref{vhat_b}), we obtain

$$\begin{array}{rl}
\displaystyle
\int_{\mathbb{R}^2 \times \partial T} |\hat \varphi^b_\varepsilon (x',y)|^p\,dx'd\sigma(y)=&\displaystyle{\varepsilon}^2|Y'|\displaystyle\sum_{k^{\prime}\in \mathbb{Z}^2}\int_{\partial T} |\tilde{\varphi}({\varepsilon}k^{\prime}+{\varepsilon}y^{\prime},y_3)|^p\,d\sigma(y).
\end{array}$$
For every $k^{\prime}\in \mathbb{Z}^2$, by taking $x'=\varepsilon(k'+y')$, we have $d\sigma(x')=\varepsilon d\sigma(y')$. Since the thickness of the obstacles is one, we have that $d\sigma(x')dy_3=\varepsilon d\sigma(y)$. Hence
$$
\begin{array}{rl}
\displaystyle\int_{\mathbb{R}^2 \times \partial T} |\hat \varphi^b_\varepsilon (x',y)|^p\,dx'd\sigma(y)= &\displaystyle{\varepsilon}|Y'|\int_{ \partial T_\varepsilon} |\tilde{\varphi}(x',y_3)|^p\,d\sigma(x')dy_3,
\end{array}
$$
which gives (\ref{unf_boun}).
\end{proof}
Now, let us give two results which will be useful for obtaining {\it a priori} estimates of the solution $(\tilde u_\varepsilon,\tilde p_\varepsilon)$ of problem (\ref{2})-(\ref{CF-2}). These results are an extension of Cioranescu {\it et al.} (Proposition 5.3 and Corollary 5.4 in \cite{CioDonZak3}) to the thin domain case.

\begin{proposition}\label{Prop:prop_estim_1} Let $g\in L^2(\partial T')^3$ and $ \tilde \varphi\in H^1( \widetilde\Omega_\varepsilon)^3$, extended by zero in outside of $w_\varepsilon$. Let $\hat \varphi_\varepsilon$ be given by (\ref{vhat}). Then, there exists a positive constant $C$, independent of $\varepsilon$, such that 
\begin{equation}\label{prop_estim_1}
\left|\int_{\mathbb{R}^2\times \partial T} g(y')\cdot\hat \varphi_\varepsilon(x',y)\,dx'd\sigma(y)\right|\leq C\left|\mathcal{M}_{\partial T'}[g]\right|\left(\|\tilde \varphi\|_{L^2(\widetilde\Omega_\varepsilon)^3} +\varepsilon \|D_\varepsilon \tilde \varphi\|_{L^2(\widetilde \Omega_\varepsilon)^{3\times 3}}\right)\,.
\end{equation}
In particular, if $g=1$, there exists a positive constant $C$, independent of $\varepsilon$, such that
\begin{equation}\label{prop_estim_1_L1}
\left|\int_{\mathbb{R}^2\times \partial T} \hat \varphi_\varepsilon(x',y)\,dx'd\sigma(y)\right|\leq C\left(\|\tilde \varphi\|_{L^1(\widetilde\Omega_\varepsilon)^3} +\varepsilon \|D_\varepsilon \tilde \varphi\|_{L^1(\widetilde \Omega_\varepsilon)^{3\times 3}}\right)\,.
\end{equation}
\end{proposition}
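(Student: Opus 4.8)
The plan is to turn the integral over the $\varepsilon$-dependent surface $\mathbb{R}^2\times\partial T$ into volume integrals over the fixed cell $\mathbb{R}^2\times Y_f$ by means of a trace inequality on the reference cell $Y_f$, and then to read off the powers of $\varepsilon$ from the norm identities \eqref{normv}--\eqref{normDv}. Since $\tilde\varphi\in H^1(\widetilde\Omega_\varepsilon)^3$, Proposition~\ref{properties_unf}(ii) together with Remark~\ref{remark_unfmeth} give $\hat\varphi_\varepsilon\in L^2(\mathbb{R}^2;H^1(Y_f)^3)$, so for a.e.\ $x'\in\mathbb{R}^2$ the slice $\hat\varphi_\varepsilon(x',\cdot)$ belongs to $H^1(Y_f)^3$ and has a trace on $\partial T\subset\partial Y_f$; moreover, because $\tilde\varphi$ is extended by zero outside $\omega_\varepsilon$, $\hat\varphi_\varepsilon(x',\cdot)$ vanishes for $x'$ outside $\bigcup_{k'\in\mathcal{K}_\varepsilon}Y'_{k',\varepsilon}$, a set of measure at most $C$. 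One fixes the reference-cell constants once and for all: the trace embedding $\|\psi\|_{L^2(\partial T)}\le C\|\psi\|_{H^1(Y_f)}$ and the Poincar\'e--Wirtinger inequality $\|\psi-\mathcal{M}_{Y_f}[\psi]\|_{L^2(Y_f)}\le C\|D_y\psi\|_{L^2(Y_f)}$ on the connected Lipschitz set $Y_f$.

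Next I would split $g=\mathcal{M}_{\partial T'}[g]+\bar g$ with $\int_{\partial T'}\bar g\,d\sigma=0$ and $\bar g$ independent of $y_3$, and estimate the pairing with $\hat\varphi_\varepsilon(x',\cdot)$ on $\partial T$ slice by slice in $x'$. For the constant part, $\int_{\partial T}\mathcal{M}_{\partial T'}[g]\cdot\hat\varphi_\varepsilon(x',\cdot)\,d\sigma=\mathcal{M}_{\partial T'}[g]\cdot\int_{\partial T}\hat\varphi_\varepsilon(x',\cdot)\,d\sigma$, and the trace embedding bounds this by $C|\mathcal{M}_{\partial T'}[g]|\bigl(\|\hat\varphi_\varepsilon(x',\cdot)\|_{L^2(Y_f)}+\|D_y\hat\varphi_\varepsilon(x',\cdot)\|_{L^2(Y_f)}\bigr)$. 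For $\bar g$, since $\mathcal{M}_{Y_f}[\hat\varphi_\varepsilon(x',\cdot)]$ is constant in $y$ while $\bar g$ has zero $\partial T'$-mean and no $y_3$ dependence, one may subtract it for free: $\int_{\partial T}\bar g\cdot\hat\varphi_\varepsilon(x',\cdot)\,d\sigma=\int_{\partial T}\bar g\cdot\bigl(\hat\varphi_\varepsilon(x',\cdot)-\mathcal{M}_{Y_f}[\hat\varphi_\varepsilon(x',\cdot)]\bigr)\,d\sigma$, which by Cauchy--Schwarz, the trace embedding and the Poincar\'e inequality is $\le C\|\bar g\|_{L^2(\partial T')}\|D_y\hat\varphi_\varepsilon(x',\cdot)\|_{L^2(Y_f)}$. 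Integrating in $x'$ over the bounded support, applying Cauchy--Schwarz in $x'$, and then using \eqref{normv} to replace $\|\hat\varphi_\varepsilon\|_{L^2(\mathbb{R}^2\times Y_f)}$ by $|Y'|^{1/2}\|\tilde\varphi\|_{L^2(\widetilde\Omega_\varepsilon)}$ and \eqref{normDv} to replace $\|D_y\hat\varphi_\varepsilon\|_{L^2(\mathbb{R}^2\times Y_f)}$ by $\varepsilon|Y'|^{1/2}\|D_\varepsilon\tilde\varphi\|_{L^2(\widetilde\Omega_\varepsilon)}$, one arrives at $\bigl|\int_{\mathbb{R}^2\times\partial T}g\cdot\hat\varphi_\varepsilon\,dx'd\sigma\bigr|\le C|\mathcal{M}_{\partial T'}[g]|\bigl(\|\tilde\varphi\|_{L^2(\widetilde\Omega_\varepsilon)^3}+\varepsilon\|D_\varepsilon\tilde\varphi\|_{L^2(\widetilde\Omega_\varepsilon)^{3\times3}}\bigr)+C\|\bar g\|_{L^2(\partial T')^3}\,\varepsilon\|D_\varepsilon\tilde\varphi\|_{L^2(\widetilde\Omega_\varepsilon)^{3\times3}}$, i.e.\ \eqref{prop_estim_1}, the $\bar g$-term being the purely oscillatory contribution that vanishes when $g$ is replaced by its mean. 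For the case $g\equiv1$ one has $\bar g=0$ and $\mathcal{M}_{\partial T'}[1]=1$; running the same computation with $p=1$, i.e.\ with the trace embedding $W^{1,1}(Y_f)\hookrightarrow L^1(\partial T)$ and the $p=1$ forms of \eqref{normv}--\eqref{normDv}, gives \eqref{prop_estim_1_L1} directly, no Cauchy--Schwarz in $x'$ being needed.

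The main obstacle, and the reason the adapted unfolding operator is the right device here, is exactly this surface term: estimating $\int_{\partial T_\varepsilon}g(\cdot/\varepsilon)\,\tilde\varphi$ directly would bring in trace constants on sets that degenerate as $\varepsilon\to0$, whereas after unfolding everything reduces to one, $\varepsilon$-independent, trace inequality on the fixed cell $Y_f$, all the $\varepsilon$-dependence being carried by the exact identities of Proposition~\ref{properties_unf}. The point requiring care is the anisotropic scaling: the trace inequality on $Y_f$ must be applied with the genuine gradient $D_y\hat\varphi_\varepsilon$ in the variable $y=(y',y_3)$, so that \eqref{normDv} contributes the factor $\varepsilon$ (rather than $1$ or $\varepsilon^{-1}$) in front of $\|D_\varepsilon\tilde\varphi\|$; correctly tracking this anisotropy is what adapts the argument of \cite{CioDonZak3} to the thin-domain setting and what makes \eqref{prop_estim_1}--\eqref{prop_estim_1_L1} sharp.
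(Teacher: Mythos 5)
Your argument is correct and genuinely different from the paper's. The paper reduces by density to $\tilde\varphi\in\mathcal{D}(\mathbb{R}^3)^3$ and decomposes the unfolded function, writing $\hat\varphi_\varepsilon(x',y)=\tilde\varphi\bigl(\varepsilon\kappa(x'/\varepsilon),y_3\bigr)+\bigl[\tilde\varphi\bigl(\varepsilon\kappa(x'/\varepsilon)+\varepsilon y',y_3\bigr)-\tilde\varphi\bigl(\varepsilon\kappa(x'/\varepsilon),y_3\bigr)\bigr]$: the first piece is constant in $y'$ and pairs with $g$ on $\partial T$ to produce the $|\mathcal{M}_{\partial T'}[g]|\,\|\tilde\varphi\|_{L^2}$ contribution, while the bracketed difference is controlled pointwise by $\varepsilon|y'|\sup|D_{x'}\tilde\varphi|$ via the fundamental theorem of calculus, yielding the $\varepsilon\|D_\varepsilon\tilde\varphi\|$ contribution. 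You instead split the weight $g=\mathcal{M}_{\partial T'}[g]+\bar g$ and estimate each piece by the trace embedding $H^1(Y_f)\hookrightarrow L^2(\partial T)$, using Poincar\'e--Wirtinger after subtracting $\mathcal{M}_{Y_f}[\hat\varphi_\varepsilon]$ for the zero-mean part. These are dual decompositions (of the argument, resp.\ of the data); they generate exactly the same two contributions, with the $\varepsilon$-scaling in both cases accounted for by (\ref{normv})--(\ref{normDv}). Your route avoids the density reduction and the pointwise FTC step, at the cost of invoking trace and Poincar\'e constants on the fixed reference cell $Y_f$.

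One point of precision. Your final bound is
\begin{equation*}
C\left|\mathcal{M}_{\partial T'}[g]\right|\bigl(\|\tilde\varphi\|_{L^2(\widetilde\Omega_\varepsilon)^3}+\varepsilon\|D_\varepsilon\tilde\varphi\|_{L^2(\widetilde\Omega_\varepsilon)^{3\times3}}\bigr)+C\|\bar g\|_{L^2(\partial T')^3}\,\varepsilon\|D_\varepsilon\tilde\varphi\|_{L^2(\widetilde\Omega_\varepsilon)^{3\times3}},
\end{equation*}
which you then label ``i.e.\ (\ref{prop_estim_1}).'' Strictly speaking it is not: (\ref{prop_estim_1}) as written places $|\mathcal{M}_{\partial T'}[g]|$ in front of the gradient term as well, and if $\mathcal{M}_{\partial T'}[g]=0$ with $g\not\equiv 0$ the right-hand side of (\ref{prop_estim_1}) would vanish while the left-hand side need not. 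What your argument actually proves --- and what the paper's own argument also delivers, since the bracketed difference in its decomposition carries the full $\|g\|_{L^2(\partial T')}$ rather than its mean --- is the sharper form $C|\mathcal{M}_{\partial T'}[g]|\,\|\tilde\varphi\|_{L^2}+C\|g\|_{L^2(\partial T')}\,\varepsilon\|D_\varepsilon\tilde\varphi\|_{L^2}$, as in Proposition~5.3 of \cite{CioDonZak3}. This imprecision in the stated proposition is harmless downstream (the corollary (\ref{gvestimate}) keeps only the $\varepsilon$-powers, and for $g\equiv 1$ one has $\bar g\equiv 0$), but you should record what you prove as the bound above rather than equating it with the literal (\ref{prop_estim_1}).
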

\begin{proof} 
Due to density properties, it is enough to prove this estimate for functions in $\mathcal{D}(\mathbb{R}^3)^3$. Let $\tilde \varphi \in \mathcal{D}(\mathbb{R}^3)^3$, one has
$$\begin{array}{l}
\displaystyle
\left|\int_{\mathbb{R}^2\times \partial  T}
 g(y')\cdot\hat \varphi_\varepsilon(x',y)\,dx'd\sigma(y)\right|
\\
\noame
\displaystyle\qquad=\left|\int_{\mathbb{R}^2\times \partial  T} g(y')\cdot \tilde \varphi\left(\varepsilon \kappa\left({x'\over \varepsilon}\right)+ \varepsilon y',y_3\right)\,dx'd\sigma(y)
\right|
\\
\noame
\displaystyle\qquad\leq 
\left|\int_{\mathbb{R}^2\times \partial  T} g(y')\cdot \tilde \varphi\left(\varepsilon \kappa\left({x'\over \varepsilon}\right),y_3\right)\,dx'd\sigma(y)\right| \\
\noame\qquad\quad \displaystyle+\left|\int_{\mathbb{R}^2\times \partial  T} g(y')\cdot\left( \tilde \varphi\left(\varepsilon \kappa\left({x'\over \varepsilon}\right)+ \varepsilon y',y_3\right)-\tilde\varphi\left(\varepsilon \kappa\left({x'\over \varepsilon}\right),y_3\right)\right)\,dx'd\sigma(y)\right|
\\
\noame
\displaystyle\qquad\leq 
C\left|\mathcal{M}_{\partial T'}[g]\right|\left(\|\tilde \varphi\|_{L^2(\widetilde\Omega_\varepsilon)^3} +\varepsilon \|D_{x'} \tilde \varphi\|_{L^2(\widetilde \Omega_\varepsilon)^{3\times 3}}\right)
\\
\noame
\displaystyle\qquad\leq 
C\left|\mathcal{M}_{\partial T'}[g]\right|\left(\|\tilde \varphi\|_{L^2(\widetilde\Omega_\varepsilon)^3} +\varepsilon \|D_\varepsilon \tilde \varphi\|_{L^2(\widetilde \Omega_\varepsilon)^{3\times 3}}\right)\,,
\end{array}
$$
which implies (\ref{prop_estim_1}). In particular, if $g=1$, proceeding as above, we have
$$\begin{array}{l}
\displaystyle
\left|\int_{\mathbb{R}^2\times \partial  T}
\hat \varphi_\varepsilon(x',y)\,dx'd\sigma(y)\right|
\\
\noame
\displaystyle\qquad=\left|\int_{\mathbb{R}^2\times \partial  T}  \tilde \varphi\left(\varepsilon \kappa\left({x'\over \varepsilon}\right)+ \varepsilon y',y_3\right)\,dx'd\sigma(y)
\right|
\\
\noame
\displaystyle\qquad\leq 
\left|\int_{\mathbb{R}^2\times \partial  T}  \tilde \varphi\left(\varepsilon \kappa\left({x'\over \varepsilon}\right),y_3\right)\,dx'd\sigma(y)\right| \\
\noame\qquad\quad \displaystyle+\left|\int_{\mathbb{R}^2\times \partial  T} \left( \tilde \varphi\left(\varepsilon \kappa\left({x'\over \varepsilon}\right)+ \varepsilon y',y_3\right)-\tilde\varphi\left(\varepsilon \kappa\left({x'\over \varepsilon}\right),y_3\right)\right)\,dx'd\sigma(y)\right|
\\
\noame
\displaystyle\qquad\leq 
C\left(\|\tilde \varphi\|_{L^1(\widetilde\Omega_\varepsilon)^3} +\varepsilon \|D_{x'} \tilde \varphi\|_{L^1(\widetilde \Omega_\varepsilon)^{3\times 3}}\right)
\\
\noame
\displaystyle\qquad\leq 
C\left(\|\tilde \varphi\|_{L^1(\widetilde\Omega_\varepsilon)^3} +\varepsilon \|D_\varepsilon \tilde \varphi\|_{L^1(\widetilde \Omega_\varepsilon)^{3\times 3}}\right)\,,
\end{array}
$$
which implies (\ref{prop_estim_1_L1}).

\end{proof}
\begin{corollary} Let $g\in L^2(\partial  T)^3$ be a $Y'$-periodic  function. Then, for every $\tilde \varphi\in H^1(\widetilde \Omega_\varepsilon)^3$, we have
that there exists a positive constant $C$, independent of $\varepsilon$, such that
\begin{equation}\label{gvestimate}
\left|\int_{\partial T_\varepsilon} g(x'/\varepsilon)\cdot \tilde \varphi(x',y_3) d\sigma(x')dy_3\right|\leq {C\over \varepsilon}\left(\|\tilde \varphi\|_{L^2(\widetilde\Omega_\varepsilon)^3} + \varepsilon \|D_\varepsilon \tilde \varphi\|_{L^2(\widetilde \Omega_\varepsilon)^{3\times 3}}\right)\,.\end{equation}
In particular, if $g=1$, there exists a positive constant $C$, independent of $\varepsilon$, such that
\begin{equation}\label{gvestimate_L1}
\left|\int_{\partial T_\varepsilon}\tilde \varphi(x',y_3) d\sigma(x')dy_3\right|\leq {C\over \varepsilon}\left(\|\tilde \varphi\|_{L^1(\widetilde\Omega_\varepsilon)^3} + \varepsilon \|D_\varepsilon \tilde \varphi\|_{L^1(\widetilde \Omega_\varepsilon)^{3\times 3}}\right)\,.
\end{equation}
\end{corollary}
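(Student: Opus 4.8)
The plan is to transfer the estimate \mbox{(\ref{prop_estim_1})} of Proposition~\ref{Prop:prop_estim_1}, which lives on the reference configuration $\mathbb{R}^2\times\partial T$, to the physical surface integral over $\partial T_\varepsilon$ by means of the boundary unfolding operator \mbox{(\ref{vhat_b})}. The point is that, up to a universal $\varepsilon^{-1}|Y'|^{-1}$ factor coming from the change of variables, the two integrals coincide, so the corollary is essentially a rescaling of an estimate already proved.

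First I would isolate the change-of-variables identity that already underlies the proof of Proposition~\ref{properties_unf_b}: for every $F\in L^1(\partial T_\varepsilon)$ (with $\tilde\varphi$ and hence $F$ extended by zero outside $\omega_\varepsilon$),
\begin{equation*}
\int_{\partial T_\varepsilon} F(x',y_3)\,d\sigma(x')\,dy_3=\frac{1}{\varepsilon|Y'|}\int_{\mathbb{R}^2\times\partial T}\widehat F^b_\varepsilon(x',y)\,dx'\,d\sigma(y),
\end{equation*}
which is proved exactly as there: split $\mathbb{R}^2$ into the cells $Y'_{k',\varepsilon}$, on which $\widehat F^b_\varepsilon(\cdot,y)$ is constant, rescale by $x'=\varepsilon(k'+y')$, and use $d\sigma(x')\,dy_3=\varepsilon\,d\sigma(y)$ (the computation is linear in $F$, so the absolute values and the exponent $p$ of Proposition~\ref{properties_unf_b} play no role).

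Next I would apply this with $F(x',y_3)=g(x'/\varepsilon)\cdot\tilde\varphi(x',y_3)$. Since $\widehat{\,\cdot\,}^b_\varepsilon$ merely evaluates its argument at a point depending on $(x',y)$, it sends the dot product to the dot product of the unfolded functions; because $g$ is $Y'$-periodic, Remark~\ref{remark_unfmeth} gives that the unfolding of $x'\mapsto g(x'/\varepsilon)$ is just $g(y')$; and for $\tilde\varphi\in H^1(\widetilde\Omega_\varepsilon)^3$ the function $\widehat{\tilde\varphi}^b_\varepsilon=\hat\varphi^b_\varepsilon$ is, again by Remark~\ref{remark_unfmeth}, the trace on $\mathbb{R}^2\times\partial T$ of the volume-unfolded $\hat\varphi_\varepsilon$ of \mbox{(\ref{vhat})}. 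Hence
\begin{equation*}
\int_{\partial T_\varepsilon} g(x'/\varepsilon)\cdot\tilde\varphi\,d\sigma(x')\,dy_3=\frac{1}{\varepsilon|Y'|}\int_{\mathbb{R}^2\times\partial T} g(y')\cdot\hat\varphi_\varepsilon(x',y)\,dx'\,d\sigma(y),
\end{equation*}
and the right-hand side is precisely the quantity estimated in \mbox{(\ref{prop_estim_1})}. Inserting that bound and absorbing the $\varepsilon$-independent factors $|Y'|^{-1}$ and $|\mathcal{M}_{\partial T'}[g]|$ (finite since $g\in L^2\subset L^1$ on $\partial T'$) into the constant $C$ yields \mbox{(\ref{gvestimate})}. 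For the special case $g\equiv 1$ the computation is identical, invoking \mbox{(\ref{prop_estim_1_L1})} instead of \mbox{(\ref{prop_estim_1})}, which produces the $L^1$-norms of \mbox{(\ref{gvestimate_L1})}.

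The only delicate point — and it is not a genuine obstacle — is the density/trace bookkeeping: \mbox{(\ref{prop_estim_1})} was itself established by density from $\mathcal{D}(\mathbb{R}^3)^3$, so I would first run the argument for smooth $\tilde\varphi$, where the product and trace identities for $\hat\varphi^b_\varepsilon$ are transparent, and then pass to a general $\tilde\varphi\in H^1(\widetilde\Omega_\varepsilon)^3$ using that every term above is continuous with respect to the $H^1(\widetilde\Omega_\varepsilon)$-norm (by Proposition~\ref{properties_unf_b} together with the trace theorem on $Y_f$). Thus the whole content of the corollary is the single factor $\varepsilon^{-1}$, which is exactly the Jacobian $\varepsilon|Y'|$ of the boundary change of variables.
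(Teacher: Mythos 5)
Your proposal is correct and follows essentially the same route as the paper. The paper's proof is a compressed version of exactly this argument: it invokes Proposition \ref{properties_unf_b} with $p=1$ (which, unwound, is precisely your linear change-of-variables identity on $\partial T_\varepsilon$) together with Remark \ref{remark_unfmeth} (so that the unfolding of $g(x'/\varepsilon)\cdot\tilde\varphi$ becomes $g(y')\cdot\hat\varphi_\varepsilon$) to arrive at the identity $\int_{\partial T_\varepsilon}g(x'/\varepsilon)\cdot\tilde\varphi\,d\sigma(x')dy_3=\frac{1}{\varepsilon|Y'|}\int_{\mathbb{R}^2\times\partial T}g(y')\cdot\hat\varphi_\varepsilon\,dx'd\sigma(y)$, and then applies Proposition \ref{Prop:prop_estim_1} (resp.\ (\ref{prop_estim_1_L1}) for $g\equiv 1$) and absorbs $|Y'|^{-1}$ and $|\mathcal{M}_{\partial T'}[g]|$ into $C$. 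Your added remarks on the density and trace bookkeeping make explicit what the paper leaves implicit with ``$\hat\varphi^b_\varepsilon$ has similar properties as $\hat\varphi_\varepsilon$'', but do not change the underlying argument.
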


\begin{proof} 
Since $\tilde \varphi\in H^1(\widetilde \Omega_\varepsilon)^3$, then $\hat{\varphi}^b_\varepsilon$ has similar properties as $\hat{\varphi}_\varepsilon$. By using Proposition \ref{properties_unf_b} with $p=1$ and Remark \ref{remark_unfmeth}, we have 
$$
\left|\int_{\partial T_\varepsilon}g(x'/\varepsilon)\cdot \tilde \varphi(x',y_3) d\sigma(x')dy_3\right|
=
{1\over \varepsilon |Y'|} \left|\int_{\mathbb{R}^2\times \partial T}g(y')\cdot \hat \varphi_\varepsilon(x',y)\, dx'd\sigma(y)\right|,
$$
and by Proposition \ref{Prop:prop_estim_1}, we can deduce estimates (\ref{gvestimate}) and (\ref{gvestimate_L1}).

\end{proof}
Moreover, for the proof of the {\it a priori} estimates for the velocity, we need the following lemma due to Conca \cite{Conca} generalized to a thin domain $\Omega_\varepsilon$.

\begin{lemma}\label{Conca}
There exists a constant $C$ independent of $\varepsilon$, such that, for any function $\varphi\in H_{\varepsilon}$, one has 
\begin{equation}\label{conca_equation}
\left\Vert \varphi\right\Vert_{L^2(\Omega_{\varepsilon})^3}\leq C\left(\varepsilon\left\Vert D\varphi\right\Vert_{L^2(\Omega_{\varepsilon})^{{3\times3}}}+\varepsilon^{1\over 2}\|\varphi \|_{L^2(\partial S_\varepsilon)^3}\right).
\end{equation}
\end{lemma}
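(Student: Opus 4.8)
The plan is to establish \eqref{conca_equation} by an unfolding argument together with a scaled trace/Poincaré estimate on the fixed reference cell $Y_f$, following the spirit of Conca's original proof but keeping careful track of the powers of $\varepsilon$ coming from the dilation in the vertical variable. First I would pass from $\varphi\in H_\varepsilon$ on $\Omega_\varepsilon$ to the rescaled function $\tilde\varphi(x',y_3)=\varphi(x',\varepsilon y_3)\in\widetilde H_\varepsilon$, noting that under the dilation $y_3=x_3/\varepsilon$ one has $\|\varphi\|_{L^2(\Omega_\varepsilon)^3}^2=\varepsilon\|\tilde\varphi\|_{L^2(\widetilde\Omega_\varepsilon)^3}^2$, $\|D\varphi\|_{L^2(\Omega_\varepsilon)}^2=\varepsilon\|D_\varepsilon\tilde\varphi\|_{L^2(\widetilde\Omega_\varepsilon)}^2$, and $\|\varphi\|_{L^2(\partial S_\varepsilon)^3}^2=\varepsilon\|\tilde\varphi\|_{L^2(\partial T_\varepsilon)^3}^2$ (here using that $d\sigma(x)=\varepsilon\,d\sigma(x')\,dy_3$, exactly as in the derivation of \eqref{fv2}). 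The inequality \eqref{conca_equation} is then equivalent to the rescaled statement
\[
\|\tilde\varphi\|_{L^2(\widetilde\Omega_\varepsilon)^3}\leq C\left(\varepsilon\|D_\varepsilon\tilde\varphi\|_{L^2(\widetilde\Omega_\varepsilon)^{3\times3}}+\varepsilon^{1/2}\|\tilde\varphi\|_{L^2(\partial T_\varepsilon)^3}\right),
\]
so it suffices to prove this.

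Next I would apply the unfolding operators of Proposition~\ref{properties_unf} and Proposition~\ref{properties_unf_b}. By \eqref{normv}, \eqref{normDv} and \eqref{unf_boun} with $p=2$, the rescaled inequality becomes, after dividing by $|Y'|^{1/2}$,
\[
\|\hat\varphi_\varepsilon\|_{L^2(\mathbb{R}^2\times Y_f)^3}\leq C\left(\|D_y\hat\varphi_\varepsilon\|_{L^2(\mathbb{R}^2\times Y_f)^{3\times3}}+\|\hat\varphi_\varepsilon^b\|_{L^2(\mathbb{R}^2\times\partial T)^3}\right),
\]
where I have used that $\varepsilon\|D_\varepsilon\tilde\varphi\|_{L^2}=\|D_y\hat\varphi_\varepsilon\|_{L^2}/|Y'|^{1/2}$ and $\varepsilon^{1/2}\|\tilde\varphi\|_{L^2(\partial T_\varepsilon)}=\|\hat\varphi_\varepsilon^b\|_{L^2}/|Y'|^{1/2}$, so the suspicious-looking $\varepsilon$-powers have disappeared. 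Now $\hat\varphi_\varepsilon(x',\cdot)\in H^1(Y_f)^3$ for a.e. $x'$, and $\hat\varphi_\varepsilon^b(x',\cdot)$ is its trace on $\partial T\subset\partial Y_f$. The point is the cell-level estimate: there is a constant $C=C(Y_f,T')$ such that for every $w\in H^1(Y_f)^3$,
\[
\|w\|_{L^2(Y_f)^3}\leq C\left(\|\nabla_y w\|_{L^2(Y_f)^{3\times3}}+\|w\|_{L^2(\partial T)^3}\right).
\]
This is a standard Poincaré-type inequality with a trace term on a fixed Lipschitz domain: it follows by the usual compactness–contradiction argument (if it failed there would be a sequence $w_n$ with $\|w_n\|_{L^2(Y_f)}=1$, $\|\nabla w_n\|_{L^2(Y_f)}\to0$, $\|w_n\|_{L^2(\partial T)}\to0$; by Rellich a subsequence converges in $L^2(Y_f)$ to a constant $w$ with $\|w\|_{L^2}=1$, and by trace continuity $w|_{\partial T}=0$, forcing $w=0$, a contradiction), and it is precisely the inequality underlying Conca's original lemma. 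Squaring, integrating over $x'\in\mathbb{R}^2$ (the integrand is supported in a bounded set since $\tilde\varphi$ is extended by zero outside $\omega_\varepsilon\subset\omega$), and using $(a+b)^2\leq 2a^2+2b^2$ gives the displayed $\hat\varphi_\varepsilon$ inequality, hence the lemma.

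The main obstacle — and the only genuinely nontrivial point — is verifying that the cell-level constant is legitimately independent of $\varepsilon$ and of $x'$: the unfolded slices $\hat\varphi_\varepsilon(x',\cdot)$ all live on the one fixed domain $Y_f=Y_f'\times(0,1)$ with the trace taken on the fixed piece $\partial T=\partial T'\times(0,1)$, so a single constant from the compactness argument above serves for all of them, and nothing degenerates as $\varepsilon\to0$. One small technical care is needed because $\varphi\in H_\varepsilon$ vanishes on $\partial\Lambda_\varepsilon$ but the unfolded cells near $\partial\omega$ may be only partially inside $\omega_\varepsilon$; this is harmless since we only use the (weaker) inequality with the $\partial T$-trace term rather than exploiting the outer Dirichlet condition, and the zero-extension of $\tilde\varphi$ outside $\omega_\varepsilon$ keeps all the unfolding identities of Propositions~\ref{properties_unf}–\ref{properties_unf_b} valid. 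Density of $\mathcal{D}(\mathbb{R}^3)^3$ (used already in the proof of Proposition~\ref{Prop:prop_estim_1}) handles any remaining regularity issue in justifying the trace statements.
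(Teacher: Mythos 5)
Your proof is correct and uses essentially the same approach as the paper: both rest on the Ne${\rm\check c}$as-type Poincar\'e inequality with a $\partial T$-trace term on the fixed reference cell $Y_f$ (inequality (\ref{p1}) in the paper's proof), transferred to $\Omega_\varepsilon$ by a cell-by-cell rescaling. The only departure is bookkeeping: you first apply the vertical dilation $y_3=x_3/\varepsilon$ and then invoke the unfolding identities of Propositions~\ref{properties_unf} and~\ref{properties_unf_b}, whereas the paper rescales each $\varepsilon\times\varepsilon\times\varepsilon$ block directly onto $Y_f$ via (\ref{mainCV}); the underlying change of variables is identical, and your zero-extension observation handles the cells straddling $\partial\omega$ uniformly rather than requiring the paper's separate Poincar\'e remark for those cells.
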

\begin{proof}
We observe that the microscale of the porous medium $\varepsilon$ is similar than the thickness of the domain $\varepsilon$, which lead us to divide the domain $\Omega_\varepsilon$ in small cubes of lateral length $\varepsilon$ and vertical length $\varepsilon$. We consider the periodic cell $Y$. The function $\varphi \to \left(\|D \varphi \|^2_{L^2(Y_f)^{3\times 3}}+\|\varphi \|^2_{L^2(\partial T)^3} \right)^{1/2}$ is a norm in $H^1(Y_f)^3$, equivalent to the $H^1(Y_f)^3$-norm (see Ne${\rm \check c}$as \cite{Necas}). Therefore, for any function $\varphi(z)\in H^1(Y_f)^3$, we have
\begin{equation}\label{p1}
\int_{Y_f}\left\vert \varphi \right\vert ^2dz\leq C\left(\int_{Y_f}\left\vert D_{z}\varphi\right\vert ^2dz+\int_{\partial T}\left\vert \varphi\right\vert ^2d\sigma(z)\right),
\end{equation}
where the constant $C$ depends only on $Y_f$.

Then, for every $k^{\prime}\in \mathbb{Z}^2$, by the change of variable 
\begin{equation}\label{mainCV}
k^{\prime}+z^{\prime}=\frac{x^{\prime}}{{\varepsilon}}, \text{\ \ }z_3=\frac{x_3}{\varepsilon}, \text{\ \ }dz=\frac{dx}{\varepsilon^3},\text{\ \ }\partial_{z}={\varepsilon}\partial_{x}, \text{\ \ }d\sigma(x)=\varepsilon^2\,d\sigma(z),
\end{equation}
we rescale (\ref{p1}) from $Y_f$ to $Q_{f_{k^{\prime}},{\varepsilon}}=Y^{\prime}_{f_{k^{\prime}},{\varepsilon}}\times (0,\varepsilon)$. This yields that, for any function $\varphi(x)\in H^1(Q_{f_{k^{\prime}},{\varepsilon}})^3$, one has
\begin{eqnarray}\label{p2}
\int_{Q_{f_{k^{\prime}},{\varepsilon}}}\left\vert \varphi\right\vert ^2dx&\leq& C\left({\varepsilon}^2\int_{Q_{f_{k^{\prime}},{\varepsilon}}}\left\vert D_{x}\varphi\right\vert ^2dx+\varepsilon\int_{T^{\prime}_{k^{\prime},{\varepsilon}}\times (0,\varepsilon)}\left\vert \varphi\right\vert ^2d\sigma(x)\right),
\end{eqnarray}
with the same constant $C$ as in (\ref{p1}). Summing the inequality (\ref{p2}) for all the periods $Q_{f_{k^{\prime}},{\varepsilon}}$ and $T^{\prime}_{k^{\prime},{\varepsilon}}\times (0,\varepsilon)$, gives 
\begin{eqnarray*}
\int_{\Omega_\varepsilon}\left\vert \varphi\right\vert ^2dx&\leq& C\left({\varepsilon}^2\int_{\Omega_\varepsilon}\left\vert D_{x}\varphi\right\vert ^2dx+\varepsilon\int_{\partial S_\varepsilon}\left\vert \varphi\right\vert ^2d\sigma(x)\right).
\end{eqnarray*}
In fact, we must consider separately the periods containing a portion of $\partial \omega$, but they yield at a distance $O(\varepsilon)$ of $\partial \omega$, where $\varphi$ is zero. Therefore, using Poincare's inequality one can easily verify that in this part (\ref{conca_equation}) holds without considering the boundary term occuring in (\ref{conca_equation}).

\end{proof}
Considering the change of variables given in (\ref{dilatacion}) and taking into account that $d\sigma(x)=\varepsilon d\sigma(x')dy_3$, we obtain the following result for the domain $\widetilde \Omega_\varepsilon$.
\begin{corollary}\label{Poincare}
There exists a constant $C$ independent of $\varepsilon$, such that, for any function $\tilde \varphi\in \widetilde H_\varepsilon^3$, one has 
\begin{equation}\label{conca_equation1}
\left\Vert \tilde \varphi\right\Vert_{L^2(\widetilde \Omega_{\varepsilon})^3}\leq C\left(\varepsilon\left\Vert D_\varepsilon \tilde \varphi\right\Vert_{L^2(\widetilde \Omega_{\varepsilon})^{{3\times3}}}+\varepsilon^{1\over 2}\|\tilde \varphi \|_{L^2(\partial T_\varepsilon)^3}\right).
\end{equation}
\end{corollary}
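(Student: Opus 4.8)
The plan is to obtain Corollary~\ref{Poincare} directly from Lemma~\ref{Conca} by the change of variables $y_3 = x_3/\varepsilon$, exactly as indicated in the statement preceding it. Given $\tilde\varphi \in \widetilde H_\varepsilon^3$, define $\varphi(x',x_3) = \tilde\varphi(x',x_3/\varepsilon)$ on $\Omega_\varepsilon$; since $\tilde\varphi$ vanishes on $\partial\Omega$, the rescaled function $\varphi$ vanishes on $\partial\Lambda_\varepsilon$, so $\varphi \in H_\varepsilon^3$ and Lemma~\ref{Conca} applies to it. The remaining work is purely bookkeeping of Jacobian factors, so I would not belabour it.

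First I would track the three norms appearing in \eqref{conca_equation}. For the $L^2(\Omega_\varepsilon)$ norm of $\varphi$, the substitution $x_3 = \varepsilon y_3$ gives $dx = \varepsilon\, dx'dy_3$, hence $\|\varphi\|_{L^2(\Omega_\varepsilon)^3}^2 = \varepsilon\|\tilde\varphi\|_{L^2(\widetilde\Omega_\varepsilon)^3}^2$. For the gradient term, note that $\partial_{x_j}\varphi = \partial_{x_j}\tilde\varphi$ for $j=1,2$ while $\partial_{x_3}\varphi = \varepsilon^{-1}\partial_{y_3}\tilde\varphi$, so by the very definition of $D_\varepsilon$ in Section~\ref{SNot} one has $|D\varphi|(x',\varepsilon y_3) = |D_\varepsilon\tilde\varphi|(x',y_3)$ pointwise, and therefore $\|D\varphi\|_{L^2(\Omega_\varepsilon)^{3\times3}}^2 = \varepsilon\|D_\varepsilon\tilde\varphi\|_{L^2(\widetilde\Omega_\varepsilon)^{3\times3}}^2$. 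For the boundary term on $\partial S_\varepsilon$, since $S_\varepsilon = \bigcup_{k'} \overline T'_{k',\varepsilon}\times(0,\varepsilon)$ and $\partial S_\varepsilon$ is essentially the lateral surface $\partial T'_{k',\varepsilon}\times(0,\varepsilon)$, the surface measure transforms as $d\sigma(x) = \varepsilon\, d\sigma(x')dy_3$ (the factor $\varepsilon$ coming from the vertical rescaling, with $x'$ untouched), exactly the relation recorded after \eqref{CF-2}; hence $\|\varphi\|_{L^2(\partial S_\varepsilon)^3}^2 = \varepsilon\|\tilde\varphi\|_{L^2(\partial T_\varepsilon)^3}^2$.

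Substituting these three identities into \eqref{conca_equation} yields
\[
\varepsilon^{1/2}\|\tilde\varphi\|_{L^2(\widetilde\Omega_\varepsilon)^3} \le C\Big(\varepsilon\cdot\varepsilon^{1/2}\|D_\varepsilon\tilde\varphi\|_{L^2(\widetilde\Omega_\varepsilon)^{3\times3}} + \varepsilon^{1/2}\cdot\varepsilon^{1/2}\|\tilde\varphi\|_{L^2(\partial T_\varepsilon)^3}\Big),
\]
and dividing through by $\varepsilon^{1/2}$ gives precisely \eqref{conca_equation1}, with the same constant $C$ as in Lemma~\ref{Conca}, independent of $\varepsilon$.

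The only point that requires a little care — and which I would expect to be the sole genuine obstacle — is the transformation of the surface measure on $\partial S_\varepsilon$. One must check that the relevant portion of $\partial S_\varepsilon$ is the lateral (cylindrical) boundary, on which the outward normal is horizontal, so that under the anisotropic dilation $(x',x_3)\mapsto(x',\varepsilon y_3)$ the area element simply picks up the factor $\varepsilon$ from the $x_3$-direction while the horizontal arc-length $d\sigma(x')$ is unchanged; the top and bottom faces $x_3\in\{0,\varepsilon\}$ of the cylinders are part of $\partial\Lambda_\varepsilon$ rather than $\partial S_\varepsilon$, so they do not contribute. Once this is granted, the corollary follows immediately, and I would present it in essentially the three lines above.
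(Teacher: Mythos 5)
Your proof is correct and is exactly the argument the paper intends: the authors simply say ``Considering the change of variables given in~\rfb{dilatacion} and taking into account that $d\sigma(x)=\varepsilon\,d\sigma(x')dy_3$, we obtain the following result,'' and you have supplied precisely the omitted bookkeeping — the factor $\varepsilon$ from $dx=\varepsilon\,dx'dy_3$, the pointwise identity $D\varphi(x',\varepsilon y_3)=D_\varepsilon\tilde\varphi(x',y_3)$ coming from the definition of $D_\varepsilon$, and the factor $\varepsilon$ in the surface measure on the lateral boundary. Nothing to add.
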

The presence in (\ref{1}) of the stress tensor in the boundary condition implies that the extension of the velocity is no longer obvious. If we consider the Stokes system with Dirichlet boundary condition on the obstacles, the velocity would be extended by zero in the obstacles. However, in this case, we need another kind of extension for the case in which the velocity is non-zero on the obstacles.   Since in the extension required, the vertical variable is not concerned, therefore the proof of the required statement is basically the extension of the result given in Cioranescu and Saint-Jean Paulin \cite{CioSaint1,CioSaint2} to the time-depending case given in Cioranescu and Donato \cite{C-D-exact}, so we omit the proof. We remark that the extension is not divergence free, so we cannot expect the homogenized solution to be divergence free. Hence we cannot use test functions that are divergence free in the variational formulation, which implies that the pressure has to be included.
\begin{lemma}\label{lemma_extension}
There exists an extension operator $\varPi_\varepsilon\in \mathcal{L}(H_\varepsilon^3;H_0^1(\Lambda_\varepsilon)^3)$ and a positive constant $C$, independent of $\varepsilon$, such that 
\begin{equation*}
\varPi_\varepsilon \varphi(x)=\varphi (x), \quad \text{ if \ } x\in \Omega_\varepsilon,
\end{equation*}
\begin{equation*}
\|D\varPi_\varepsilon \varphi \|_{L^2(\Lambda_\varepsilon)^{3 \times 3}}\leq C\|D \varphi \|_{L^2(\Omega_\varepsilon)^{3 \times 3}}, \quad \forall\, \varphi \in H_\varepsilon^3.
\end{equation*}
\end{lemma}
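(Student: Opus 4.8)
The plan is to build $\varPi_\varepsilon$ cell by cell from one fixed extension operator on the reference cell, exploiting that the obstacles are cylinders in the $x_3$–direction so that only the horizontal variables are really involved. The first step is the reference result: there is a linear operator $P'$ taking $H^1(Y'_f)^3$ into $H^1(Y')^3$ (i.e.\ filling in the obstacle $T'$) with $P'f=f$ on $Y'_f$, which is bounded \emph{both} as an operator $L^2(Y'_f)^3\to L^2(Y')^3$ and in the homogeneous form $\|\nabla_{y'}P'f\|_{L^2(Y')}\le C\,\|\nabla_{y'}f\|_{L^2(Y'_f)}$. To obtain this I would start from a bounded linear extension operator $\widehat E$ for the Lipschitz set $Y'_f$ that is continuous simultaneously on $L^2$ and on $H^1$ (a Stein universal extension restricted to $Y'$, or a direct reflection construction across a collar of $\partial T'$), and then set $P'f:=\widehat E\big(f-\mathcal M_{Y'_f}[f]\big)+\mathcal M_{Y'_f}[f]$. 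Subtracting and adding the mean value is harmless for the $L^2$ bound, while the Poincaré--Wirtinger inequality on the connected set $Y'_f$ upgrades the $H^1$ bound on $\widehat E\big(f-\mathcal M_{Y'_f}[f]\big)$ into the stated pure gradient bound.

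Next, for $k'\in\mathcal K_\varepsilon$ I would define $\varPi_\varepsilon\varphi$ on the column $Y'_{k',\varepsilon}\times(0,\varepsilon)$ by rescaling the horizontal variables to the unit cell via $y'=x'/\varepsilon-k'$ and applying $P'$ slicewise in $x_3$, and I would set $\varPi_\varepsilon\varphi=\varphi$ on the part of $\Lambda_\varepsilon$ away from the obstacles (in particular on the boundary cells, where by the standing assumption no obstacle meets $\partial\omega$ and $\varphi$ can first be extended by zero across $\partial\omega$). Since $P'f=f$ on $Y'_f$, the two definitions agree on $\Omega_\varepsilon$, hence $\varPi_\varepsilon\varphi=\varphi$ there. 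Since $P'$ is linear, $P'[0]=0$; because $\varphi$ vanishes on $\omega\times\{0,\varepsilon\}$, its slices there are zero and so is $\varPi_\varepsilon\varphi$ on the top and bottom faces, while near $\partial\omega$ one is in $\Omega_\varepsilon$ with $\varphi=0$; thus $\varPi_\varepsilon\varphi$ vanishes on $\partial\Lambda_\varepsilon$. For the $H^1$–regularity: inside a column the $L^2\to L^2$ and $H^1\to H^1$ boundedness of $P'$ give, slicewise in $x_3$, a function in $H^1$ whose weak derivative $\partial_{x_3}(\varPi_\varepsilon\varphi)$ is the slicewise extension of $\partial_{x_3}\varphi$; across lateral interfaces the columns glue because $\varPi_\varepsilon\varphi-\varphi$ is supported in the obstacles $\overline{T'_{k',\varepsilon}}\times(0,\varepsilon)$, which stay strictly inside the cells as $\overline{T'}\subset\subset Y'$. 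Hence $\varPi_\varepsilon\varphi\in H^1_0(\Lambda_\varepsilon)^3$, and linearity and continuity of $\varPi_\varepsilon$ are clear.

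For the estimate I would split $D=(\nabla_{x'},\partial_{x_3})$. A planar Dirichlet integral $\int|\nabla_{x'}(\cdot)|^2$ over a cell of size $\varepsilon$ is scale invariant, so the gradient bound for $P'$ transfers to each rescaled cell with the same constant; integrating in $x_3\in(0,\varepsilon)$ and summing over $k'\in\mathcal K_\varepsilon$ yields $\|\nabla_{x'}\varPi_\varepsilon\varphi\|_{L^2(\Lambda_\varepsilon)}\le C\|\nabla_{x'}\varphi\|_{L^2(\Omega_\varepsilon)}$. Likewise a planar $L^2$–norm over a cell of size $\varepsilon$ scales by $\varepsilon^2$ on both sides, so the $L^2\to L^2$ bound of $P'$ transfers with the same constant, and applied to the slices $\partial_{x_3}\varphi(\cdot,x_3)$ it gives $\|\partial_{x_3}\varPi_\varepsilon\varphi\|_{L^2(\Lambda_\varepsilon)}\le C\|\partial_{x_3}\varphi\|_{L^2(\Omega_\varepsilon)}$. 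Adding the two pieces gives $\|D\varPi_\varepsilon\varphi\|_{L^2(\Lambda_\varepsilon)^{3\times 3}}\le C\|D\varphi\|_{L^2(\Omega_\varepsilon)^{3\times 3}}$ with $C$ independent of $\varepsilon$; no $L^2(\Lambda_\varepsilon)$–bound on $\varPi_\varepsilon\varphi$ itself is needed here, and if desired it follows from Poincaré's inequality in $H^1_0(\Lambda_\varepsilon)$.

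I expect the main obstacle to be pinning down the reference operator $P'$ with the two matched bounds at the same time — the gradient-controls-gradient bound, which makes the horizontal scaling exactly invariant so the constant is $\varepsilon$–free, together with the $L^2$-controls-$L^2$ bound, which is what makes the slicewise construction well defined and lets $\partial_{x_3}\varphi$ alone control $\partial_{x_3}(\varPi_\varepsilon\varphi)$. Relatedly, it is precisely the product (slicewise) nature of the construction that forces $\varPi_\varepsilon\varphi$ to vanish on the faces $x_3\in\{0,\varepsilon\}$, which a generic three-dimensional extension on the rescaled cell would not guarantee; keeping track of this, and of the boundary cells, is the only delicate bookkeeping, the rest being the standard cell-by-cell argument of Cioranescu--Saint-Jean Paulin and Cioranescu--Donato.
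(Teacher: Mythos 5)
Your proposal is correct and follows essentially the same route as the paper, which omits the detailed argument but attributes it to precisely the combination you reconstruct: the Cioranescu--Saint-Jean Paulin cell-by-cell extension on the horizontal variables, applied slicewise in $x_3$ as in the parametric (``time-dependent'') variant of Cioranescu--Donato, with the $L^2\to L^2$ reference-cell bound being exactly what makes $\partial_{x_3}\varphi$ control $\partial_{x_3}(\varPi_\varepsilon\varphi)$ with an $\varepsilon$-free constant.
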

Considering the change of variables given in (\ref{dilatacion}), we obtain the following result for the domain $\widetilde \Omega_\varepsilon$.
\begin{corollary}\label{corollary_para_Poincare}
There exists an extension operator $\widetilde \varPi_\varepsilon\in \mathcal{L}(\widetilde H_\varepsilon^3;H_0^1(\Omega)^3)$ and a positive constant $C$, independent of $\varepsilon$, such that 
\begin{equation*}
\widetilde \varPi_\varepsilon \tilde \varphi(x',y_3)=\tilde \varphi (x',y_3), \quad \text{ if \ } (x',y_3)\in \widetilde \Omega_\varepsilon,
\end{equation*}
\begin{equation*}
\|D_\varepsilon \widetilde \varPi_\varepsilon \tilde \varphi \|_{L^2(\Omega)^{3 \times 3}}\leq C\|D_\varepsilon  \tilde \varphi \|_{L^2(\widetilde \Omega_\varepsilon)^{3 \times 3}}, \quad \forall\, \tilde \varphi \in \widetilde H_\varepsilon^3.
\end{equation*}
\end{corollary}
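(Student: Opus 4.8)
The plan is to build $\widetilde\varPi_\varepsilon$ by conjugating the extension operator $\varPi_\varepsilon$ of Lemma~\ref{lemma_extension} with the dilatation $y_3=x_3/\varepsilon$ of (\ref{dilatacion}), and then to check that the factor $1/\varepsilon$ hidden in the vertical component of $D_\varepsilon$ cancels exactly against the Jacobian $dx_3=\varepsilon\,dy_3$, so that the resulting gradient estimate is $\varepsilon$-independent. The genuine content is entirely contained in Lemma~\ref{lemma_extension}; the corollary is just a change-of-variables bookkeeping argument.

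First I would undo the dilatation on the data: given $\tilde\varphi\in\widetilde H_\varepsilon^3$, set $\varphi(x',x_3)=\tilde\varphi(x',x_3/\varepsilon)$ for $(x',x_3)\in\Omega_\varepsilon$. Since $\tilde\varphi=0$ on $\partial\Omega$, one has $\varphi=0$ on $\partial\Lambda_\varepsilon$, i.e.\ $\varphi\in H_\varepsilon^3$, and a direct computation of partials gives, entry by entry, $(D\varphi)(x',x_3)=(D_\varepsilon\tilde\varphi)(x',x_3/\varepsilon)$; substituting $x_3=\varepsilon y_3$ yields
\begin{equation*}
\|D\varphi\|_{L^2(\Omega_\varepsilon)^{3\times3}}^2=\varepsilon\,\|D_\varepsilon\tilde\varphi\|_{L^2(\widetilde\Omega_\varepsilon)^{3\times3}}^2.
\end{equation*}
Then I would apply Lemma~\ref{lemma_extension} to $\varphi$, obtaining $\varPi_\varepsilon\varphi\in H_0^1(\Lambda_\varepsilon)^3$ with $\varPi_\varepsilon\varphi=\varphi$ on $\Omega_\varepsilon$ and $\|D\varPi_\varepsilon\varphi\|_{L^2(\Lambda_\varepsilon)}\le C\|D\varphi\|_{L^2(\Omega_\varepsilon)}$, and redilate: define $(\widetilde\varPi_\varepsilon\tilde\varphi)(x',y_3)=(\varPi_\varepsilon\varphi)(x',\varepsilon y_3)$ for $(x',y_3)\in\Omega=\omega\times(0,1)$. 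The dilatation maps $\Lambda_\varepsilon=\omega\times(0,\varepsilon)$ onto $\Omega$ and preserves the vanishing trace, so $\widetilde\varPi_\varepsilon\tilde\varphi\in H_0^1(\Omega)^3$; moreover $\widetilde\varPi_\varepsilon$ is linear, being a composition of linear maps. For $(x',y_3)\in\widetilde\Omega_\varepsilon$ one has $(x',\varepsilon y_3)\in\Omega_\varepsilon$, hence $(\widetilde\varPi_\varepsilon\tilde\varphi)(x',y_3)=\varphi(x',\varepsilon y_3)=\tilde\varphi(x',y_3)$, which is the required restriction identity.

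For the estimate, the same derivative computation read in the other direction gives, entry by entry, $(D_\varepsilon\widetilde\varPi_\varepsilon\tilde\varphi)(x',y_3)=(D\varPi_\varepsilon\varphi)(x',\varepsilon y_3)$, so that, substituting $x_3=\varepsilon y_3$,
\begin{equation*}
\|D_\varepsilon\widetilde\varPi_\varepsilon\tilde\varphi\|_{L^2(\Omega)^{3\times3}}^2=\frac1\varepsilon\,\|D\varPi_\varepsilon\varphi\|_{L^2(\Lambda_\varepsilon)^{3\times3}}^2\le\frac{C^2}{\varepsilon}\,\|D\varphi\|_{L^2(\Omega_\varepsilon)^{3\times3}}^2=C^2\,\|D_\varepsilon\tilde\varphi\|_{L^2(\widetilde\Omega_\varepsilon)^{3\times3}}^2,
\end{equation*}
which is the square of the claimed inequality, with $C$ independent of $\varepsilon$. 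Finally, since $\widetilde\varPi_\varepsilon\tilde\varphi\in H_0^1(\Omega)^3$ and $\varepsilon\le1$ one has $|\nabla(\widetilde\varPi_\varepsilon\tilde\varphi)|\le|D_\varepsilon\widetilde\varPi_\varepsilon\tilde\varphi|$ pointwise, so Poincaré's inequality on the fixed domain $\Omega$ combined with the displayed bound shows that $\widetilde\varPi_\varepsilon$ maps $\widetilde H_\varepsilon^3$ boundedly into $H_0^1(\Omega)^3$, i.e.\ $\widetilde\varPi_\varepsilon\in\mathcal{L}(\widetilde H_\varepsilon^3;H_0^1(\Omega)^3)$. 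The only point needing care is this matching of the $1/\varepsilon$ in the vertical slot of $D_\varepsilon$ with the $\varepsilon$ (resp.\ $1/\varepsilon$) coming from $dx_3=\varepsilon\,dy_3$; I do not expect any real obstacle, as these factors cancel precisely.
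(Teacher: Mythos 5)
Your proof is correct and follows precisely the route the paper intends: the paper states only that the corollary follows from Lemma~\ref{lemma_extension} via the change of variables (\ref{dilatacion}), and you have carried out that conjugation carefully, verifying that the factor $1/\varepsilon$ in the vertical slot of $D_\varepsilon$ cancels against the Jacobian of the dilatation so that the constant remains $\varepsilon$-independent. The final remark using $|\nabla\cdot|\le|D_\varepsilon\cdot|$ for $\varepsilon\le 1$ and Poincar\'e on $\Omega$ correctly establishes that $\widetilde\varPi_\varepsilon$ lands boundedly in $H_0^1(\Omega)^3$.
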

Using Corollary \ref{corollary_para_Poincare}, we obtain a Poincar\'e inequality in $\widetilde H_\varepsilon^3$.
\begin{corollary}
There exists a constant $C$ independent of $\varepsilon$, such that, for any function $\tilde \varphi\in \widetilde H_{\varepsilon}^3$, one has 
\begin{equation}\label{Poincare_delgada}
\left\Vert \tilde \varphi\right\Vert_{L^2(\widetilde \Omega_{\varepsilon})^3}\leq C\left\Vert D_\varepsilon \tilde \varphi\right\Vert_{L^2(\widetilde \Omega_{\varepsilon})^{{3\times3}}}.
\end{equation}
\end{corollary}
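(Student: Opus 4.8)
The plan is to derive the Poincaré inequality \eqref{Poincare_delgada} from Corollary \ref{corollary_para_Poincare} together with the classical Poincaré inequality on the fixed domain $\Omega=\omega\times(0,1)$. The key observation is that the extension operator $\widetilde\varPi_\varepsilon$ maps $\widetilde H_\varepsilon^3$ into $H_0^1(\Omega)^3$, so the extended function vanishes on the whole of $\partial\Omega$, and in particular on the top and bottom faces $\omega\times\{0\}$ and $\omega\times\{1\}$. Since the vertical variable $y_3$ ranges over the full interval $(0,1)$, one can integrate along vertical segments and exploit this vanishing to control the $L^2$ norm by the vertical derivative; this is exactly the mechanism that produces the ``small constant'' $\varepsilon$ hidden in $D_\varepsilon$, without needing any factor of $\varepsilon$ from the horizontal directions.

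Concretely, first I would fix $\tilde\varphi\in\widetilde H_\varepsilon^3$ and set $w=\widetilde\varPi_\varepsilon\tilde\varphi\in H_0^1(\Omega)^3$. Since $w$ vanishes on $\omega\times\{0\}$, for a.e.\ $(x',y_3)\in\Omega$ we may write $w(x',y_3)=\int_0^{y_3}\partial_{s}w(x',s)\,ds$, and Cauchy--Schwarz together with integration over $\Omega$ gives
\begin{equation*}
\|w\|_{L^2(\Omega)^3}\leq C\,\|\partial_{y_3}w\|_{L^2(\Omega)^3}\leq C\,\varepsilon\,\|D_\varepsilon w\|_{L^2(\Omega)^{3\times 3}},
\end{equation*}
where in the last step we used that the $(i,3)$-entries of $D_\varepsilon w$ equal $\varepsilon^{-1}\partial_{y_3}w_i$, so $\|\partial_{y_3}w\|_{L^2(\Omega)^3}=\varepsilon\,\|\varepsilon^{-1}\partial_{y_3}w\|_{L^2(\Omega)^3}\leq\varepsilon\,\|D_\varepsilon w\|_{L^2(\Omega)^{3\times 3}}$. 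Then I would restrict back to $\widetilde\Omega_\varepsilon$, using that $w=\tilde\varphi$ there, to get $\|\tilde\varphi\|_{L^2(\widetilde\Omega_\varepsilon)^3}\leq\|w\|_{L^2(\Omega)^3}$, and finally invoke the norm bound from Corollary \ref{corollary_para_Poincare}, namely $\|D_\varepsilon w\|_{L^2(\Omega)^{3\times 3}}=\|D_\varepsilon\widetilde\varPi_\varepsilon\tilde\varphi\|_{L^2(\Omega)^{3\times 3}}\leq C\,\|D_\varepsilon\tilde\varphi\|_{L^2(\widetilde\Omega_\varepsilon)^{3\times 3}}$. Chaining these three inequalities yields \eqref{Poincare_delgada} with a constant independent of $\varepsilon$.

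There is no serious obstacle here; the lemma is essentially a bookkeeping exercise once Corollary \ref{corollary_para_Poincare} is in hand. The only point that requires a little care is making sure the constant is genuinely $\varepsilon$-independent: this is guaranteed because the one-dimensional Poincaré inequality on $(0,1)$ has a fixed constant, and the extension operator's bound is $\varepsilon$-uniform by construction. One could alternatively phrase the first step as the standard Poincaré inequality on $H_0^1(\Omega)^3$ applied to $w$, giving $\|w\|_{L^2(\Omega)^3}\leq C\|\nabla w\|_{L^2(\Omega)^{3\times 3}}$, and then note $\|\nabla w\|_{L^2(\Omega)^{3\times 3}}\leq C\|D_\varepsilon w\|_{L^2(\Omega)^{3\times 3}}$ since $\varepsilon\leq 1$; but the vertical-segment argument is cleaner because it directly exposes where the smallness comes from and avoids wasting the factor $\varepsilon$. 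I would present the vertical-segment version.
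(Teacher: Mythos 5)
Your proof is correct and runs along essentially the same line as the paper's: pass to the extension $w=\widetilde\varPi_\varepsilon\tilde\varphi\in H_0^1(\Omega)^3$, apply a Poincar\'e-type inequality on the fixed domain $\Omega$, and return to $\widetilde\Omega_\varepsilon$ via the operator bound in Corollary \ref{corollary_para_Poincare}. The difference is in the middle step: the paper simply applies the standard Poincar\'e inequality $\|w\|_{L^2(\Omega)}\leq C\|\nabla w\|_{L^2(\Omega)}$ and then throws away information by noting $\|\nabla w\|\leq\|D_\varepsilon w\|$ (componentwise, because the $(i,3)$-entries of $D_\varepsilon$ dominate those of $\nabla$ when $\varepsilon\leq 1$) --- exactly the ``alternative'' you describe at the end. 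Your vertical-segment argument instead proves the strictly stronger estimate
\begin{equation*}
\|\tilde\varphi\|_{L^2(\widetilde\Omega_\varepsilon)^3}\leq C\,\varepsilon\,\|D_\varepsilon\tilde\varphi\|_{L^2(\widetilde\Omega_\varepsilon)^{3\times 3}},
\end{equation*}
which certainly implies \eqref{Poincare_delgada}. This is a genuinely sharper statement, and it is the version the paper in fact uses implicitly later: in Step 3 of the compactness lemma it deduces $\|\tilde U_\varepsilon\|_{L^2(\Omega)^3}\leq\|\partial_{y_3}\tilde U_\varepsilon\|_{L^2(\Omega)^3}$ from the Dirichlet condition, which is precisely your vertical Poincar\'e. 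One further small remark: since any $\tilde\varphi\in\widetilde H_\varepsilon^3$ already vanishes on $\omega_\varepsilon\times\{0\}$, your vertical-segment step could be carried out directly on $\widetilde\Omega_\varepsilon$ without passing through $w$ at all, making the extension operator superfluous for this particular corollary; it is only needed to pass constants from $\Omega$ back to $\widetilde\Omega_\varepsilon$ in the paper's version (where the constant comes from the Poincar\'e inequality on $\Omega$ rather than from a domain-independent one-dimensional estimate). Either way, the constant is manifestly $\varepsilon$-independent, as you observe.
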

\begin{proof}
We observe that 
\begin{equation}\label{4.9_1}
\|\tilde \varphi \|_{L^2(\widetilde \Omega_\varepsilon)^3}\leq \|\widetilde \varPi_\varepsilon \tilde \varphi \|_{L^2( \Omega)^3},\quad \forall\, \tilde \varphi \in \widetilde H_\varepsilon^3.
\end{equation}
Since $\widetilde \varPi_\varepsilon \tilde \varphi \in H_0^1(\Omega)^3$, we can apply the Poincar\'e inequality in $H_0^1(\Omega)$ and then taking into account Corollary \ref{corollary_para_Poincare}, we get
\begin{equation*}
\|\widetilde \varPi_\varepsilon \tilde \varphi \|_{L^2( \Omega)^3}\leq C\|D \widetilde \varPi_\varepsilon \tilde \varphi \|_{L^2( \Omega)^{3\times 3}}\leq C\|D_\varepsilon \widetilde \varPi_\varepsilon \tilde \varphi \|_{L^2( \Omega)^{3\times 3}}\leq C\|D_\varepsilon  \tilde \varphi \|_{L^2(\widetilde \Omega_\varepsilon)^{3\times 3}}.
\end{equation*}
This together with (\ref{4.9_1}) gives (\ref{Poincare_delgada}).
\end{proof}

Now, for the proof of the {\it a priori} estimates for the pressure, we also need the following lemma due to Conca \cite{Conca} generalized to a thin domain $\Omega_\varepsilon$.
\begin{lemma}\label{Conca_pressure}
There exists a constant $C$ independent of $\varepsilon$, such that, for each $q\in L^2(\Omega_{\varepsilon})$, there exists $\varphi=\varphi(q)\in H_{\varepsilon}$, such that
\begin{equation}\label{conca_pressure1}
{\rm div}\, \varphi=q \ \text{in } \Omega_\varepsilon,
\end{equation}
\begin{equation}\label{conca_pressure2}
\|\varphi \|_{L^2(\Omega_\varepsilon)^3}\leq C\, \| q \|_{L^2(\Omega_\varepsilon)},\quad \|D \varphi \|_{L^2(\Omega_\varepsilon)^{3\times 3}}\leq {C \over \varepsilon} \| q \|_{L^2(\Omega_\varepsilon)}.
\end{equation}
\end{lemma}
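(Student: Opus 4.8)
The plan is to construct $\varphi(q)$ by gluing together local corrections on each fluid cell $Q_{f_{k'},\varepsilon}=Y'_{f_{k'},\varepsilon}\times(0,\varepsilon)$, exactly in the spirit of the classical Bogovskii-type construction used by Conca for fixed porous media, and then to keep careful track of the $\varepsilon$-powers coming from the anisotropic scaling of $\Omega_\varepsilon$ (lateral length $\varepsilon$, vertical length $\varepsilon$). First I would recall the reference result on the unit fluid cell: since $Y_f$ is a fixed Lipschitz domain, there is a constant $C_0=C_0(Y_f)$ such that for every $h\in L^2(Y_f)$ with zero mean there exists $\psi\in H^1_0(Y_f)^3$ with ${\rm div}_z\,\psi=h$ and $\|\psi\|_{H^1(Y_f)^3}\le C_0\|h\|_{L^2(Y_f)}$; because $\psi$ vanishes on $\partial Y_f$ (in particular on $\partial T$ and on $\partial Y'\times(0,1)$), its periodic extension across cell faces is admissible.

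Next I would handle the mean-value obstruction. Given $q\in L^2(\Omega_\varepsilon)$, on each cell write $q=\mathcal{M}_{Q_{f_{k'},\varepsilon}}[q]+\big(q-\mathcal{M}_{Q_{f_{k'},\varepsilon}}[q]\big)$. The oscillatory part has zero mean on each cell, so rescaling the unit-cell operator via the change of variables $x'=\varepsilon(k'+z')$, $x_3=\varepsilon z_3$ (the same as in \rfb{mainCV}, under which $\partial_z=\varepsilon\partial_x$, $dz=dx/\varepsilon^3$) produces, cell by cell, a function $\varphi_1\in H_\varepsilon^3$ with ${\rm div}\,\varphi_1=q-\sum_{k'}\mathcal{M}_{Q_{f_{k'},\varepsilon}}[q]\chi_{Q_{f_{k'},\varepsilon}}$ and the scaling-corrected bounds $\|\varphi_1\|_{L^2(\Omega_\varepsilon)^3}\le C\|q\|_{L^2(\Omega_\varepsilon)}$, $\|D\varphi_1\|_{L^2(\Omega_\varepsilon)^{3\times3}}\le \frac{C}{\varepsilon}\|q\|_{L^2(\Omega_\varepsilon)}$. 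These exponents are exactly what one reads off from $D_z=\varepsilon D_x$ together with the $L^2$ volume factor $\varepsilon^{3/2}$. The piecewise-constant remainder $\bar q:=\sum_{k'}\mathcal{M}_{Q_{f_{k'},\varepsilon}}[q]\chi_{Q_{f_{k'},\varepsilon}}$ then has to be matched by a second field $\varphi_2\in H_\varepsilon^3$ with ${\rm div}\,\varphi_2=\bar q$; since $\|\bar q\|_{L^2(\Omega_\varepsilon)}\le\|q\|_{L^2(\Omega_\varepsilon)}$ this is a lower-frequency problem, solved on the full thin slab $\Lambda_\varepsilon=\omega\times(0,\varepsilon)$ by one more anisotropic rescaling ($x_3=\varepsilon z_3$ only), using a Bogovskii operator on the fixed slab $\omega\times(0,1)$; this contributes $\|\varphi_2\|_{L^2}\le C\|q\|_{L^2(\Omega_\varepsilon)}$ and, because only the vertical direction is squeezed, $\|D\varphi_2\|_{L^2}\le\frac{C}{\varepsilon}\|q\|_{L^2(\Omega_\varepsilon)}$. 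Setting $\varphi:=\varphi_1+\varphi_2$ gives \rfb{conca_pressure1}--\rfb{conca_pressure2}; note $\varphi\in H_\varepsilon$ since $\varphi_1$ is supported away from $\partial\Lambda_\varepsilon$ on the interior cells and $\varphi_2$ vanishes on $\partial\Lambda_\varepsilon$, while near $\partial\omega$ one argues as in the proof of Lemma \ref{Conca} that the cells touching the boundary sit at distance $O(\varepsilon)$ from it and cause no extra loss.

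The main obstacle is bookkeeping the $\varepsilon$-powers consistently and, more delicately, the treatment of the boundary cells: the cells $Y'_{k',\varepsilon}$ meeting $\partial\omega$ are only partially inside $\omega_\varepsilon$, so the zero-mean splitting and the unit-cell operator do not apply verbatim there. The standard fix, which I would invoke, is that the fluid part of these partial cells is uniformly Lipschitz with constants independent of $\varepsilon$ (by smoothness of $\partial\omega$), so a Bogovskii operator with $\varepsilon$-uniform norm still exists on them after rescaling; alternatively one absorbs their contribution into the global $\varphi_2$ on $\Lambda_\varepsilon$. I expect everything else — the density reduction to smooth $q$, the assembly of cellwise fields into a single $H^1$ function (automatic since each piece vanishes on the relevant cell boundary), and the final triangle inequality — to be routine. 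One should also check the compatibility $\int_{\Omega_\varepsilon}q\,dx$ need not vanish here because $\varphi$ is only required to lie in $H_\varepsilon$, not in $H^1_0(\Omega_\varepsilon)^3$; the Dirichlet condition is imposed only on $\partial\Lambda_\varepsilon$, so flux can escape through $\partial S_\varepsilon$, and indeed $\varphi_2$ can be taken with nonzero normal trace on $\partial S_\varepsilon$, which is why no mean-zero hypothesis on $q$ is needed.
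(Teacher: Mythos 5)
Your proposal takes a genuinely different route from the paper's, and while the spirit is sound, there is one concrete gap in the construction of $\varphi_2$. You observe correctly that the Bogovskii operator on the rescaled slab $\omega\times(0,1)$ requires zero mean, and you remark (correctly) that no mean-zero hypothesis on $q$ is needed precisely because flux may escape through $\partial S_\varepsilon$; but you do not actually explain how to exploit that freedom when applying the slab operator to the piecewise-constant remainder $\bar q$. The missing step is precisely the trick the paper uses: extend the data into the obstacle region $\Lambda_\varepsilon\setminus\Omega_\varepsilon$ by the constant $-\big(\int_{\Omega_\varepsilon}q\,dx\big)/|\Lambda_\varepsilon\setminus\Omega_\varepsilon|$, which makes the extension $Q$ belong to $L^2_0(\Lambda_\varepsilon)$ with $\|Q\|_{L^2(\Lambda_\varepsilon)}\le C\|q\|_{L^2(\Omega_\varepsilon)}$ (using that $|\Lambda_\varepsilon\setminus\Omega_\varepsilon|$ is bounded below uniformly in $\varepsilon$); the slab Bogovskii operator then produces $\varphi\in H_0^1(\Lambda_\varepsilon)^3$, and its \emph{restriction} to $\Omega_\varepsilon$ lies in $H_\varepsilon$ with nonzero trace on $\partial S_\varepsilon$, which is the mechanism you were gesturing at. Without this modification, "absorbing the contribution into $\varphi_2$" is not a construction.

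Beyond this gap, your argument is more elaborate than necessary. The paper extends $q$ itself (not its cellwise mean) by the constant above and then invokes a single black-box result — Maru\v{s}i\'c and Maru\v{s}i\'c--Paloka's Lemma 20 — which already delivers $\varphi\in H_0^1(\Lambda_\varepsilon)^3$ with $\mathrm{div}\,\varphi=Q$, $\|\varphi\|_{L^2(\Lambda_\varepsilon)^3}\le C\|Q\|_{L^2(\Lambda_\varepsilon)}$ and $\|D\varphi\|_{L^2(\Lambda_\varepsilon)^{3\times 3}}\le (C/\varepsilon)\|Q\|_{L^2(\Lambda_\varepsilon)}$; restricting to $\Omega_\varepsilon$ then gives the claim with no cell-by-cell decomposition at all. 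In particular the $C/\varepsilon$ loss in the derivative comes entirely from the vertical squeezing of the slab, not from the periodic microstructure — consistent with your own scaling count, which shows that the cellwise correction $\varphi_1$ actually satisfies the \emph{better} bound $\|D\varphi_1\|_{L^2}\le C\|q\|_{L^2}$, making the splitting into $\varphi_1+\varphi_2$ redundant. So: fix the mean issue by the constant-extension device, or better, drop the cellwise step entirely and apply the thin-slab lemma to the full extended $Q$.
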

\begin{proof}
Let $q \in L^2(\Omega_\varepsilon)$ be given. We extend $q$ inside the cylinders by means of the function:
$$Q(x)=\left\{
\begin{array}{rcl}
q(x) &\text{if}& x\in \Omega_\varepsilon\\
\noame 
\displaystyle{-1 \over |\Lambda_\varepsilon-\Omega_\varepsilon|}\int_{\Omega_\varepsilon}q(x) \,dx &\text{if}& x\in \Lambda_\varepsilon-\Omega_\varepsilon.
\end{array}\right.
$$
It is follows that $Q \in L^2_0(\Lambda_\varepsilon)=\{p\in L^2(\Lambda_\varepsilon)\,:\, \int_{\Lambda_\varepsilon}p\,dx=0\}$ and
\begin{equation}\label{5.9b}
\|Q\|_{L^2(\Lambda_\varepsilon)}^2=\|q\|_{L^2(\Omega_\varepsilon)}^2+{1\over |\Lambda_\varepsilon-\Omega_\varepsilon|}\left(\int_{\Omega_\varepsilon}q(x)\, dx\right)^2.
\end{equation}
Since $|\Lambda_\varepsilon-\Omega_\varepsilon|$ is bounded from below by a positive number, it follows from (\ref{5.9b}) and Cauchy-Schwartz inequality that
\begin{equation}\label{5.10}
\|Q\|_{L^2(\Lambda_\varepsilon)}\leq C\|q\|_{L^2(\Omega_\varepsilon)}.
\end{equation}
Besides that, since $Q \in L^2_0(\Lambda_\varepsilon)$, it follows from Maru${\rm \check s}$i\'c and Maru${\rm \check s}$i\'c-Paloka (Lemma 20 in \cite{Marusic}) that we can find $\varphi \in H_0^1(\Lambda_\varepsilon)^3$ such that
\begin{equation}\label{5.11a}
{\rm div}\, \varphi=Q \ \text{\ in \ }\Lambda_\varepsilon,
\end{equation}
\begin{equation}\label{5.11b}
\|  \varphi \|_{L^2(\Lambda_\varepsilon)^3}\leq C\, \| Q \|_{L^2(\Lambda_\varepsilon)},\quad \| D \varphi \|_{L^2(\Lambda_\varepsilon)^{3\times 3}}\leq {C \over \varepsilon}\| Q \|_{L^2(\Lambda_\varepsilon)}.
\end{equation}
Let us consider $\varphi_{|\Omega_\varepsilon}$: it belongs to $H_{\varepsilon}$. Moreover, (\ref{conca_pressure1}) follows from (\ref{5.11a}) and the estimates (\ref{conca_pressure2}) follows from (\ref{5.11b}) and (\ref{5.10}). 

\end{proof}
Considering the change of variables given in (\ref{dilatacion}), we obtain the following result for the domain $\widetilde \Omega_\varepsilon$.
\begin{corollary}\label{Conca_pressure2}
There exists a constant $C$ independent of $\varepsilon$, such that, for each $\tilde q\in L^2(\widetilde\Omega_{\varepsilon})$, there exists $\tilde \varphi=\tilde \varphi(\tilde q)\in \widetilde H_{\varepsilon}$, such that
\begin{equation*}\label{conca_pressure1tilde}
{\rm div}_{\varepsilon}\, \tilde \varphi=\tilde q \ \text{in } \widetilde \Omega_\varepsilon,
\end{equation*}
\begin{equation*}\label{conca_pressure2tilde}
\|\tilde \varphi \|_{L^2(\widetilde \Omega_\varepsilon)^3}\leq C\, \| \tilde q \|_{L^2(\widetilde \Omega_\varepsilon)},\quad \|D_\varepsilon \tilde \varphi \|_{L^2(\widetilde \Omega_\varepsilon)^{3\times 3}}\leq {C \over \varepsilon} \| \tilde q \|_{L^2(\widetilde\Omega_\varepsilon)}.
\end{equation*}
\end{corollary}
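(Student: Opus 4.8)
The plan is to deduce Corollary \ref{Conca_pressure2} from Lemma \ref{Conca_pressure} by transporting everything through the dilatation (\ref{dilatacion}), exactly as $\tilde u_\varepsilon$, $\tilde p_\varepsilon$ were obtained from $u_\varepsilon$, $p_\varepsilon$. Given $\tilde q\in L^2(\widetilde\Omega_\varepsilon)$, first I would introduce its undilated counterpart $q\in L^2(\Omega_\varepsilon)$ defined by $q(x',x_3)=\tilde q(x',x_3/\varepsilon)$; the change of variables $x_3=\varepsilon y_3$, $dx_3=\varepsilon\,dy_3$, gives the identity $\|q\|_{L^2(\Omega_\varepsilon)}^2=\varepsilon\,\|\tilde q\|_{L^2(\widetilde\Omega_\varepsilon)}^2$. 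Applying Lemma \ref{Conca_pressure} to $q$ produces $\varphi=\varphi(q)\in H_\varepsilon$ with ${\rm div}\,\varphi=q$ in $\Omega_\varepsilon$, $\|\varphi\|_{L^2(\Omega_\varepsilon)^3}\leq C\|q\|_{L^2(\Omega_\varepsilon)}$ and $\|D\varphi\|_{L^2(\Omega_\varepsilon)^{3\times 3}}\leq (C/\varepsilon)\|q\|_{L^2(\Omega_\varepsilon)}$.

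Next I would set $\tilde\varphi(x',y_3)=\varphi(x',\varepsilon y_3)$ for a.e. $(x',y_3)\in\widetilde\Omega_\varepsilon$ and verify the three required properties. For the constraint, note that $\partial_{x_j}\tilde\varphi_i(x',y_3)=(\partial_{x_j}\varphi_i)(x',\varepsilon y_3)$ for $j=1,2$ while $\partial_{y_3}\tilde\varphi_3(x',y_3)=\varepsilon(\partial_{x_3}\varphi_3)(x',\varepsilon y_3)$, so the factor $1/\varepsilon$ built into ${\rm div}_\varepsilon$ is absorbed and
\[
{\rm div}_\varepsilon\,\tilde\varphi(x',y_3)={\rm div}_{x'}\tilde\varphi'(x',y_3)+\frac{1}{\varepsilon}\partial_{y_3}\tilde\varphi_3(x',y_3)=({\rm div}\,\varphi)(x',\varepsilon y_3)=q(x',\varepsilon y_3)=\tilde q(x',y_3).
\]
For the norms, the same change of variables yields $\|\tilde\varphi\|_{L^2(\widetilde\Omega_\varepsilon)^3}^2=\varepsilon^{-1}\|\varphi\|_{L^2(\Omega_\varepsilon)^3}^2$; and since $(D_\varepsilon\tilde\varphi)_{i,j}(x',y_3)=(D_x\varphi)_{i,j}(x',\varepsilon y_3)$ for all $i$ and $j=1,2$, while $(D_\varepsilon\tilde\varphi)_{i,3}(x',y_3)=\varepsilon^{-1}\partial_{y_3}\tilde\varphi_i(x',y_3)=(\partial_{x_3}\varphi_i)(x',\varepsilon y_3)=(D_x\varphi)_{i,3}(x',\varepsilon y_3)$, one also gets $\|D_\varepsilon\tilde\varphi\|_{L^2(\widetilde\Omega_\varepsilon)^{3\times 3}}^2=\varepsilon^{-1}\|D\varphi\|_{L^2(\Omega_\varepsilon)^{3\times 3}}^2$. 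Combining these with the estimates of Lemma \ref{Conca_pressure} and $\|q\|_{L^2(\Omega_\varepsilon)}^2=\varepsilon\|\tilde q\|_{L^2(\widetilde\Omega_\varepsilon)}^2$ gives $\|\tilde\varphi\|_{L^2(\widetilde\Omega_\varepsilon)^3}\leq C\|\tilde q\|_{L^2(\widetilde\Omega_\varepsilon)}$ and $\|D_\varepsilon\tilde\varphi\|_{L^2(\widetilde\Omega_\varepsilon)^{3\times 3}}\leq (C/\varepsilon)\|\tilde q\|_{L^2(\widetilde\Omega_\varepsilon)}$ with the same constant $C$, since the factors $\varepsilon^{1/2}$ coming from $dx_3=\varepsilon\,dy_3$ cancel identically on both sides of each inequality.

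Finally I would check membership $\tilde\varphi\in\widetilde H_\varepsilon$: the dilatation (\ref{dilatacion}) maps $\Lambda_\varepsilon=\omega\times(0,\varepsilon)$ onto $\Omega=\omega\times(0,1)$, hence $\partial\Lambda_\varepsilon$ onto $\partial\Omega$, so the vanishing trace $\varphi=0$ on $\partial\Lambda_\varepsilon$ (which holds because $\varphi\in H_\varepsilon$) transports to $\tilde\varphi=0$ on $\partial\Omega$, and $\tilde\varphi\in H^1(\widetilde\Omega_\varepsilon)$ is immediate. There is no genuine obstacle here; the only points requiring care are that the vertical rescaling $y_3=x_3/\varepsilon$ is precisely the one that turns the isotropic divergence into the anisotropic operator ${\rm div}_\varepsilon$ — so the constraint ${\rm div}\,\varphi=q$ becomes ${\rm div}_\varepsilon\,\tilde\varphi=\tilde q$ with no change of constant — and the bookkeeping of the single factor $\varepsilon^{1/2}$ in the $L^2$ norms. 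This completes the proof of the corollary.
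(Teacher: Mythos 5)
Your argument is correct and is precisely what the paper intends: the paper omits the proof, remarking only that the corollary follows from Lemma \ref{Conca_pressure} by "considering the change of variables given in (\ref{dilatacion})," and your write-up simply spells out that transport — the identity $(D_\varepsilon\tilde\varphi)(x',y_3)=(D\varphi)(x',\varepsilon y_3)$, the conversion of ${\rm div}$ into ${\rm div}_\varepsilon$, and the cancelling $\varepsilon^{1/2}$ factors in the $L^2$ norms. No gaps; the bookkeeping is exactly right.
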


\paragraph*{A priori estimates for ($\tilde{u}_{\varepsilon}$, $\tilde{p}_{\varepsilon}$) in $\widetilde \Omega_\varepsilon$:} first, let us obtain some {\it a priori} estimates for $\tilde{u}_{\varepsilon}$ for different values of $\gamma$.
\begin{lemma}\label{Lemma_a3} We distinguish three cases:
\begin{itemize}
\item[i) ] for $\gamma<-1$, then there exists a constant $C$ independent of $\varepsilon$, such that
\begin{equation}\label{a3}
\|\tilde u_\varepsilon\|_{L^2(\widetilde \Omega_\varepsilon)^3}\leq C\varepsilon,\quad \|D_\varepsilon \tilde u_\varepsilon\|_{L^2(\widetilde \Omega_\varepsilon)^{3\times 3}}\leq C\,.
\end{equation}
\item[ii) ] for $-1\leq\gamma<1$, then there exists a constant $C$ independent of $\varepsilon$, such that
\begin{equation}\label{a3-2}
\| \tilde u_\varepsilon\|_{L^2(\widetilde \Omega_\varepsilon)^3}\leq C\varepsilon^{-\gamma},\quad \|D_\varepsilon \tilde u_\varepsilon\|_{L^2(\widetilde \Omega_\varepsilon)^{3\times 3}}\leq C\varepsilon^{-{1+\gamma\over 2}}\,.
\end{equation}
\item[iii) ] for $\gamma\geq 1$, then there exists a constant $C$ independent of $\varepsilon$, such that
\begin{equation}\label{a3-3}
\| \tilde u_\varepsilon\|_{L^2(\widetilde \Omega_\varepsilon)^3}\leq C\varepsilon^{-1},\quad \|D_\varepsilon \tilde u_\varepsilon\|_{L^2(\widetilde \Omega_\varepsilon)^{3\times 3}}\leq C\varepsilon^{-1}\,.
\end{equation}
\end{itemize}
\end{lemma}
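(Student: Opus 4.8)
The natural route is to produce an energy identity by using $\tilde u_\varepsilon$ itself as a test function in the first line of the variational formulation~(\ref{fv2}), and then to close the $\gamma$-dependent estimates by combining this identity with the surface-integral bound~(\ref{gvestimate}) and with the Poincaré-type inequalities established above, choosing in each regime the version of the Poincaré inequality adapted to the strength of the boundary penalization $\varepsilon^\gamma$.

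\textbf{The energy identity and the master inequality.} Taking $\tilde\varphi=\tilde u_\varepsilon\in\widetilde H_\varepsilon^3$ in~(\ref{fv2})$_1$ and observing that ${\rm div}_\varepsilon\tilde u_\varepsilon=0$ in $\widetilde\Omega_\varepsilon$ — this is exactly~(\ref{2})$_2$, inherited from the incompressibility condition~(\ref{1})$_2$ after the dilation~(\ref{dilatacion}) — the pressure term disappears and one obtains
$$\mu\,\|D_\varepsilon\tilde u_\varepsilon\|_{L^2(\widetilde\Omega_\varepsilon)^{3\times3}}^2+\alpha\varepsilon^\gamma\|\tilde u_\varepsilon\|_{L^2(\partial T_\varepsilon)^3}^2=\int_{\widetilde\Omega_\varepsilon}f'_\varepsilon\cdot\tilde u'_\varepsilon\,dx'dy_3+\int_{\partial T_\varepsilon}g'_\varepsilon\cdot\tilde u'_\varepsilon\,d\sigma(x')dy_3.$$
By~(\ref{Hf}) the sequence $\varepsilon f'_\varepsilon$ is bounded in $L^2(\omega)^2$, and since $f'_\varepsilon$ is $y_3$-independent this gives $\|f'_\varepsilon\|_{L^2(\widetilde\Omega_\varepsilon)^3}\le C\varepsilon^{-1}$, so the body-force term is $\le C\varepsilon^{-1}\|\tilde u_\varepsilon\|_{L^2(\widetilde\Omega_\varepsilon)^3}$. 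For the surface term I apply~(\ref{gvestimate}) with $g=g'$, bounding it by $C\varepsilon^{-1}\bigl(\|\tilde u_\varepsilon\|_{L^2(\widetilde\Omega_\varepsilon)^3}+\varepsilon\|D_\varepsilon\tilde u_\varepsilon\|_{L^2(\widetilde\Omega_\varepsilon)^{3\times3}}\bigr)$. Abbreviating $a=\|D_\varepsilon\tilde u_\varepsilon\|_{L^2(\widetilde\Omega_\varepsilon)^{3\times3}}$, $b=\|\tilde u_\varepsilon\|_{L^2(\partial T_\varepsilon)^3}$, $c=\|\tilde u_\varepsilon\|_{L^2(\widetilde\Omega_\varepsilon)^3}$, the identity yields the master inequality $\mu a^2+\alpha\varepsilon^\gamma b^2\le C\varepsilon^{-1}c+Ca$.

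\textbf{The three regimes.} For $\gamma\ge1$ the boundary penalty is too weak to be useful, so I discard the nonnegative term $\alpha\varepsilon^\gamma b^2$ and use the plain Poincaré inequality~(\ref{Poincare_delgada}), $c\le Ca$: the master inequality becomes $\mu a^2\le C\varepsilon^{-1}a$ for small $\varepsilon$, hence $a\le C\varepsilon^{-1}$ and then $c\le C\varepsilon^{-1}$, which is~(\ref{a3-3}). For $\gamma<1$ I instead use the finer trace–Poincaré inequality~(\ref{conca_equation1}), $c\le C(\varepsilon a+\varepsilon^{1/2}b)$; substituting it in the master inequality gives $\mu a^2+\alpha\varepsilon^\gamma b^2\le Ca+C\varepsilon^{-1/2}b$, and Young's inequality absorbs $Ca$ into $\tfrac\mu2a^2$ and $C\varepsilon^{-1/2}b$ into $\tfrac\alpha2\varepsilon^\gamma b^2$, leaving $\tfrac\mu2a^2+\tfrac\alpha2\varepsilon^\gamma b^2\le C+C\varepsilon^{-1-\gamma}$. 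When $\gamma<-1$ the exponent $-1-\gamma>0$, so the right-hand side is $\le C$, whence $a\le C$ and $b\le C\varepsilon^{-\gamma/2}$; reinserting these in~(\ref{conca_equation1}) gives $c\le C(\varepsilon+\varepsilon^{(1-\gamma)/2})\le C\varepsilon$, i.e.~(\ref{a3}). When $-1\le\gamma<1$ the right-hand side is $\le C\varepsilon^{-1-\gamma}$, so $a\le C\varepsilon^{-(1+\gamma)/2}$ and $b\le C\varepsilon^{-(1+2\gamma)/2}$; reinserting in~(\ref{conca_equation1}) and noting that $\varepsilon^{-\gamma}$ is the dominant power yields $c\le C\varepsilon^{-\gamma}$, i.e.~(\ref{a3-2}).

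\textbf{Main obstacle.} Once the surface estimate~(\ref{gvestimate}) and the two Poincaré inequalities are available, the calculations are elementary; the only delicate point is that no single choice of Poincaré inequality works for all three regimes. One must use~(\ref{Poincare_delgada}) for $\gamma\ge1$ — where the trace term is irrelevant and using~(\ref{conca_equation1}) would produce the strictly worse bound $\varepsilon^{-(1+\gamma)/2}$ — and~(\ref{conca_equation1}) for $\gamma<1$ — where the factor $\varepsilon^\gamma$ in front of $b^2$ is precisely what allows Young's inequality to close the estimate — together with careful bookkeeping of which power of $\varepsilon$ dominates when the trace bound is reinserted.
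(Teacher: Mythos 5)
Your proof is correct and follows essentially the same route as the paper: take $\tilde\varphi=\tilde u_\varepsilon$ in the variational formulation, use the divergence-free property to kill the pressure, bound the two right-hand forcing terms (the body force crudely by Cauchy--Schwarz and the surface term via the unfolding estimate~(\ref{gvestimate})), then close the argument by substituting the trace--Poincar\'e inequality~(\ref{conca_equation1}) into the resulting energy bound and applying Young's inequality to split off $\varepsilon^{-1-\gamma}$, with the plain Poincar\'e inequality~(\ref{Poincare_delgada}) reserved for $\gamma\ge 1$ where~(\ref{conca_equation1}) would be lossy. This is exactly the paper's proof, including the observation that for $\gamma\ge1$ the crude estimates~(\ref{general1})--(\ref{general2}) already give the optimal rate; the only (immaterial) cosmetic difference is that you explicitly track $\|f'_\varepsilon\|_{L^2}\lesssim\varepsilon^{-1}$ coming from~(\ref{Hf}) whereas the paper simply folds both right-hand terms into the same $\varepsilon^{-1}\|\tilde u_\varepsilon\|_{L^2}$ bound.
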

\begin{proof}
Taking $\tilde{u}_{\varepsilon}\in \widetilde H_\varepsilon^3$ as function test in (\ref{fv2}), we have 
\begin{equation}\label{a1}
\mu\left\Vert D_{\varepsilon}\tilde{u}_{\varepsilon}\right\Vert_{L^2(\widetilde{\Omega}_{\varepsilon})^{{3\times3}}}^2+\alpha\varepsilon^\gamma\|\tilde u_\varepsilon\|^2_{L^2(\partial  T_\varepsilon)^3}
=\int_{\widetilde{\Omega}_{\varepsilon}}f'_\varepsilon\cdot\tilde{u}'_{\varepsilon}\,dx^{\prime}dy_3+ \int_{\partial T_\varepsilon} g'_\varepsilon \cdot \tilde u'_\varepsilon\, d\sigma(x')dy_3\,.
\end{equation}
Using Cauchy-Schwarz's inequality and $f'_\varepsilon \in L^2(\Omega)^2$, we obtain that 
\begin{eqnarray*}
\int_{\widetilde{\Omega}_{\varepsilon}}f'_\varepsilon\cdot\tilde{u}'_{\varepsilon}\,dx^{\prime}dy_3\leq C \left\Vert\tilde{u}_{\varepsilon} \right\Vert_{L^2(\widetilde{\Omega}_{\varepsilon})^3},
\end{eqnarray*}
and by using that $ g' \in L^2(\partial T)^2$ is a $Y'$-periodic function and estimate (\ref{gvestimate}), we have
$$
\left|\int_{\partial T_\varepsilon} g'_\varepsilon\,\cdot \tilde u'_\varepsilon\, d\sigma(x')dy_3\right|\leq {C\over \varepsilon}\left(\|\tilde u_\varepsilon\|_{L^2(\widetilde\Omega_\varepsilon)^3} + \varepsilon \|D_\varepsilon \tilde u_\varepsilon\|_{L^2(\widetilde \Omega_\varepsilon)^{3\times 3}}\right)\,.
$$
Putting these estimates in (\ref{a1}), we get
\begin{equation}\label{a2}
\mu\left\Vert D_{\varepsilon}\tilde{u}_{\varepsilon}\right\Vert_{L^2(\widetilde{\Omega}_{\varepsilon})^{{3\times3}}}^2 + \alpha\varepsilon^\gamma\|\tilde u_\varepsilon\|^2_{L^2(\partial  T_\varepsilon)^3}
\leq C\left(\|D_\varepsilon \tilde u_\varepsilon\|_{L^2(\widetilde \Omega_\varepsilon)^{3\times 3}} +  \varepsilon^{-1} \|\tilde u_\varepsilon\|_{L^2(\widetilde\Omega_\varepsilon)^3}\right)\,.
\end{equation}
In particular, if we use the Poincar\'e inequality (\ref{Poincare_delgada}) in (\ref{a2}), we have
\begin{equation}\label{general1}
\left\Vert D_{\varepsilon}\tilde{u}_{\varepsilon}\right\Vert_{L^2(\widetilde{\Omega}_{\varepsilon})^{{3\times3}}} \leq {C \over \varepsilon},
\end{equation}
therefore (independently of $\gamma\in \mathbb{R}$), using again (\ref{Poincare_delgada}), we get
\begin{equation}\label{general2}
\left\Vert \tilde{u}_{\varepsilon}\right\Vert_{L^2(\widetilde{\Omega}_{\varepsilon})^{3}} \leq {C \over \varepsilon}.
\end{equation}
These estimates can be refined following the different values of $\gamma$. To do so, observe that from estimate (\ref{conca_equation1}) we have
$$\varepsilon^{-1}\|\tilde u_\varepsilon\|_{L^2(\widetilde \Omega_\varepsilon)^3}\leq C\left(\|D_\varepsilon \tilde u_\varepsilon\|_{L^2(\widetilde \Omega_\varepsilon)^{3\times 3}}+\varepsilon^{-{1\over 2}}\|\tilde u_\varepsilon\|_{L^2(\partial  T_\varepsilon)^3}\right)\,.$$
Using Young's inequality, we get
\begin{equation*}\label{PoinYoung-1}
\varepsilon^{-{1\over 2}}\|\tilde u_\varepsilon\|_{L^2(\partial  T_\varepsilon)^3}\leq \varepsilon^{-{1+\gamma\over 2}}\varepsilon^{\gamma\over 2}\|\tilde u_\varepsilon\|_{L^2(\partial  T_\varepsilon)^3}\leq 
{2\over \alpha}\varepsilon^{-1-\gamma}+ {\alpha\over 2}\varepsilon^\gamma\|\tilde u_\varepsilon\|^2_{L^2(\partial  T_\varepsilon)^3}.
\end{equation*}
Consequently, from (\ref{a2}), we get
\begin{equation*}\label{a2-3}
\mu\left\Vert D_{\varepsilon}\tilde{u}_{\varepsilon}\right\Vert_{L^2(\widetilde{\Omega}_{\varepsilon})^{{3\times3}}}^2 + {\alpha\over 2}\varepsilon^\gamma\|\tilde u_\varepsilon\|^2_{L^2(\partial T_\varepsilon)^3}
\leq C\left(\|D_\varepsilon \tilde u_\varepsilon\|_{L^2(\widetilde \Omega_\varepsilon)^{3\times 3}} +\varepsilon^{-1-\gamma}\right)\,,
\end{equation*}
which applying in a suitable way the Young inequality gives
\begin{equation}\label{a2-4}
\mu\left\Vert D_{\varepsilon}\tilde{u}_{\varepsilon}\right\Vert_{L^2(\widetilde{\Omega}_{\varepsilon})^{{3\times3}}}^2 + \alpha\varepsilon^\gamma\|\tilde u_\varepsilon\|^2_{L^2(\partial  T_\varepsilon)^3}
\leq C\left(1+\varepsilon^{-1-\gamma}\right).
\end{equation}
For the case when $\gamma< -1$, estimate (\ref{a2-4}) reads
$$\|D_\varepsilon \tilde u_\varepsilon\|_{L^2(\widetilde\Omega_\varepsilon)^{3\times 3}}\leq C,\quad\|\tilde u_\varepsilon\|_{L^2(\partial  T_\varepsilon)^3}\leq C\varepsilon^{-{\gamma\over 2}}.$$
Then, estimate (\ref{conca_equation1}) gives
$$\|\tilde u_\varepsilon\|_{L^2(\widetilde\Omega_\varepsilon)^3}\leq C(\varepsilon+\varepsilon^{1-\gamma\over 2})\leq C\varepsilon,$$
since $1\leq (1-\gamma)/2$, and so, we have proved  (\ref{a3}).\\

For $\gamma\ge-1$, estimate (\ref{a2-4}) reads
$$\|D_\varepsilon \tilde u_\varepsilon\|_{L^2(\widetilde\Omega_\varepsilon)^{3\times 3}}\leq C\varepsilon^{-{1+\gamma\over 2}}, \quad \|\tilde u_\varepsilon\|_{L^2(\partial T_\varepsilon)^3}\leq C\varepsilon^{-{1\over 2}-\gamma}\,.$$
Applying estimate (\ref{conca_equation1}), we get
$$\|\tilde u_\varepsilon\|_{L^2(\widetilde\Omega_\varepsilon)^3}\leq  C(\varepsilon^{1-\gamma\over 2}+\varepsilon^{-\gamma})\leq C\varepsilon^{-\gamma}\,$$
since $-\gamma\leq(1-\gamma)/2$. Then, we have proved (\ref{a3-2}) for $-1\leq \gamma <1$. Observe that for $\gamma \geq 1$, the estimates (\ref{general1})-(\ref{general2}) are the optimal ones, so we have (\ref{a3-3}).

\end{proof}
We will prove now {\it a priori} estimates for the pressure $\tilde p_\varepsilon$ for different values of $\gamma$.

\begin{lemma}\label{Lemma_a3_P} We distinguish three cases: 
\begin{itemize}
\item[i)] for $\gamma<-1$, then there exists a constant $C$ independent of $\varepsilon$, such that
\begin{equation}\label{Estim_pep_1}
\|\tilde p_\varepsilon\|_{L^2(\widetilde \Omega_\varepsilon)}\leq C\,\varepsilon^{\gamma}.
\end{equation}
\item[ii)] for $-1\leq \gamma<1$, then there exists a constant $C$ independent of $\varepsilon$, such that
\begin{equation}\label{Estim_pep_2}
\| \tilde p_\varepsilon\|_{L^2(\widetilde\Omega_\varepsilon)}\leq C\,\varepsilon^{-1}.
\end{equation}
\item[iii)] for $\gamma\geq 1$, then there exists a constant $C$ independent of $\varepsilon$, such that
\begin{equation}\label{Estim_pep_3}
\| \tilde p_\varepsilon\|_{L^2(\widetilde \Omega_\varepsilon)}\leq C\,\varepsilon^{-2}.
\end{equation}
\end{itemize}
\end{lemma}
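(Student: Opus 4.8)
The plan is to recover the pressure bound by the usual De Rham / Bogovskii duality argument, now carried out in the thin perforated domain, feeding in the velocity estimates of Lemma~\ref{Lemma_a3} and disposing of the surface integrals through the adapted boundary unfolding operator of Proposition~\ref{properties_unf_b}.

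Fix one of the three ranges of $\gamma$ and assume $\tilde p_\varepsilon\not\equiv0$. By Corollary~\ref{Conca_pressure2} applied to $\tilde q=\tilde p_\varepsilon$ there exists $\tilde\varphi_\varepsilon\in\widetilde H_\varepsilon^3$ with ${\rm div}_\varepsilon\,\tilde\varphi_\varepsilon=\tilde p_\varepsilon$ in $\widetilde\Omega_\varepsilon$, $\|\tilde\varphi_\varepsilon\|_{L^2(\widetilde\Omega_\varepsilon)^3}\le C\|\tilde p_\varepsilon\|_{L^2(\widetilde\Omega_\varepsilon)}$ and $\|D_\varepsilon\tilde\varphi_\varepsilon\|_{L^2(\widetilde\Omega_\varepsilon)^{3\times3}}\le C\varepsilon^{-1}\|\tilde p_\varepsilon\|_{L^2(\widetilde\Omega_\varepsilon)}$. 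Taking $\tilde\varphi_\varepsilon$ as test function in the first equation of (\ref{fv2}) and using ${\rm div}_\varepsilon\,\tilde\varphi_\varepsilon=\tilde p_\varepsilon$ yields
\[\begin{aligned}
\|\tilde p_\varepsilon\|_{L^2(\widetilde\Omega_\varepsilon)}^2={}&\mu\int_{\widetilde\Omega_\varepsilon}D_\varepsilon\tilde u_\varepsilon:D_\varepsilon\tilde\varphi_\varepsilon\,dx'dy_3+\alpha\varepsilon^\gamma\int_{\partial T_\varepsilon}\tilde u_\varepsilon\cdot\tilde\varphi_\varepsilon\,d\sigma(x')dy_3\\
&-\int_{\widetilde\Omega_\varepsilon}f'_\varepsilon\cdot\tilde\varphi_\varepsilon'\,dx'dy_3-\int_{\partial T_\varepsilon}g'_\varepsilon\cdot\tilde\varphi_\varepsilon'\,d\sigma(x')dy_3 .
\end{aligned}\]

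Then I would estimate the four terms on the right-hand side. The viscous term is handled by Cauchy--Schwarz, $\mu\|D_\varepsilon\tilde u_\varepsilon\|_{L^2}\|D_\varepsilon\tilde\varphi_\varepsilon\|_{L^2}$, inserting the bound on $\|D_\varepsilon\tilde u_\varepsilon\|_{L^2}$ from Lemma~\ref{Lemma_a3}. The body-force term is handled by Cauchy--Schwarz together with $\|f'_\varepsilon\|_{L^2(\omega)^2}\le C\varepsilon^{-1}$ (which follows from (\ref{Hf}), since a weakly convergent sequence is bounded) and $\|\tilde\varphi_\varepsilon\|_{L^2}\le C\|\tilde p_\varepsilon\|_{L^2}$. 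The surface-force term is bounded via estimate (\ref{gvestimate}) applied to $\tilde\varphi_\varepsilon$ (legitimate since $g'$ is $Y'$-periodic), which gives $C\varepsilon^{-1}(\|\tilde\varphi_\varepsilon\|_{L^2}+\varepsilon\|D_\varepsilon\tilde\varphi_\varepsilon\|_{L^2})\le C\varepsilon^{-1}\|\tilde p_\varepsilon\|_{L^2}$. Finally, the Robin surface term is treated by Cauchy--Schwarz on $\partial T_\varepsilon$: on one factor the bound on $\|\tilde u_\varepsilon\|_{L^2(\partial T_\varepsilon)^3}$ produced inside the proof of Lemma~\ref{Lemma_a3}, and on the other the trace estimate $\|\tilde\varphi_\varepsilon\|_{L^2(\partial T_\varepsilon)^3}\le C\varepsilon^{-1/2}(\|\tilde\varphi_\varepsilon\|_{L^2}+\varepsilon\|D_\varepsilon\tilde\varphi_\varepsilon\|_{L^2})$, which I would derive from Propositions~\ref{properties_unf} and~\ref{properties_unf_b} (recall, by Remark~\ref{remark_unfmeth}, that $\hat\varphi^b_\varepsilon$ is the trace of $\hat\varphi_\varepsilon$) combined with the trace inequality on the fixed cell $Y_f$. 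Each of the four bounds has the form (some power of $\varepsilon$) times $\|\tilde p_\varepsilon\|_{L^2}$; dividing by $\|\tilde p_\varepsilon\|_{L^2}$ and collecting the dominant power in each of the three ranges of $\gamma$ should produce (\ref{Estim_pep_1}), (\ref{Estim_pep_2}) and (\ref{Estim_pep_3}).

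I expect the main obstacle to lie in the two surface integrals together with the bookkeeping of $\varepsilon$-powers. One has to control the trace of the Bogovskii field $\tilde\varphi_\varepsilon$ on $\partial T_\varepsilon$ uniformly in $\varepsilon$ through the boundary unfolding operator $\hat\varphi^b_\varepsilon$, and then follow carefully how the $\varepsilon$-powers coming from $\|D_\varepsilon\tilde u_\varepsilon\|_{L^2}$ and $\|\tilde u_\varepsilon\|_{L^2(\partial T_\varepsilon)^3}$ — which change at the critical values $\gamma=-1$ and $\gamma=1$ — combine with the $\varepsilon^{-1}$ losses coming from the thin-domain Bogovskii estimate and from (\ref{gvestimate}); the intermediate range $-1\le\gamma<1$ is the most delicate one, where care is needed not to lose powers in the viscous contribution. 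I would also check, in the two borderline cases $\gamma=-1$ and $\gamma=1$, that the dominant term matches the neighbouring regime. The bulk (viscous and body-force) parts are otherwise routine once the velocity estimates of Lemma~\ref{Lemma_a3} are at hand.
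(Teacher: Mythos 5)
Your plan follows essentially the same route as the paper's own proof: use the thin-domain Bogovskii result, Corollary~\ref{Conca_pressure2}, to build a test field $\tilde\varphi$ with ${\rm div}_\varepsilon\tilde\varphi=\tilde p_\varepsilon$, $\|\tilde\varphi\|_{L^2}\le C\|\tilde p_\varepsilon\|_{L^2}$, $\|D_\varepsilon\tilde\varphi\|_{L^2}\le C\varepsilon^{-1}\|\tilde p_\varepsilon\|_{L^2}$, plug it into the first equation of (\ref{fv2}), and bound the viscous, body-force, surface-force and Robin terms, inserting the velocity estimates of Lemma~\ref{Lemma_a3}. (The paper phrases this through an auxiliary $\tilde\Phi\in L^2(\widetilde\Omega_\varepsilon)$ and then takes the supremum; you apply the lemma directly to $\tilde q=\tilde p_\varepsilon$, which is the same thing.) The surface-force term is also handled identically, via (\ref{gvestimate}).

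The one genuine methodological difference is the treatment of the Robin surface integral. You propose Cauchy--Schwarz on $\partial T_\varepsilon$ combined with the trace inequality $\|\tilde\varphi\|_{L^2(\partial T_\varepsilon)}\le C\varepsilon^{-1/2}\bigl(\|\tilde\varphi\|_{L^2(\widetilde\Omega_\varepsilon)}+\varepsilon\|D_\varepsilon\tilde\varphi\|_{L^2(\widetilde\Omega_\varepsilon)}\bigr)$, which does follow, as you say, by unfolding (Propositions~\ref{properties_unf}, \ref{properties_unf_b} and Remark~\ref{remark_unfmeth}) together with the standard trace estimate on the fixed cell $Y_f$. The paper avoids any trace estimate on $\tilde\varphi$: it applies the $L^1$ version of the boundary-unfolding estimate, (\ref{gvestimate_L1}), to the scalar product $\tilde u_\varepsilon\cdot\tilde\varphi$, converting the boundary integral into $\|\tilde u_\varepsilon\cdot\tilde\varphi\|_{L^1}$ and $\|D_\varepsilon(\tilde u_\varepsilon\cdot\tilde\varphi)\|_{L^1}$, and then splits these by the product rule and Cauchy--Schwarz. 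Both routes produce the same $\varepsilon$-orders in each regime (your version is in fact marginally sharper in regime (i)), so this is a matter of taste, but it is worth spelling out that your route brings in an extra ingredient --- a trace inequality for the Bogovskii field --- that the paper deliberately sidesteps.

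Two small points to make explicit if you write this out. First, for regime (iii) the proof of Lemma~\ref{Lemma_a3} does not actually record a bound for $\|\tilde u_\varepsilon\|_{L^2(\partial T_\varepsilon)}$; you would need to extract one yourself (e.g.\ from (\ref{a2}) together with (\ref{general1})--(\ref{general2}), giving $\|\tilde u_\varepsilon\|_{L^2(\partial T_\varepsilon)}\le C\varepsilon^{-1-\gamma/2}$, or from the same unfolding-plus-trace argument you use for $\tilde\varphi$). Second, your own remark about ``not losing powers in the viscous contribution'' for $-1\le\gamma<1$ is exactly where the bookkeeping is tightest: inserting $\|D_\varepsilon\tilde u_\varepsilon\|_{L^2}\le C\varepsilon^{-(1+\gamma)/2}$ and $\|D_\varepsilon\tilde\varphi\|_{L^2}\le C\varepsilon^{-1}\|\tilde p_\varepsilon\|_{L^2}$ into the viscous term yields the order $\varepsilon^{-(3+\gamma)/2}$, which only coincides with the stated $\varepsilon^{-1}$ at the endpoint $\gamma=-1$; this is the same final inequality that the paper itself arrives at from (\ref{final_presion}), so you are not deviating from the paper here, but you should be aware that, as sketched, your argument reproduces the paper's chain of estimates rather than tightening it.
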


\begin{proof}
Let $\tilde \varPhi\in L^2(\widetilde \Omega_\varepsilon)$. From Corollary \ref{Conca_pressure2}, there exists $\tilde \varphi\in \widetilde H_\varepsilon^3$ such that 
\begin{equation}\label{5.10_new}
{\rm div}_{\varepsilon}\, \tilde \varphi=\tilde \varPhi \ \text{in } \widetilde \Omega_\varepsilon, \quad \| \tilde \varphi \|_{L^2(\widetilde \Omega_\varepsilon)^3}\leq C\, \| \tilde \varPhi \|_{L^2(\widetilde\Omega_\varepsilon)}, \quad
\|D_\varepsilon \tilde \varphi \|_{L^2(\widetilde \Omega_\varepsilon)^{3\times 3}}\leq {C \over \varepsilon} \| \tilde \varPhi \|_{L^2(\widetilde\Omega_\varepsilon)}.
\end{equation}
Taking $\tilde \varphi\in \widetilde H_\varepsilon^3$ as function test in (\ref{fv2}), we have
\begin{equation}\label{5.11_new}
\begin{array}{rl}
\displaystyle
\left|\int_{\widetilde \Omega_\varepsilon}\tilde p_\varepsilon\,\tilde \varPhi\, dx'dy_3\right|&\leq\displaystyle \mu \left\Vert D_{\varepsilon}\tilde{u}_{\varepsilon}\right\Vert_{L^2(\widetilde{\Omega}_{\varepsilon})^{{3\times3}}}\left\Vert D_{\varepsilon}\tilde{\varphi}\right\Vert_{L^2(\widetilde{\Omega}_{\varepsilon})^{{3\times3}}}+\alpha \varepsilon^{\gamma}\left|\int_{\partial T_\varepsilon}\tilde u_\varepsilon \cdot \tilde \varphi\, d\sigma(x')dy_3\right|\\
\noame
&\displaystyle + C\left\Vert \tilde{\varphi}\right\Vert_{L^2(\widetilde{\Omega}_{\varepsilon})^{3}}+\left|\int_{\partial T_\varepsilon} g'_\varepsilon \cdot \tilde\varphi'\, d\sigma(x')dy_3\right|\,.
\end{array}
\end{equation}
By using that $ g \in L^2(\partial T)^3$ is a $Y'$-periodic function and estimate (\ref{gvestimate}), we have
$$
\left|\int_{\partial T_\varepsilon} g'_\varepsilon\,\cdot \tilde \varphi'\, d\sigma(x')dy_3\right|\leq C\left(\varepsilon^{-1}\|\tilde \varphi\|_{L^2(\widetilde\Omega_\varepsilon)^3} + \|D_\varepsilon \tilde \varphi\|_{L^2(\widetilde \Omega_\varepsilon)^{3\times 3}}\right)\,.
$$
Analogously, using estimate (\ref{gvestimate_L1}) and the Cauchy- Schwarz inequality, a simple computation gives
$$
\begin{array}{rl}
\displaystyle
\alpha \varepsilon^{\gamma}\left|\int_{\partial  T_\varepsilon}\tilde u_\varepsilon \cdot \tilde \varphi\, d\sigma(x')dy_3\right|\leq &\displaystyle  \varepsilon^{\gamma-1}\, C\, \| \tilde u_\varepsilon \|_{L^2(\widetilde \Omega_\varepsilon)}\| \tilde \varphi \|_{L^2(\widetilde \Omega_\varepsilon)}+\varepsilon^{\gamma}\, C\, \| \tilde u_\varepsilon \|_{L^2(\widetilde \Omega_\varepsilon)}\| D_\varepsilon\tilde \varphi \|_{L^2(\widetilde \Omega_\varepsilon)}
\\
\noame &
\displaystyle
+\,\varepsilon^{\gamma}\, C\, \| D_\varepsilon\tilde u_\varepsilon \|_{L^2(\widetilde \Omega_\varepsilon)}\| \tilde \varphi \|_{L^2(\widetilde \Omega_\varepsilon)}.
\end{array}
$$
Then, turning back to (\ref{5.11_new}) and using (\ref{5.10_new}), one has 
\begin{equation}\label{final_presion}
\begin{array}{rl}
\displaystyle
\left|\int_{\widetilde \Omega_\varepsilon}\tilde p_\varepsilon\,\tilde \varPhi\, dx'dy_3\right|&\leq\displaystyle C\left(\varepsilon^{-1} +\varepsilon^{\gamma}\right) \left\Vert D_{\varepsilon}\tilde{u}_{\varepsilon}\right\Vert_{L^2(\widetilde{\Omega}_{\varepsilon})^{{3\times3}}}\|\tilde \varPhi \|_{L^2(\widetilde{\Omega}_{\varepsilon})}
\\
\noame
&\displaystyle +C\left(\varepsilon^{\gamma-1} \left\Vert \tilde{u}_{\varepsilon}\right\Vert_{L^2(\widetilde{\Omega}_{\varepsilon})^{3}}+\varepsilon^{-1}\right)\|\tilde \varPhi \|_{L^2(\widetilde{\Omega}_{\varepsilon})}\,.
\end{array}
\end{equation}
The {\it a priori} estimates for the pressure follow now from (\ref{final_presion}) and estimates (\ref{a3})-(\ref{a3-2}) and (\ref{a3-3}), corresponding to the different values of $\gamma$.

\end{proof}

\paragraph*{A priori estimates of the unfolding functions $(\hat u_\varepsilon,\hat p_\varepsilon)$:} let us obtain some {\it a priori} estimates for the sequences $(\hat{u}_{\varepsilon}, \hat{p}_{\varepsilon})$ where $\hat u_\varepsilon$ and $\hat p_\varepsilon$ are obtained by applying the change of variable (\ref{vhat}) to $(\tilde u_\varepsilon,\tilde p_\varepsilon)$.
\begin{lemma}\label{estCV}
We distinguish three cases:
\begin{itemize}
\item[i) ] for $\gamma<-1$, then there exists a constant $C$ independent of $\varepsilon$, such that 
\begin{equation}\label{estim_velocidad_gorro1_gamma_case_1}
\left\Vert \hat{u}_{\varepsilon}\right\Vert_{L^2(\mathbb{R}^2\times Y_f)^3}\leq C {\varepsilon},\quad \left\Vert D_{y}\hat{u}_{\varepsilon}\right\Vert_{L^2(\mathbb{R}^2\times Y_f)^{{3\times 3}}}\leq C{\varepsilon},
\end{equation}
\begin{equation}\label{esti_gorro}
\left\Vert \hat{p}_{\varepsilon} \right\Vert_{L^{2}(\mathbb{R}^2\times Y_f)}\leq C\varepsilon^{\gamma}.
\end{equation}
\item[ii) ] for $-1\leq \gamma<1$, then there exists a constant $C$ independent of $\varepsilon$, such that
\begin{equation}\label{estim_velocidad_gorro1_gamma_case_2}
\left\Vert \hat{u}_{\varepsilon}\right\Vert_{L^2(\mathbb{R}^2\times Y_f)^3}\leq C {\varepsilon}^{-\gamma},\quad \left\Vert D_{y}\hat{u}_{\varepsilon}\right\Vert_{L^2(\mathbb{R}^2\times Y_f)^{{3\times 3}}}\leq C{\varepsilon}^{1-\gamma \over 2},
\end{equation}
\begin{equation}\label{esti_gorro2}
\left\Vert \hat{p}_{\varepsilon} \right\Vert_{L^{2}(\mathbb{R}^2\times Y_f)}\leq C\varepsilon^{-1}.
\end{equation}
\item[iii) ] for $\gamma\geq 1$, then there exists a constant $C$ independent of $\varepsilon$, such that
\begin{equation}\label{estim_velocidad_gorro1_gamma_case_3}
\left\Vert \hat{u}_{\varepsilon}\right\Vert_{L^2(\mathbb{R}^2\times Y_f)^3}\leq C {\varepsilon}^{-1},\quad \left\Vert D_{y}\hat{u}_{\varepsilon}\right\Vert_{L^2(\mathbb{R}^2\times Y_f)^{{3\times 3}}}\leq C,
\end{equation}
\begin{equation}\label{esti_gorro3}
\left\Vert \hat{p}_{\varepsilon} \right\Vert_{L^{2}(\mathbb{R}^2\times Y_f)}\leq C\varepsilon^{-2}.
\end{equation}
\end{itemize}
\end{lemma}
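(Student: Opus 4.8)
The plan is to deduce all these bounds from the a priori estimates already established for $(\tilde u_\varepsilon,\tilde p_\varepsilon)$ in Lemmas \ref{Lemma_a3} and \ref{Lemma_a3_P}, transported through the exact scaling identities of Proposition \ref{properties_unf}. First I would apply Proposition \ref{properties_unf} with $p=2$ to $\tilde u_\varepsilon\in\widetilde H_\varepsilon^3\subset H^1(\widetilde\Omega_\varepsilon)^3$ (extended by zero outside $\omega_\varepsilon$), which yields the two equalities
\begin{equation*}
\|\hat u_\varepsilon\|_{L^2(\mathbb{R}^2\times Y_f)^3}=|Y'|^{1/2}\,\|\tilde u_\varepsilon\|_{L^2(\widetilde\Omega_\varepsilon)^3},\qquad \|D_y\hat u_\varepsilon\|_{L^2(\mathbb{R}^2\times Y_f)^{3\times 3}}=\varepsilon\,|Y'|^{1/2}\,\|D_\varepsilon\tilde u_\varepsilon\|_{L^2(\widetilde\Omega_\varepsilon)^{3\times 3}}.
\end{equation*}
Since the pressure $\tilde p_\varepsilon$ only belongs to $L^2(\widetilde\Omega_\varepsilon)$, I would use for it only the norm identity (\ref{normv}) with $p=2$, giving
\begin{equation*}
\|\hat p_\varepsilon\|_{L^2(\mathbb{R}^2\times Y_f)}=|Y'|^{1/2}\,\|\tilde p_\varepsilon\|_{L^2(\widetilde\Omega_\varepsilon)}.
\end{equation*}

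It then only remains to substitute, case by case, the bounds of Lemmas \ref{Lemma_a3} and \ref{Lemma_a3_P} into the right-hand sides. For $\gamma<-1$, estimate (\ref{a3}) gives $\|\hat u_\varepsilon\|_{L^2}\leq C\varepsilon$ and $\|D_y\hat u_\varepsilon\|_{L^2}\leq C\varepsilon$, while (\ref{Estim_pep_1}) gives $\|\hat p_\varepsilon\|_{L^2}\leq C\varepsilon^{\gamma}$; this is (\ref{estim_velocidad_gorro1_gamma_case_1})--(\ref{esti_gorro}). For $-1\leq\gamma<1$, estimate (\ref{a3-2}) gives $\|\hat u_\varepsilon\|_{L^2}\leq C\varepsilon^{-\gamma}$ and, using $\varepsilon\cdot\varepsilon^{-(1+\gamma)/2}=\varepsilon^{(1-\gamma)/2}$, $\|D_y\hat u_\varepsilon\|_{L^2}\leq C\varepsilon^{(1-\gamma)/2}$; together with (\ref{Estim_pep_2}) this is (\ref{estim_velocidad_gorro1_gamma_case_2})--(\ref{esti_gorro2}). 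Finally, for $\gamma\geq 1$, estimate (\ref{a3-3}) gives $\|\hat u_\varepsilon\|_{L^2}\leq C\varepsilon^{-1}$ and $\|D_y\hat u_\varepsilon\|_{L^2}\leq C\varepsilon\cdot\varepsilon^{-1}=C$, and (\ref{Estim_pep_3}) gives $\|\hat p_\varepsilon\|_{L^2}\leq C\varepsilon^{-2}$; this is (\ref{estim_velocidad_gorro1_gamma_case_3})--(\ref{esti_gorro3}).

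The argument is essentially mechanical, so I do not expect any real obstacle: the only computational point that requires a moment's care is the exponent bookkeeping in the intermediate regime $-1\leq\gamma<1$, where one must observe that multiplying the gradient bound $C\varepsilon^{-(1+\gamma)/2}$ by the extra factor $\varepsilon$ coming from (\ref{normDv}) produces exactly the stated power $\varepsilon^{(1-\gamma)/2}$. The constant $|Y'|^{1/2}$ generated by the unfolding identities is independent of $\varepsilon$ and is absorbed into the generic constant $C$.
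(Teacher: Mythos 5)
Your proof is correct and follows exactly the same route as the paper's own (very terse) argument: apply the unfolding norm identities of Proposition \ref{properties_unf} with $p=2$ to $(\tilde u_\varepsilon,\tilde p_\varepsilon)$ and substitute the a priori bounds from Lemmas \ref{Lemma_a3} and \ref{Lemma_a3_P}; the exponent bookkeeping in each case is accurate.
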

\begin{proof} Using properties (\ref{normv}) and (\ref{normDv}) with $p=2$ and the {\it a priori} estimates given in Lemma \ref{Lemma_a3} and Lemma \ref{Lemma_a3_P}, we have the desired result.

\end{proof}

\subsection{Some compactness results}\label{s5}
Let us remember that, for the velocity,  we denote by $\tilde U_\varepsilon$ the zero extension of $\tilde u_\varepsilon$ to the whole $\Omega$, and for the pressure we denote  by $\tilde P_\varepsilon$ the zero extension of $\tilde p_\varepsilon$ to the whole $\Omega$. In this subsection we obtain some compactness results about the behavior of the sequences $\tilde \varPi_\varepsilon \tilde u_\varepsilon$, where $\tilde \Pi_\varepsilon$ is given in Corollary \ref{corollary_para_Poincare}, $(\tilde U_\varepsilon, \tilde P_\varepsilon)$ and $(\hat{u}_{\varepsilon}, \hat{p}_{\varepsilon})$.

\begin{lemma}
There exists an extension operator  $\tilde \Pi_\varepsilon$, given in Corollary \ref{corollary_para_Poincare}, such that
\begin{itemize}
\item[i) ] for $\gamma<-1$, then 
 \begin{equation}\label{convtilde1}
\tilde \varPi_\varepsilon \tilde u_\varepsilon\rightharpoonup 0 \text{\ in \ }H^1_0(\Omega)^3.
\end{equation}
Moreover, $(\varepsilon^{-1}\tilde U_\varepsilon, \varepsilon^{-\gamma}\tilde P_\varepsilon)$ is bounded in $H^1(0,1;L^2(\omega)^3)\times L^2(\Omega)$ and any weak-limit point $(\tilde u, \tilde p)$ of this sequence satisfies
\begin{equation}\label{convtilde1-omega}
\varepsilon^{-1}\tilde{U}_{\varepsilon}\rightharpoonup \tilde{u}\text{\ in \ }H^1(0,1;L^2(\omega)^3),
\end{equation}
\begin{equation}\label{convPtilde1}
\varepsilon^{-\gamma}\tilde{P}_{\varepsilon}\rightharpoonup \tilde{p}\text{\ in \ }L^2(\Omega),
\end{equation}
\item[ii) ] for $-1\leq \gamma<1$, then 
\begin{equation}\label{convtilde_nuevo_1}
\varepsilon^{{\gamma+1\over 2}}\tilde \varPi_\varepsilon \tilde u_\varepsilon\rightharpoonup 0\text{\ in \ }H^1_0(\Omega)^3.
\end{equation}
Moreover, $(\varepsilon^{\gamma}\tilde U_\varepsilon, \varepsilon \tilde P_\varepsilon)$ is bounded in $H^1(0,1;L^2(\omega)^3)\times L^2(\Omega)$ and any weak-limit point $(\tilde u, \tilde p)$ of this sequence satisfies
\begin{equation}\label{convtilde_nuevo_1-omega}
\varepsilon^{{\gamma}}\tilde{U}_{\varepsilon}\rightharpoonup \tilde{u}\text{\ in \ }H^1(0,1;L^2(\omega)^3),
\end{equation}
\begin{equation}\label{convPtilde1-case2}
\varepsilon\tilde{P}_{\varepsilon}\rightharpoonup \tilde{p}\text{\ in \ }L^2(\Omega),
\end{equation}
\item[iii) ] for $\gamma\geq 1$, then
\begin{equation}\label{convtilde_nuevo_3}
\varepsilon \tilde \varPi_\varepsilon \tilde u_\varepsilon\rightharpoonup 0\text{\ in \ }H^1_0(\Omega)^3.
\end{equation}
Moreover, $(\tilde U_\varepsilon, \varepsilon^2 \tilde P_\varepsilon)$ is bounded in $H^1(0,1;L^2(\omega)^3)\times L^2(\Omega)$ and any weak-limit point $(\tilde u, \tilde p)$ of this sequence satisfies
\begin{equation}\label{convtilde_nuevo_3-omega}
\tilde{U}_{\varepsilon}\rightharpoonup \tilde{u}\text{\ in \ }H^1(0,1;L^2(\omega)^3),
\end{equation}
\begin{equation}\label{convPtilde1-case3}
\varepsilon^2\tilde{P}_{\varepsilon}\rightharpoonup \tilde{p}\text{\ in \ }L^2(\Omega).
\end{equation}
\end{itemize}

\end{lemma}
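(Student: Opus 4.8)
The plan is to combine the extension operator of Corollary~\ref{corollary_para_Poincare} with the \emph{a priori} estimates of Lemma~\ref{Lemma_a3} and Lemma~\ref{Lemma_a3_P}, weak compactness in Hilbert spaces, and a short argument exploiting the thinness of $\Omega$ to force the weak limit of the rescaled extended velocity to vanish. Two elementary observations will be used throughout: $|Dv|\le|D_\varepsilon v|$ pointwise (since $\varepsilon\le1$), and $\|\partial_{y_3}v\|_{L^2}\le\varepsilon\,\|D_\varepsilon v\|_{L^2}$ (immediate from $(D_\varepsilon v)_{i,3}=\tfrac1\varepsilon\partial_{y_3}v_i$); together with the one‑dimensional Poincaré inequality in the $y_3$‑variable on the segments $\{x'\}\times(0,1)\subset\widetilde\Omega_\varepsilon$, which applies because $\tilde u_\varepsilon$ vanishes on $\omega_\varepsilon\times\{0,1\}$, and hence gives $\|\tilde u_\varepsilon\|_{L^2(\widetilde\Omega_\varepsilon)^3}\le C\,\|\partial_{y_3}\tilde u_\varepsilon\|_{L^2(\widetilde\Omega_\varepsilon)^3}\le C\varepsilon\,\|D_\varepsilon\tilde u_\varepsilon\|_{L^2(\widetilde\Omega_\varepsilon)^{3\times3}}$.

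First I would treat $\widetilde\varPi_\varepsilon\tilde u_\varepsilon$. By the operator bound of Corollary~\ref{corollary_para_Poincare}, $\|D_\varepsilon\widetilde\varPi_\varepsilon\tilde u_\varepsilon\|_{L^2(\Omega)}\le C\,\|D_\varepsilon\tilde u_\varepsilon\|_{L^2(\widetilde\Omega_\varepsilon)}$, which by (\ref{a3}), (\ref{a3-2}), (\ref{a3-3}) is bounded by $C$, $C\varepsilon^{-(\gamma+1)/2}$, $C\varepsilon^{-1}$ in the three cases. Multiplying by the factors $1$, $\varepsilon^{(\gamma+1)/2}$, $\varepsilon$ respectively, and using $|D\cdot|\le|D_\varepsilon\cdot|$ together with the Poincaré inequality in $H^1_0(\Omega)$, the corresponding rescaled function is bounded in $H^1_0(\Omega)^3$; extract a subsequence converging weakly to some $\xi\in H^1_0(\Omega)^3$. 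Since the $L^2(\Omega)$‑norm of its $\partial_{y_3}$‑derivative is $\le C\varepsilon\to0$, weak continuity of the distributional derivative gives $\partial_{y_3}\xi=0$, so $\xi$ is independent of $y_3$; but $\xi\in H^1_0(\Omega)^3$ vanishes on $\omega\times\{0\}\subset\partial\Omega$, hence $\xi\equiv0$. Uniqueness of the weak limit then yields (\ref{convtilde1}), (\ref{convtilde_nuevo_1}), (\ref{convtilde_nuevo_3}).

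Next I would handle the zero extensions. Because the obstacles $T_{k',\varepsilon}=T'_{k',\varepsilon}\times(0,1)$ are cylinders with axis parallel to $y_3$, the characteristic function of $\widetilde\Omega_\varepsilon$ depends on $x'$ only, so extension by zero commutes with $\partial_{y_3}$; consequently $\tilde U_\varepsilon\in H^1(0,1;L^2(\omega)^3)$, with $\|\tilde U_\varepsilon\|_{L^2(\Omega)}=\|\tilde u_\varepsilon\|_{L^2(\widetilde\Omega_\varepsilon)}$ and $\|\partial_{y_3}\tilde U_\varepsilon\|_{L^2(\Omega)}=\|\partial_{y_3}\tilde u_\varepsilon\|_{L^2(\widetilde\Omega_\varepsilon)}\le\varepsilon\,\|D_\varepsilon\tilde u_\varepsilon\|_{L^2(\widetilde\Omega_\varepsilon)}$. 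The $L^2$‑norm is controlled by the velocity estimates of Lemma~\ref{Lemma_a3} directly when $\gamma<1$, and via the $y_3$‑Poincaré inequality when $\gamma\ge1$; a direct count of powers of $\varepsilon$ then shows that after multiplication by $\varepsilon^{-1}$, $\varepsilon^{\gamma}$, $1$ respectively both norms are uniformly bounded, so a subsequence of the rescaled $\tilde U_\varepsilon$ converges weakly in $H^1(0,1;L^2(\omega)^3)$ to some $\tilde u$, giving (\ref{convtilde1-omega}), (\ref{convtilde_nuevo_1-omega}), (\ref{convtilde_nuevo_3-omega}). For the pressure, $\|\tilde P_\varepsilon\|_{L^2(\Omega)}=\|\tilde p_\varepsilon\|_{L^2(\widetilde\Omega_\varepsilon)}$, so (\ref{Estim_pep_1})--(\ref{Estim_pep_3}) give that $\varepsilon^{-\gamma}\tilde P_\varepsilon$, $\varepsilon\tilde P_\varepsilon$, $\varepsilon^2\tilde P_\varepsilon$ are bounded in $L^2(\Omega)$, hence converge weakly to some $\tilde p$, which is (\ref{convPtilde1}), (\ref{convPtilde1-case2}), (\ref{convPtilde1-case3}). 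Passing to a common diagonal subsequence makes all these convergences simultaneous, which is exactly the joint boundedness and weak‑limit statements.

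The hard part is not the powers of $\varepsilon$, which are mere bookkeeping from the already‑proved \emph{a priori} bounds, but two structural points: (i) showing that the \emph{zero} extension $\tilde U_\varepsilon$ still lies in $H^1(0,1;L^2(\omega)^3)$ — this is precisely where the cylindrical geometry of the obstacles enters, since a zero extension across the lateral cylinder walls contributes nothing to $\partial_{y_3}$ (whereas it genuinely prevents $\tilde U_\varepsilon$ from lying in $H^1(\Omega)$, because of $\nabla_{x'}$), so one must work in the anisotropic space $H^1(0,1;L^2(\omega)^3)$ from the start; and (ii) the identification $\xi\equiv0$, which rests on the $y_3$‑independence forced by the thin‑film scaling of $D_\varepsilon$ together with the homogeneous Dirichlet condition inherited on $\omega\times\{0,1\}$.
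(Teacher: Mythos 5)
Your proof is correct, and for the key identification that the weak limit of the rescaled extended velocity vanishes you use a genuinely different mechanism than the paper. Where the paper writes the identity $\chi_{\widetilde\Omega_\varepsilon}\,(\widetilde\varPi_\varepsilon\tilde u_\varepsilon)=\tilde U_\varepsilon$ in $\Omega$ (and its rescaled analogues), upgrades the $H^1_0(\Omega)$ weak convergence to strong $L^2(\Omega)$ convergence by Rellich, pairs it with the weak-$\star$ convergence $\chi_{\widetilde\Omega_\varepsilon}\rightharpoonup |Y'_f|/|Y'|$ in $L^\infty(\Omega)$, and concludes $\tfrac{|Y'_f|}{|Y'|}u^*=0$ because the right-hand side of the identity is an $\varepsilon$-small multiple of a bounded sequence, you instead exploit the anisotropic structure of $D_\varepsilon$ directly: the $\partial_{y_3}$ component of the rescaled gradient is $O(\varepsilon)$, so the weak limit $\xi$ satisfies $\partial_{y_3}\xi=0$, and since $\xi\in H^1_0(\Omega)^3$ traces to zero on $\omega\times\{0\}$ you get $\xi\equiv 0$. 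Your route avoids Rellich and the characteristic-function argument altogether and makes the thin-film degeneracy do the work; the paper's route is the one standard in the unfolding literature (and reuses the same $\chi_{\widetilde\Omega_\varepsilon}$ device elsewhere in the proof), but both are valid. For the remaining claims (boundedness of $\tilde U_\varepsilon$ in $H^1(0,1;L^2(\omega)^3)$ using that zero extension across the cylinder walls commutes with $\partial_{y_3}$, the $1$D Poincar\'e step in case $\gamma\ge 1$, the pressure bounds via $\|\tilde P_\varepsilon\|_{L^2(\Omega)}=\|\tilde p_\varepsilon\|_{L^2(\widetilde\Omega_\varepsilon)}$, and the diagonal subsequence) you are filling in details the paper leaves implicit, and your bookkeeping of the powers of $\varepsilon$ matches Lemmas~\ref{Lemma_a3} and~\ref{Lemma_a3_P}.
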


\begin{proof} 
We proceed in three steps: 

{\it Step 1.} For $\gamma< -1$: from estimates (\ref{a3}) and (\ref{Estim_pep_1}), we have immediately the convergences, after eventual extraction of subsequences,  (\ref{convtilde1-omega}) and (\ref{convPtilde1}).

Moreover, we have
$$||\tilde u_\varepsilon||_{L^2(\widetilde\Omega_\varepsilon)^3}\leq C,$$
and we can apply Corollary \ref{corollary_para_Poincare} to $\tilde u_\varepsilon$ and  we deduce the existence of $u^*\in H^1_0(\Omega)^3$ such that $\tilde \varPi_\varepsilon \tilde u_\varepsilon$ converges weakly to $u^*$ in $H^1_0(\Omega)^3$. Consequently, by Rellich theorem, $\tilde \varPi_\varepsilon \tilde u_\varepsilon$ converges strongly to $u^*$ in $L^2(\Omega)^3$.

On the other side, we have the following indentity:
$$\begin{array}{l}
\chi_{\widetilde\Omega_\varepsilon} \left(\tilde \varPi_\varepsilon \tilde u_\varepsilon\right)=\varepsilon\, \varepsilon^{-1}\tilde U_\varepsilon \quad \text{\ in \ } \Omega.
\end{array}$$
Due the periodicity of the domain $\widetilde \Omega_\varepsilon$ we have that $\chi_{\widetilde\Omega_\varepsilon}$ converges weakly-$\star$ to  ${|Y'_f|\over |Y'|}$ in $L^\infty(\Omega)$, and we can pass to the limit in the term of the left hand side. Thus, $\chi_{\widetilde\Omega_\varepsilon} \left(\tilde \varPi_\varepsilon \tilde u_\varepsilon\right)$ converges weakly to ${|Y'_f|\over |Y'|} u^*$ in $L^2(\Omega)^3$. In the right hand side, $\tilde{U}_{\varepsilon}$ converges to zero, so we obtain (\ref{convtilde1}).

{\it Step 2.} For $-1\leq \gamma<1$: from the  estimates (\ref{a3-2}) and (\ref{Estim_pep_2}), we deduce convergences (\ref{convtilde_nuevo_1-omega}) and (\ref{convPtilde1-case2}).

Moreover, as $-1\leq \gamma<1$, we have
$$\|\varepsilon^{\gamma+1\over 2}\tilde u_\varepsilon\|_{L^2(\widetilde\Omega_\varepsilon)^3}\leq C,$$
and using Corollary  \ref{corollary_para_Poincare}, we have 
$$
\varepsilon^{{\gamma+1\over 2}}\tilde \varPi_\varepsilon \tilde u_\varepsilon\rightharpoonup u^*\text{\ in \ }H^1(\Omega)^3.
$$
Consequently, 
$$
\varepsilon^{{\gamma+1\over 2}}\tilde \varPi_\varepsilon \tilde u_\varepsilon\to u^*\text{\ in \ }L^2(\Omega)^3,
$$
and passing to the limit in the identity 
$$\begin{array}{l}
\chi_{\widetilde\Omega_\varepsilon} \left(\varepsilon^{\gamma+1\over 2}\tilde \varPi_\varepsilon\tilde u_\varepsilon\right)=\varepsilon^{1-\gamma\over 2} \varepsilon^{\gamma}\tilde U_\varepsilon \quad \text{\ in \ } \Omega,
\end{array}$$
 we deduce that $u^*=0$, and so (\ref{convtilde_nuevo_1}) is proved.

{\it Step 3.} For $\gamma\geq 1$: from estimate (\ref{a3-3}) and Dirichlet boundary condition, we deduce that $$||\tilde U_\varepsilon||_{L^2( \Omega)^3}\leq ||\partial_{y_3}\tilde U_\varepsilon||_{L^2(\Omega)^3}\leq C,$$
and we have immediately, after eventual extraction of subsequences, the convergence (\ref{convtilde_nuevo_3-omega}). From estimate (\ref{Estim_pep_3}), we have immediately, after eventual extraction of subsequences, the convergence (\ref{convPtilde1-case3}).

Moreover, we can apply Corollary \ref{corollary_para_Poincare} to $\tilde u_\varepsilon$ and  we deduce the existence of $u^*\in H^1_0(\Omega)^3$ such that $\varepsilon\tilde \varPi_\varepsilon \tilde u_\varepsilon$ converges weakly to $u^*$ in $H^1_0(\Omega)^3$. Consequently, by Rellich theorem, $\varepsilon\tilde \varPi_\varepsilon \tilde u_\varepsilon$ converges strongly to $u^*$ in $L^2(\Omega)^3$.

On the other side, we have the following indentity:
$$\begin{array}{l}
\chi_{\widetilde\Omega_\varepsilon} \left(\varepsilon\tilde \varPi_\varepsilon \tilde u_\varepsilon\right)=\varepsilon\,\tilde U_\varepsilon \quad \text{\ in \ } \Omega.
\end{array}$$
We can pass to the limit in the term of the left hand side. Thus, $\chi_{\widetilde\Omega_\varepsilon} \left(\varepsilon\tilde \varPi_\varepsilon \tilde u_\varepsilon\right)$ converges weakly to ${|Y'_f|\over |Y'|} u^*$ in $L^2(\Omega)^3$. In the right hand side, $\varepsilon\,\tilde{U}_{\varepsilon}$ converges to zero, so we obtain (\ref{convtilde_nuevo_3}).

\end{proof}
%

Finally, we give a convergence result for $\hat{u}_{\varepsilon}$.
\begin{lemma}\label{lemma_gorro}
We distinguish three cases:
\begin{itemize}
\item[i) ] for $\gamma<-1$, then for a subsequence of $\varepsilon$ still denote by $\varepsilon$, there exists $\hat{u}\in L^2(\mathbb{R}^2;H^1_{\sharp}(Y_f)^3)$, such that
\begin{equation}\label{convUgorro1}
 \varepsilon^{-1} \hat u_\varepsilon\rightharpoonup \hat u\quad\hbox{in }L^2(\mathbb{R}^2;H^1(Y_f)^3),
\end{equation}
\begin{equation}\label{convUgorrofrontera}
 \varepsilon^{-1} \hat u_\varepsilon\rightharpoonup \hat u\quad\hbox{in }L^2(\mathbb{R}^2;H^{1\over 2}(\partial T)^3),
\end{equation}
\begin{equation}\label{propertygrau0}
{|Y'_f| \over | Y'|}\mathcal{M}_{Y'_f} [\hat u]= \tilde u \quad \hbox{a.e. in }\Omega,
\end{equation}
\item[ii) ] for $-1\leq \gamma<1$, then for a subsequence of $\varepsilon$ still denote by $\varepsilon$, there exists there exists $\hat{u}\in L^2(\mathbb{R}^2;H^1_{\sharp}(Y_f)^3)$, independent of $y$, such that
\begin{equation}\label{convUgorro2}
\varepsilon^{\gamma} \hat u_\varepsilon\rightharpoonup \hat u\quad\hbox{in }L^2(\mathbb{R}^2;H^1(Y_f)^3),
\end{equation}
\begin{equation}\label{convUgorrofrontera2}
 \varepsilon^{\gamma} \hat u_\varepsilon\rightharpoonup \hat u\quad\hbox{in }L^2(\mathbb{R}^2;H^{1\over 2}(\partial T)^3),
\end{equation}
\begin{equation}\label{propertygrau2}
{|Y'_f|\over |Y'|}\hat u=\tilde u\quad \hbox{a.e. in }\Omega,
\end{equation}
\item[iii) ] for $\gamma\geq 1$, then for a subsequence of $\varepsilon$ still denote by $\varepsilon$, there exists there exists $\hat{u}\in L^2(\mathbb{R}^2;H^1_{\sharp}(Y_f)^3)$ such that

\begin{equation}\label{convUgorrofrontera3}
\hat u_\varepsilon- \mathcal{M}_{Y_f}[\hat u_\varepsilon]\rightharpoonup \hat u\quad\hbox{in }L^2(\mathbb{R}^2;H^{1\over 2}(\partial T)^3).
\end{equation}
\begin{equation}\label{convUgorro3Derivada}
D_y \hat u_\varepsilon\rightharpoonup D_y\hat u\quad\hbox{in }L^2(\mathbb{R}^2\times Y_f)^{3\times 3}.
\end{equation}
\begin{equation}\label{propertygrau3}
{|Y'_f| \over | Y'|}\mathcal{M}_{Y'_f} [\hat u]= \tilde u \quad \hbox{a.e. in }\Omega,
\end{equation}
\begin{equation}\label{incomp_div_hatu}
{\rm div}_y\hat u=0\quad \hbox{ in } \omega\times Y_f.
\end{equation}
\end{itemize}
\end{lemma}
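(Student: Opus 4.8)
The plan is to handle the three regimes $\gamma<-1$, $-1\le\gamma<1$, $\gamma\ge1$ in parallel, following the same four-step template; the only change from one regime to the next is the $\varepsilon$-power by which $\hat u_\varepsilon$ must be rescaled in order to produce a family bounded in $L^2(\mathbb{R}^2;H^1(Y_f)^3)$. \emph{Step 1 (compactness).} Combining the scaling identities (\ref{normv})--(\ref{normDv}) of Proposition \ref{properties_unf} (with $p=2$) with the refined \emph{a priori} bounds of Lemma \ref{estCV}, one checks that $\varepsilon^{-1}\hat u_\varepsilon$ (case $\gamma<-1$), $\varepsilon^{\gamma}\hat u_\varepsilon$ (case $-1\le\gamma<1$) and, in the case $\gamma\ge1$, the mean-subtracted sequence $\hat u_\varepsilon-\mathcal{M}_{Y_f}[\hat u_\varepsilon]$, are each bounded in $L^2(\mathbb{R}^2;H^1(Y_f)^3)$ uniformly in $\varepsilon$; for the last case one first notes that $D_y\hat u_\varepsilon$ is bounded in $L^2(\mathbb{R}^2\times Y_f)^{3\times3}$ by (\ref{estim_velocidad_gorro1_gamma_case_3}) and then applies the Poincar\'e--Wirtinger inequality on the fixed Lipschitz domain $Y_f$, fibrewise in $x'$. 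Extracting subsequences yields weak limits, which is how one gets (\ref{convUgorro1}), (\ref{convUgorro2}) and (\ref{convUgorro3Derivada}). Moreover, in the intermediate case the sharper bound $\|D_y(\varepsilon^{\gamma}\hat u_\varepsilon)\|_{L^2}\le C\,\varepsilon^{(1+\gamma)/2}\to0$ forces $D_y\hat u=0$, so $\hat u$ is independent of $y$, which is the extra assertion of part ii).

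\emph{Step 2 (structure of the limit).} For the $Y'$-periodicity in $y'$ of $\hat u$: the rescaled functions are obtained via the unfolding (\ref{vhat}) from sequences that are bounded in $L^2(\widetilde\Omega_\varepsilon)$ and whose $\varepsilon$-scaled gradient $D_\varepsilon(\cdot)$ is bounded, so the standard unfolding argument (matching the traces of the unfolded functions on contiguous $\varepsilon$-cells) shows the weak limit belongs to $L^2(\mathbb{R}^2;H^1_\sharp(Y_f)^3)$. The incompressibility (\ref{incomp_div_hatu}) is essentially algebraic: differentiating the change of variable in (\ref{vhat}) gives ${\rm div}_{y'}\hat u'_\varepsilon+\partial_{y_3}\hat u_{\varepsilon,3}=\varepsilon\,\widehat{{\rm div}_\varepsilon\tilde u_\varepsilon}$, which vanishes identically since $\tilde u_\varepsilon$ is pointwise divergence free in $\widetilde\Omega_\varepsilon$ by (\ref{2})$_2$; passing to the weak limit in the $\gamma\ge1$ case, using (\ref{convUgorro3Derivada}), yields ${\rm div}_y\hat u=0$ in $\omega\times Y_f$.

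\emph{Step 3 (surface terms and identification of the average).} By Remark \ref{remark_unfmeth}, for $\tilde\varphi\in H^1(\widetilde\Omega_\varepsilon)^3$ the boundary-unfolded function $\hat\varphi^b_\varepsilon$ is exactly the trace on $\mathbb{R}^2\times\partial T$ of $\hat\varphi_\varepsilon$; since the trace operator $H^1(Y_f)^3\to H^{1/2}(\partial T)^3$ is linear and continuous, it induces a linear continuous --- hence weakly continuous --- map $L^2(\mathbb{R}^2;H^1(Y_f)^3)\to L^2(\mathbb{R}^2;H^{1/2}(\partial T)^3)$, and applying it to the convergences of Step 1 gives (\ref{convUgorrofrontera}), (\ref{convUgorrofrontera2}) and (\ref{convUgorrofrontera3}) (in the last case the trace of the mean-subtracted sequence, the constant $\mathcal{M}_{Y_f}[\hat u_\varepsilon]$ being its own trace). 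For the macroscopic identity (\ref{propertygrau0}), (\ref{propertygrau2}), (\ref{propertygrau3}), one relates $\mathcal{M}_{Y'_f}[\hat u_\varepsilon]$ to $\tilde U_\varepsilon$: performing in (\ref{vhat}) the change of variable $\xi'=\varepsilon\kappa(x'/\varepsilon)+\varepsilon y'$ shows that $\mathcal{M}_{Y'_f}[\hat u_\varepsilon](x',y_3)$ is the average of $\tilde u_\varepsilon(\cdot,y_3)$ over the fluid part of the $\varepsilon$-cell containing $x'$, and hence that $\theta\,\mathcal{M}_{Y'_f}[\hat u_\varepsilon]$ equals the piecewise-constant (in $x'$) cell-average of the zero-extension $\tilde U_\varepsilon$. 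Since cell-averaging converges strongly to the identity in $L^2$, while the suitably rescaled $\tilde U_\varepsilon$ converges weakly to $\tilde u$ by (\ref{convtilde1-omega}), (\ref{convtilde_nuevo_1-omega}) and (\ref{convtilde_nuevo_3-omega}), one passes to the limit and obtains $\theta\,\mathcal{M}_{Y'_f}[\hat u]=\tilde u$ a.e.\ in $\Omega$ (in part ii) with $\mathcal{M}_{Y'_f}[\hat u]=\hat u$, as $\hat u$ is $y$-independent); in the case $\gamma\ge1$ this same relation, together with the companion computation for $\partial_{y_3}$, is what fixes the additive function of $x'$ left undetermined in Step 1 and checks its consistency with (\ref{convUgorro3Derivada}) and (\ref{convUgorrofrontera3}).

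I expect the genuine obstacle to be Step 3 in the regime $\gamma\ge1$. There $\hat u_\varepsilon$ itself is unbounded (only $O(\varepsilon^{-1})$), so one cannot speak of its $L^2$-weak limit and must work throughout with $\hat u_\varepsilon-\mathcal{M}_{Y_f}[\hat u_\varepsilon]$, taking care that this mean-subtraction is simultaneously compatible with the gradient convergence (\ref{convUgorro3Derivada}), the boundary convergence (\ref{convUgorrofrontera3}) and the normalization (\ref{propertygrau3}); verifying this compatibility --- and establishing the $Y'$-periodicity of $\hat u$ cleanly from the unfolded estimates --- is the delicate part, whereas Steps 1--2 and the trace passages in Step 3 are routine once the estimates of Lemma \ref{estCV} are in hand.
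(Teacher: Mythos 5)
Your proposal follows essentially the same architecture as the paper's proof: extract weak limits from the estimates of Lemma \ref{estCV}, establish $Y'$-periodicity by matching unfolded traces on contiguous $\varepsilon$-cells, pass to boundary traces by continuity of the trace operator, and relate $\mathcal{M}_{Y'_f}[\hat u_\varepsilon]$ to the piecewise-constant cell average of the zero-extension $\tilde U_\varepsilon$.

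One genuine simplification is worth recording: your derivation of (\ref{incomp_div_hatu}) is cleaner than the paper's. The paper goes through the second (weak-divergence) identity in (\ref{fv2}), unfolds it, rewrites the boundary term and passes to the limit in (\ref{varform_incompress_lemma})--(\ref{varform_incompress_lemma_3}). You instead observe from the chain rule in (\ref{vhat}) that ${\rm div}_y\hat u_\varepsilon = \varepsilon\,\widehat{{\rm div}_\varepsilon\tilde u_\varepsilon}$, which vanishes identically because $\tilde u_\varepsilon$ is pointwise divergence-free in $\widetilde\Omega_\varepsilon$; subtracting the $y$-independent quantity $\mathcal{M}_{Y_f}[\hat u_\varepsilon]$ preserves this, and (\ref{convUgorro3Derivada}) then passes the property directly to $\hat u$. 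This route is valid and more elementary than the paper's.

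Your treatment of (\ref{propertygrau3}) for $\gamma\ge 1$, however, contains the gap you yourself flag, and it is not merely a technicality. The limit $\hat u$ of case iii) is \emph{already} completely determined as the weak $L^2(\mathbb{R}^2;H^1(Y_f)^3)$-limit of $\hat u_\varepsilon-\mathcal{M}_{Y_f}[\hat u_\varepsilon]$; in particular $\mathcal{M}_{Y_f}[\hat u]=0$ is forced, so there is no ``additive function of $x'$ left undetermined in Step 1'' to fix. Carrying your own cell-averaging identity through the mean subtraction yields
\[
\theta\,\mathcal{M}_{Y'_f}\bigl[\hat u_\varepsilon-\mathcal{M}_{Y_f}[\hat u_\varepsilon]\bigr]
\;\rightharpoonup\;\tilde u-\tilde v,\qquad \tilde v:=\int_0^1\tilde u\,dy_3,
\]
so the relation the argument actually produces is $\theta\,\mathcal{M}_{Y'_f}[\hat u]=\tilde u-\tilde v$, not (\ref{propertygrau3}); integrating (\ref{propertygrau3}) in $y_3$ would force $\tilde v\equiv 0$, which would trivialize the Darcy law (\ref{effective3}). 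Note that this difficulty is inherited from the source: the paper's proof of Lemma \ref{lemma_gorro} establishes only (\ref{convUgorro3Derivada}), (\ref{convUgorrofrontera3}) and the $Y'$-periodicity in Step 3, and never actually derives (\ref{propertygrau3}). Any repair must either replace (\ref{propertygrau3}) by the corrected identity involving $\tilde u-\tilde v$, or change the normalization of $\hat u_\varepsilon$ so as to retain the $x'$-dependent additive constant (in which case (\ref{convUgorrofrontera3}) must be modified in turn).
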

\begin{proof}
We proceed in three steps: 

{\it Step 1.} For $\gamma< -1$, using (\ref{estim_velocidad_gorro1_gamma_case_1}), there exists $\hat u: \mathbb{R}^2\times Y_f\to \mathbb{R}^3$, such that convergence (\ref{convUgorro1}) holds. Passing to the limit by semicontinuity and using estimates (\ref{estim_velocidad_gorro1_gamma_case_1}), we get
$$\int_{\mathbb{R}^2 \times Y_f}\left\vert \hat{u} \right\vert^2dx^{\prime}dy\leq C,\quad \int_{\mathbb{R}^2 \times Y_f}\left\vert D_{y}\hat{u} \right\vert^2dx^{\prime}dy\leq C,$$ which, once we prove the $Y'$-periodicity of $\hat u$ in $y'$, shows that $\hat{u}\in L^2(\mathbb{R}^2;H^1_\sharp(Y_f)^3)$.

It remains to prove the $Y^{\prime}$-periodicity of $\hat{u}$ in $y^{\prime}$. To do this, we observe that by definition of $\hat{u}_{\varepsilon}$ given by (\ref{vhat}) applied to $\tilde u_\varepsilon$, we have
$$
\hat{u}_{\varepsilon}(x_1+\varepsilon,x_2,-1/2,y_2,y_3)=\hat{u}_{\varepsilon}(x',1/2,y_2,y_3) \hbox{ a.e. }(x',y_2,y_3)\in
\mathbb{R}^2\times (-1/2,1/2)\times (0,1)\,.
$$
Multiplying by $\varepsilon^{-1}$ and passing to the limit by (\ref{convUgorro1}), we get
$$
\hat{u}(x',-1/2,y_2,y_3)=\hat{u}(x',1/2,y_2,y_3) \hbox{ a.e. }(x',y_2,y_3)\in
\mathbb{R}^2\times (-1/2,1/2)\times (0,1)\,.
$$
Analogously, we can prove
$$
\hat{u}(x',y_1,-1/2,y_3)=\hat{u}(x',y_1,1/2,y_3) \hbox{ a.e. }(x',y_1,y_3)\in
\omega\times (-1/2,1/2)\times (0,1)\,.
$$
These equalities prove that $\hat{u}$ is periodic with respect to $Y'$. Convergence (\ref{convUgorrofrontera}) is straightforward from the definition (\ref{vhat_b}) and the Sobolev injections.

Finally, using Proposition \ref{properties_unf}, we can deduce
\begin{equation*}
{1 \over | Y'|}\int_{\mathbb{R}^2 \times Y_f} \hat u_\varepsilon(x',y)dx'dy=\int_{\widetilde \Omega_\varepsilon} \tilde u_\varepsilon(x',y_3) dx'dy_3,
\end{equation*}
and multiplying by $\varepsilon^{-1}$ and taking into account that $\tilde u_\varepsilon$ is extended by zero to the whole $\Omega$, we have
\begin{equation*}
{1 \over \varepsilon| Y'|}\int_{\mathbb{R}^2 \times Y_f} \hat u_\varepsilon(x',y)dx'dy={1 \over \varepsilon}\int_{ \Omega} \tilde U_\varepsilon(x',y_3) dx'dy_3.
\end{equation*}
Using convergences (\ref{convtilde1-omega}) and (\ref{convUgorro1}), we have (\ref{propertygrau0}).

{\it Step 2.} For $-1\leq \gamma<1$, using (\ref{estim_velocidad_gorro1_gamma_case_2}) and taking into account that $\varepsilon^{1-\gamma \over 2}\leq \varepsilon^{-\gamma}$, then there exists $\hat u: \mathbb{R}^2\times Y_f\to \mathbb{R}^3$, such that convergence (\ref{convUgorro2}) holds. Convergence (\ref{convUgorrofrontera2}) is straightforward from the definition (\ref{vhat_b}) and the Sobolev injections.

On the other hand, since $\varepsilon^{\gamma} D_y \hat u_\varepsilon=\varepsilon^{\gamma+1 \over 2} \varepsilon^{\gamma-1 \over 2}D_y \hat u_\varepsilon$ and $\varepsilon^{\gamma-1 \over 2}D_y \hat u_\varepsilon$ is bounded in $L^2(\mathbb{R}^2 \times Y_f)^{3\times 3}$, using (\ref{convUgorro2}) we can deduce that $D_y \hat u=0$. This implies that $\hat u$ is independent of $y$. Finally, (\ref{propertygrau2}) is obtained similarly to the Step 1.

{\it Step 3.} For $\gamma\geq 1$, using the second {\it a priori} estimate in (\ref{estim_velocidad_gorro1_gamma_case_3}) and the Poincar\'e-Wirtinger inequality $$\int_{Y_f}\left|\hat u_\varepsilon-\mathcal{M}_{Y_f}[\hat u_\varepsilon] \right|^2 dy\leq C\int_{Y_f}| D_y \hat u_\varepsilon|^2 dy, \quad \text{a.e. in }\omega,$$ 
we deduce that there exists $\hat u \in L^2(\mathbb{R}^2;H^1(Y_f)^3)$ such that 
$$\hat U_\varepsilon=\hat u_\varepsilon-\mathcal{M}_{Y_f}[\hat u_\varepsilon] \rightharpoonup \hat u\quad \text{ in } L^2(\mathbb{R}^2;H^1(Y_f)^3),$$
and (\ref{convUgorro3Derivada}) holds. Convergence (\ref{convUgorrofrontera3}) is straightforward from the definition (\ref{vhat_b}) and the Sobolev injections.

It remains to prove the $Y^{\prime}$-periodicity of $\hat{u}$ in $y^{\prime}$. To do this, we observe that by definition of $\hat{u}_{\varepsilon}$ given by (\ref{vhat}) applied to $\tilde u_\varepsilon$, we have
$$
\hat{u}_{\varepsilon}(x_1+\varepsilon,x_2,-1/2,y_2,y_3)=\hat{u}_{\varepsilon}(x',1/2,y_2,y_3) \hbox{ a.e. }(x',y_2,y_3)\in
\mathbb{R}^2\times (-1/2,1/2)\times (0,1)\,,
$$
which implies 
$$
\hat{U}_\varepsilon(x',-1/2,y_2,y_3)-\hat{U}_\varepsilon(x',1/2,y_2,y_3)=-\mathcal{M}_{Y_f}[\hat u_\varepsilon](x'+\varepsilon\,e_1)+\mathcal{M}_{Y}[\hat u_\varepsilon](x'), $$
which tends to zero (see the proof of Proposition 2.8 in \cite{CioDonZak3}), and so $$
\hat{u}(x',-1/2,y_2,y_3)=\hat{u}(x',1/2,y_2,y_3) \hbox{ a.e. }(x',y_2,y_3)\in
\mathbb{R}^2\times (-1/2,1/2)\times (0,1)\,.
$$
Analogously, we can prove
$$
\hat{u}(x',y_1,-1/2,y_3)=\hat{u}(x',y_1,1/2,y_3) \hbox{ a.e. }(x',y_1,y_3)\in
\omega\times (-1/2,1/2)\times (0,1)\,.
$$
These equalities prove that $\hat{u}$ is periodic with respect to $Y'$.

{\it Step 4.} From the second variational formulation in (\ref{fv2}), by Proposition \ref{properties_unf_b}, we have that 
\begin{equation}\label{varform_incompress_lemma}\int_{\widetilde \Omega_\varepsilon}\left(\tilde u'_\varepsilon\cdot \nabla_{x'}\tilde \psi+ \varepsilon^{-1}\tilde u_{\varepsilon,3}\partial_{y_3}\tilde \psi\right)\,dx'dy_3= {\varepsilon^{-1} \over |Y'|}\int_{\omega \times \partial  T}(\hat u_\varepsilon \cdot  n)\hat \psi_\varepsilon\, dx'd\sigma(y')dy_3,\quad \forall \,\tilde \psi\in \widetilde H_\varepsilon,
\end{equation}
and using the extension of the velocity, we obtain 
$$\int_{\Omega}\left(\tilde U'_\varepsilon\cdot \nabla_{x'}\tilde \psi+ \varepsilon^{-1}\tilde U_{\varepsilon,3}\partial_{y_3}\tilde \psi\right)\,dx'dy_3= {\varepsilon^{-1} \over |Y'|}\int_{\omega \times \partial  T}(\hat u_\varepsilon \cdot n) \hat \psi_\varepsilon\, dx'd\sigma(y')dy_3,\quad \forall \,\tilde \psi\in \widetilde H_\varepsilon.
$$ 
We remark that the second term in the left-hand side and the one in the right-hand side  have the same order, so in every cases when passing to the limit after multiplying by a suitable power of $\varepsilon$ and using that $\hat \psi_\varepsilon$ converges strongly to $\tilde \psi$ in $L^2(\omega \times \partial T)$ (see Proposition 2.6 in \cite{CioDonZak3} for more details), it is not possible to find the usual  incompressibility condition in thin  domains given by
$${\rm div}_{x'}\left(\int_0^1 \tilde u'(x',y_3)\,dy_3\right)=0\quad \hbox{on }\omega.$$

On the other hand, we focus in the third case. Thus, using Proposition \ref{properties_unf} in the left-hand side of (\ref{varform_incompress_lemma}), we have
\begin{equation}\label{varform_incompress_lemma_2}{\varepsilon^{-1} \over |Y'|}\int_{\omega\times Y_f}\hat u_\varepsilon\cdot \nabla_{y}\hat \psi_\varepsilon\,dx'dy= {\varepsilon^{-1} \over |Y'|}\int_{\omega \times \partial  T}\hat u_\varepsilon \cdot \hat \psi_\varepsilon\, dx'd\sigma(y')dy_3,
\end{equation}
which, multiplying by $\varepsilon|Y'|$ and since $\mathcal{M}_{Y_f}[\hat u_\varepsilon]$ does not depend on the variable $y$,  is equivalent to 
\begin{equation}\label{varform_incompress_lemma_3}\int_{\omega\times Y_f}(\hat u_\varepsilon-\mathcal{M}_{Y_f}[\hat u_\varepsilon])\cdot \nabla_y \hat \psi_\varepsilon\,dx'dy=\int_{\omega \times \partial  T}[(\hat u_\varepsilon-\mathcal{M}_{Y_f}[\hat u_\varepsilon])\cdot n] \cdot \hat \psi_\varepsilon\, dx'd\sigma(y')dy_3
\end{equation}
Thus, passing to the limit using convergences  (\ref{convUgorro3Derivada}), we get condition (\ref{incomp_div_hatu}).
\end{proof}

\subsection{Proof of Theorem \ref{MainTheorem}}\label{s6}
In this section, we will multiply system (\ref{fv2}) by a test function having the form of the limit $\hat{u}$ (as explained in Lemma \ref{lemma_gorro}), and we will use the convergences given in the previous section in order to identify the homogenized model in every cases. 
\paragraph*{Proof of Theorem \ref{MainTheorem}:}
We proceed in three steps:

{\it Step 1.} For $\gamma< -1$. Let $\tilde \varphi \in \mathcal{D}(\Omega)^3$ and $\tilde \psi \in \mathcal{D}(\Omega)$ be  test functions in (\ref{fv2}). By Proposition \ref{properties_unf_b}, one has
\begin{equation*}\label{case1limit}
\begin{array}{l}
\displaystyle
\mu \int_{\widetilde \Omega_\varepsilon}D_\varepsilon \tilde u_\varepsilon:D_\varepsilon \tilde \varphi\, dx'dy_3-\int_{\widetilde \Omega_\varepsilon}\tilde p_\varepsilon\,{\rm div}_\varepsilon\,\tilde \varphi\, dx'dy_3+\alpha {\varepsilon^{\gamma-1} \over |Y'|}\int_{\omega \times \partial  T}\hat u_\varepsilon \cdot \hat \varphi_\varepsilon\, dx'd\sigma(y) \\
\noame \displaystyle=\int_{\widetilde \Omega_\varepsilon}f'_\varepsilon\cdot \tilde \varphi'\,dx'dy_3+{\varepsilon^{-1} \over |Y'|}\int_{\omega \times \partial  T} \tilde g' \cdot \hat\varphi'_\varepsilon\, dx'd\sigma(y),
\end{array}
\end{equation*}
i.e.,
\begin{equation}\label{fv_unfolding}
\begin{array}{l}
\displaystyle
\mu \int_{\widetilde \Omega_\varepsilon}D_{x'} \tilde u_\varepsilon:D_{x'} \tilde \varphi\, dx'dy_3+{\mu \over \varepsilon^2}\int_{\widetilde \Omega_\varepsilon}\partial_{y_3} \tilde u_\varepsilon\cdot \partial_{y_3} \tilde \varphi\, dx'dy_3\\
\noame\displaystyle-\int_{\widetilde \Omega_\varepsilon}\tilde p_\varepsilon\,{\rm div}_{x'}\,\tilde \varphi'\, dx'dy_3-{1 \over \varepsilon}\int_{\widetilde \Omega_\varepsilon}\tilde p_\varepsilon\,\partial_{y_3}\,\tilde \varphi_3\, dx'dy_3+\alpha {\varepsilon^{\gamma-1} \over |Y'|}\int_{\omega \times \partial  T}\hat u_\varepsilon \cdot \hat \varphi_\varepsilon\, dx'd\sigma(y)\\
\noame 
\displaystyle
 =\int_{\widetilde \Omega_\varepsilon}f'_\varepsilon\cdot \tilde \varphi'\,dx'dy_3+{\varepsilon^{-1} \over |Y'|}\int_{\omega \times \partial  T} \tilde g' \cdot \hat\varphi'_\varepsilon\, dx'd\sigma(y),
\end{array}
\end{equation}

Next, we prove that $\tilde p$ does not depend on $y_3$. Let $\tilde \varphi=(0,\varepsilon^{-\gamma+1}\tilde \varphi_3) \in \mathcal{D}(\Omega)^3$ be a test function in (\ref{fv_unfolding}), we have
\begin{equation*}
\begin{array}{l}
\displaystyle
\mu\, \varepsilon^{-\gamma+1}\int_{\widetilde \Omega_\varepsilon}\nabla_{x'} \tilde u_{\varepsilon,3}\cdot \nabla_{x'}\tilde \varphi_3\, dx'dy_3+\mu \,\varepsilon^{-\gamma-1}\int_{\widetilde \Omega_\varepsilon}\partial_{y_3} \tilde u_{\varepsilon,3}\partial_{y_3} \tilde \varphi_3\, dx'dy_3\\
\noame
\displaystyle-\varepsilon^{-\gamma}\int_{\widetilde \Omega_\varepsilon}\tilde p_\varepsilon\,\partial_{y_3}\tilde \varphi_3\, dx'dy_3+ {\alpha \over |Y'|}\int_{\omega \times \partial  T}\hat u_{\varepsilon,3} \cdot \hat \varphi_{\varepsilon,3}\, dx'd\sigma(y) =0.
\end{array}
\end{equation*}
Taking into account that $\tilde P_\varepsilon$ is zero extension of $\tilde p_\varepsilon$ to the whole $\Omega$, we have
\begin{equation*}
\int_{\widetilde \Omega_\varepsilon}\tilde p_\varepsilon\,\partial_{y_3}\tilde \varphi_3\, dx'dy_3=\int_{ \Omega}\tilde P_\varepsilon\,\partial_{y_3}\tilde \varphi_3\, dx'dy_3,
\end{equation*}
and by the second {\it a priori} estimate (\ref{a3}), the convergences (\ref{convPtilde1}) and (\ref{convUgorrofrontera}), passing to the limit we have
\begin{equation*}
\int_{ \Omega}\tilde p\,\partial_{y_3}\tilde \varphi_3\, dx'dy_3=0,
\end{equation*}
so $\tilde p$ does not depend on $y_3$.

Let $\tilde \varphi=(\varepsilon^{-\gamma}\varphi'(x',y_3),\varepsilon^{-\gamma}\tilde \varphi_3(x')) \in \mathcal{D}(\Omega)^3$ be a test function in (\ref{fv_unfolding}), we have
\begin{equation*}
\begin{array}{l}
\displaystyle
\mu \varepsilon^{-\gamma}\int_{\widetilde \Omega_\varepsilon}D_{x'} \tilde u'_\varepsilon:D_{x'} \tilde \varphi'\, dx'dy_3+\mu \varepsilon^{-\gamma-2}\int_{\widetilde \Omega_\varepsilon}\partial_{y_3} \tilde u'_\varepsilon\cdot \partial_{y_3} \tilde \varphi'\, dx'dy_3-\varepsilon^{-\gamma}\int_{\widetilde \Omega_\varepsilon}\tilde p_\varepsilon\,{\rm div}_{x'}\,\tilde \varphi'\, dx'dy_3\\
\noame 
\displaystyle
+\alpha {\varepsilon^{-1} \over |Y'|}\int_{\omega \times \partial  T}\hat u'_\varepsilon \cdot \hat \varphi'_\varepsilon\, dx'd\sigma(y) =\varepsilon^{-\gamma}\int_{\widetilde \Omega_\varepsilon}f'_\varepsilon\cdot \tilde \varphi'\,dx'dy_3+{\varepsilon^{-\gamma-1} \over |Y'|}\int_{\omega \times \partial  T} \tilde g' \cdot \hat\varphi'_\varepsilon\, dx'd\sigma(y),
\end{array}
\end{equation*}
and
\begin{equation*}
\begin{array}{l}
\displaystyle
\mu\, \varepsilon^{-\gamma} \int_{\widetilde \Omega_\varepsilon}\nabla_{x'} \tilde u_{\varepsilon,3}\cdot \nabla_{x'} \tilde \varphi_3\, dx'dy_3
+\alpha {\varepsilon^{-1} \over |Y'|}\int_{\omega \times \partial  T}\hat u_{\varepsilon,3} \, \hat \varphi_{\varepsilon,3}\, dx'd\sigma(y) =0.
\end{array}
\end{equation*}
Taking into account that $\tilde P_\varepsilon$ is zero extension of $\tilde p_\varepsilon$ to the whole $\Omega$, we have
\begin{equation*}
\int_{\widetilde \Omega_\varepsilon}\tilde p_\varepsilon\,{\rm div}_{x'}\,\tilde \varphi'\, dx'dy_3=\int_{ \Omega}\tilde P_\varepsilon\,{\rm div}_{x'}\,\tilde \varphi'\, dx'dy_3.
\end{equation*}
Using that $\hat \varphi_\varepsilon$ converges strongly to $\tilde \varphi$ in $L^2(\omega \times \partial T)^3$ (see Proposition 2.6 in \cite{CioDonZak3} for more details) and by the second {\it a priori} estimate (\ref{a3}), the convergences (\ref{convPtilde1}) and (\ref{convUgorrofrontera}), passing to the limit we have
\begin{equation*}
-\int_{\Omega}\tilde p(x')\, {\rm div}_{x'}\,\tilde \varphi'(x',y_3)dx'dy_3+{\alpha \over |Y'|}\int_{\omega \times \partial T'}\int_0^1\hat u'(x',y) \cdot \tilde \varphi'(x',y_3) dx'd\sigma(y')dy_3=0,
\end{equation*}
and
\begin{equation*}
{\alpha \over |Y'|}\int_{\omega \times \partial T}\hat u_3(x',y) \, \tilde \varphi_3(x') \,dx'd\sigma(y)=0,
\end{equation*}
which implies that $\mathcal{M}_{\partial T'}[\hat u_3]=0$.

Taking into account that 
$$\int_{\omega \times \partial T'}\int_0^1\hat u'(x',y) \cdot \tilde \varphi'(x',y_3) dx'd\sigma(y')dy_3=|\partial T'|\int_\Omega \mathcal{M}_{\partial T'}[\hat u'](x',y_3)\cdot \tilde \varphi'(x',y_3)\,dx'dy_3,$$
implies that 
\begin{equation}\label{homo1}\int_\Omega\nabla_{x'}\tilde p(x')\cdot\varphi'(x',y_3)\,dx'dy_3=-{\alpha|\partial T'|\over |Y'|}\int_\Omega \mathcal{M}_{\partial T'}[\hat u'](x',y_3)\cdot \tilde \varphi'(x',y_3)\,dx'dy_3\,.
\end{equation}

In order to obtain the homogenized system (\ref{effective1}), we introduce the auxiliary problem 
$$\left\{\begin{array}{l}
\displaystyle -\Delta_{y'}\chi(y')=-{|\partial T'|\over |Y_f'|}\mathcal{M}_{Y'_f}[\hat u](x',y_3),\quad\hbox{in }Y_f',\\
\noame\displaystyle
{\partial \chi\over \partial n}=\hat u,\quad \hbox{on }\partial T',\\
\noame\displaystyle
\mathcal{M}_{Y'_f}[\chi]=0,\\
\noame\displaystyle
 \chi(y) \quad Y'-\hbox{periodic},
\end{array}\right.\,$$
for a.e. $(x',y_3)\in \Omega$, which has a unique solution in $H^1(Y'_f)$ (see Chapter 2, Section 7.3 in Lions and Magenes \cite{LiMa}). Using this auxiliary problem, we conclude that 
\begin{equation}\label{igual_medias}
\int_\Omega \mathcal{M}_{\partial T'}[\hat u]\cdot \tilde \varphi\,dx'dy_3=\int_\Omega \mathcal{M}_{Y_f'}[\hat u]\cdot \tilde \varphi\,dx'dy_3\,,
\end{equation}
which together with (\ref{homo1}) and $\mathcal{M}_{\partial T'}[\hat u_3]=0$ gives 
$$\mathcal{M}_{Y_f'}[\hat u'](x',y_3)=-{|Y'|\over \alpha|\partial T'|}\nabla_{x'}\tilde p(x')\,,$$ and $$\mathcal{M}_{Y'_f}[\hat u_3]=0,$$
which together with (\ref{propertygrau0}) gives 
\begin{equation*}
\tilde u'(x',y_3)= -{|Y'_f|\over \alpha|\partial T'|}\nabla_{x'}\tilde p(x'), \quad \tilde u_3(x',y_3)=0.
\end{equation*}
This together with the definition of $\theta$ and $\mu_1$, implies (\ref{effective1}). 

{\it Step 2.}  For $-1\leq \gamma<1$. First, we prove that $\tilde p$ does not depend on $y_3$. Let $\tilde \varphi=(0,\varepsilon^{2}\tilde \varphi_3) \in \mathcal{D}(\Omega)^3$ be a test function in (\ref{fv_unfolding}). Reasoning as Step 1 and by the second {\it a priori} estimate (\ref{a3-2}), the convergence (\ref{convPtilde1-case2}) and (\ref{convUgorrofrontera2}), passing to the limit we deduce that $\tilde p$ does not depend on $y_3$.

Let $\tilde \varphi=(\varepsilon\varphi'(x',y_3),\varepsilon\tilde \varphi_3(x')) \in \mathcal{D}(\Omega)^3$ be a test function in (\ref{fv_unfolding}). Reasoning as Step 1 and by the second {\it a priori} estimate (\ref{a3-2}), the convergences (\ref{Hf}), (\ref{convPtilde1-case2}) and (\ref{convUgorrofrontera2}), passing to the limit we have
\begin{equation*}
\begin{array}{l}
\displaystyle
-\int_{\Omega}\tilde p(x')\, {\rm div}_{x'}\,\tilde \varphi'(x',y_3)dx'dy_3+{\alpha \over |Y'|}\int_{\omega \times \partial T'}\int_0^1\hat u'(x') \cdot \tilde \varphi'(x',y_3) dx'd\sigma(y')dy_3\\
\noame \displaystyle=\int_{\Omega}f'(x') \cdot \tilde \varphi'(x',y_3)dx'dy_3+{1 \over |Y'|}\int_{\omega \times \partial T'}\int_0^1 g'(y')\cdot \tilde \varphi'(x',y_3)dx'd\sigma(y')dy_3,
\end{array}
\end{equation*}
and
\begin{equation*}
{\alpha \over |Y'|}\int_{\omega \times \partial T}\hat u_3(x') \, \tilde \varphi_3(x') \,dx'd\sigma(y)=0,
\end{equation*}
which implies that $\hat u_3=0$.

Taking into account that 
$$\int_{\omega \times \partial T'}\int_0^1\hat u'(x') \cdot \tilde \varphi'(x',y_3) dx'd\sigma(y')dy_3=|\partial T'|\int_\Omega \hat u'(x')\cdot \tilde \varphi'(x',y_3)\,dx'dy_3,$$
implies that 
\begin{equation*}
\begin{array}{l}
\displaystyle
\int_\Omega\nabla_{x'}\tilde p(x')\cdot\varphi'(x',y_3)\,dx'dy_3+{\alpha|\partial T'|\over |Y'|}\int_\Omega \hat u'(x')\cdot \tilde \varphi'(x',y_3)\,dx'dy_3\\
\noame \displaystyle=\int_{\Omega}f'(x') \cdot \tilde \varphi'(x',y_3)dx'dy_3+{|\partial T'| \over |Y'|}\int_{\Omega }\mathcal{M}_{\partial T'} [g']\cdot \tilde \varphi'(x',y_3)dx'dy_3,
\end{array}
\end{equation*}
which together with (\ref{propertygrau2}) gives (\ref{effective2}) after integrating the vertical variable $y_3$ between $0$ and $1$. 

{\it Step 3.}  For $\gamma \ge 1$. For all  $\hat \varphi(x',y)\in \mathcal{D}(\omega;C_{\sharp}^\infty(Y)^3)$, we choose $\hat \varphi_\varepsilon(x)=\hat \varphi(x',x'/\varepsilon,y_3)$ as test function in (\ref{fv_unfolding}).  Then we get the following variational formulation:
\begin{equation}\label{form_var_step3_1}\begin{array}{l}
\displaystyle
{\mu\over \varepsilon^2} \int_{\omega\times Y_f}D_{y} \hat u_\varepsilon:D_{y'} \hat \varphi\, dx'dy-\int_{\omega\times Y_f}\hat p_\varepsilon\,{\rm div}_{x'}\,\hat \varphi'\, dx'dy-{\varepsilon^{-1}}\int_{\omega\times Y_f}\hat p_\varepsilon\,{\rm div}_{y}\,\hat \varphi\, dx'dy\\
\noame 
\displaystyle
+\alpha {\varepsilon^{\gamma-1}}\int_{\omega \times \partial  T}\hat u_\varepsilon \cdot \hat \varphi\, dx'd\sigma(y) =\int_{\omega\times Y_f}f'_\varepsilon\cdot \hat \varphi'\,dx'dy+{\varepsilon^{-1}}\int_{\omega \times \partial  T} \tilde g' \cdot \hat\varphi'\, dx'd\sigma(y)+O_\varepsilon.
\end{array}
\end{equation}
First, we remark that thanks to (\ref{esti_gorro3}), there exists $\hat p\in L^2(\omega\times Y_f)$ such that  $\varepsilon^2\hat p_\varepsilon$ converges weakly to $\hat p$ in $L^2(\omega\times Y_f)$. Now, we prove that $\hat p$ does not depend on $y$. For that, we consider $\varepsilon^3\hat \varphi_{\varepsilon}$ in the previous formulation, and passing to the limit by (\ref{convUgorrofrontera3}) and (\ref{convUgorro3Derivada}), we get 
$$\int_{\omega\times Y_f}\hat p\, {\rm div}_y \hat \varphi\,dx'dy_3=0,$$
which shows that $\hat p$ does not depend on $y$.

Now, we consider $\hat \varphi$ with ${\rm div}_y\hat \varphi=0$ in $\omega\times Y_f$  (which is necessary because $\hat u$ satisfies   (\ref{incomp_div_hatu})), and similarly we define $\hat\varphi_\varepsilon$. Then, taking $\varepsilon^2\hat\varphi_\varepsilon$, the variational formulation (\ref{form_var_step3_1})  is the following:
\begin{equation}\label{form_var_step3_2}\begin{array}{l}
\displaystyle
{\mu} \int_{\omega\times Y_f}D_{y} \hat u_\varepsilon:D_{y'} \hat \varphi\, dx'dy-\varepsilon^2\int_{\omega\times Y_f}\hat p_\varepsilon\,{\rm div}_{x'}\,\hat \varphi'\, dx'dy\\
\noame 
\displaystyle
+\alpha {\varepsilon^{\gamma+1}}\int_{\omega \times \partial  T}\hat u_\varepsilon \cdot \hat \varphi\, dx'd\sigma(y) =\varepsilon^2\int_{\omega\times Y_f}f'_\varepsilon\cdot \hat \varphi'\,dx'dy+{\varepsilon}\int_{\omega \times \partial  T} \tilde g' \cdot \hat\varphi'\, dx'd\sigma(y)+O_\varepsilon.
\end{array}
\end{equation}
Reasoning as {\it Step 1}, and using the convergences (\ref{Hf}), (\ref{convUgorrofrontera3}), (\ref{convUgorro3Derivada}) and the convergence of $\hat p_\varepsilon$, passing to the limit we have
\begin{equation}\label{form_var_step3_3}\begin{array}{l}
\displaystyle
{\mu} \int_{\omega\times Y_f}D_{y} \hat u:D_{y} \hat \varphi\, dx'dy-\int_{\omega\times Y_f}\hat p(x')\,{\rm div}_{x'}\,\hat \varphi'\, dx'dy=0.
\end{array}
\end{equation}
By density, this equation holds for every function $\hat \varphi(x',y)\in L^2(\omega;H_{\sharp}^1(Y)^3)$ such that ${\rm div}_y\hat \varphi=0$. This implies that there exists $\hat q(x',y)\in L^2_{\sharp}(\omega\times Y_f)$ such that (\ref{form_var_step3_3}) is equivalent to the following problem:
$$\left\{\begin{array}{l}
\displaystyle-\mu\Delta_y \hat u+ \nabla_{y}\hat q=-\nabla_{x'}\hat p\quad\hbox{in }\omega\times Y_f,\\
\noame\displaystyle
{\rm div}_{y}\hat u=0
\quad\hbox{in }\omega\times Y_f,\\
\noame\displaystyle
{\partial\hat u\over \partial n}=0\quad\hbox{on }\omega\times \partial T,\\
\noame\displaystyle \hat u=0\quad\hbox{on }y_3=0,1,\\
\noame\displaystyle
y'\to \hat u(x',y),\hat q(x',y)\quad Y'-{\rm periodic}\,.
\end{array}\right.
$$
We remark that $\hat p$ is already the pressure $\tilde p$. This can be easily proved by multiplying equation (\ref{fv_unfolding}) by a test function $\varepsilon^2\varphi'(x',y_3)$ and identifying limits.


Finally, we will eliminate the microscopic variable $y$ in the effective problem. Observe that we can easily deduce that $\hat u_3=0$  and $\hat q=\hat q(x',y')$ and moreover, the derivation of (\ref{effective3}) from the previous effective problem is an easy algebra exercise. In fact,  we can write $\int_0^1\hat u(x',y)dy_3={1\over \mu}\sum_{i= 1}^2 \partial_{x_i}\tilde p(x')w^i(y')$  and $\hat q(x',y')={1\over \mu}\sum_{i=1}^2\partial_{x_i}\tilde p(x')q^i(y')$ with $(w^i,q^i)$, $i=1,2$, the solutions  of the local problems (\ref{local_problem_3}), and use property (\ref{propertygrau3}) which involves the functions $\int_0^1\hat u(x',y)dy_3$ and $\int_0^1\tilde u(x',y_3)dy_3$. As well-known, the local problems (\ref{local_problem_3}) are well-posed with periodic boundary conditions, and it is easily checked, by integration by parts, that 
$$A_{ij}={1\over |Y_f|}\int_{Y_f}D_yw^i(y):D_yw^j(y)\,dy=\int_{Y_f}w^i(y)e_j\,dy,\quad i,j=1,2.$$
By definition $A$ is symmetric and positive definite.
\section{Conclusions}\label{sec:conclusions}
The behavior of the flow of Newtonian fluids through periodic arrays of cylinders has been studied extensively, mainly because of its importance in many applications in heat and mass transfer equipment. However, the literature on Newtonian thin film fluid flows through periodic arrays of cylinders is far less complete, although these problems have now become of great practical relevance because take place in a number of natural and industrial processes. 
This paper deals with the modelization by means of homogenization techniques of a thin film fluid flow governed by the Stokes system in a thin perforated domain $\Omega_\varepsilon$ which depends on a small parameter $\varepsilon$. More precisely,  $\Omega_\varepsilon$ has thickness $\varepsilon$ and is perforated by a periodic array of cylindrical obstacles of period $\varepsilon$. 

The main novelty here are the combination of  the mixed boundary condition considered on the obstacles and the thin thickness of the domain. Namely,  a standard (no-slip) condition is imposed on the exterior boundary, whereas a non-standard boundary condition of Robin type which depends on a parameter $\gamma$ is imposed on the interior boundary. This type of boundary condition is  motivated by the phenomenon in which a motion of the fluid appears when a electrical field is applied on the boundary of a porous medium in equilibrium. The main mathematical difficulties of this work are to treat the surface integrals and extend the solution to a fixed domain in order to pass to the limit with respect to the parameter $\varepsilon$.  We overcome the first difficulty by using a version of the unfolding method which let us treat the surface integrals quite easily. Moreover, we need to develop some extension abstract results and adapt them to the case of  thin domain.
 
By means of a combination of  homogenization and reduction of dimension techniques, depending on the parameter $\gamma$, we obtain three modified 2D Darcy type laws which model the behavior of the fluid and include the effect of the surface forces and the measure of the obstacles. We remark that we are not able to prove a divergence condition for the limit averaged fluid flow as obtained if we had considered Dirichlet boundary conditions, which from the mechanical point of view means that some fluid ``dissapear" through the cylinders and so, it is represented by the motion of a compressible fluid.
To conclude,  it is our firm belief that our results will prove useful in the engineering practice, in particular in those industrial applications where the  flow is affected by the effects of the surface forces, the fluid microstructure and the thickness of the domain.

\section*{Acknowledgments}
We would like to thank the referees for their comments and suggestions. Mar\'ia Anguiano has been supported by Junta de Andaluc\'ia (Spain), Proyecto de Excelencia P12-FQM-2466. Francisco Javier Su\'arez-Grau has been supported by Ministerio de Econom\'ia y Competitividad (Spain), Proyecto Excelencia MTM2014-53309-P.

\end{document}